\newtheorem{Theorem}[equation]{Theorem}
\newtheorem{Corollary}[equation]{Corollary}
\newtheorem{Lemma}[equation]{Lemma}
\theoremstyle{definition}
\newtheorem{Definition}[equation]{Definition}
\newtheorem{Conjecture}[equation]{Conjecture}
\theoremstyle{remark}
\numberwithin{equation}{section}
\DeclareMathOperator{\ad}{ad}
\title{New presentation of the twisted Yangian of type $D$}
\author{Shuichi Harako\thanks{Graduate school of Mathematical Sciences, the University of Tokyo, harako@ms.u-tokyo.ac.jp} and Mamoru Ueda\thanks{Graduate school of Mathematical Sciences, the University of Tokyo, mueda@ms.u-tokyo.ac.jp}}
\date{}
\begin{document}

\maketitle

\begin{abstract}
We construct a new presentation of the twisted Yangian of type $D$ with a finite number of generators. By using this presentation, we propose a new definition of the twisted affine Yangian of type $D$.
\end{abstract}
\section{Introduction}
Drinfeld (\cite{D1}, \cite{D2}) introduced the Yangian $Y_h(\mathfrak{g})$ associated with a finite dimensional simple Lie algebra $\mathfrak{g}$. The Yangian $Y_h(\mathfrak{g})$ is a quantum group which is a deformation of a current algebra $\mathfrak{g}\otimes\mathbb{C}[z]$. The Yangian has several presentations including the RTT presentation, the Drinfeld $J$ presentation, the Drinfeld presentation. By using the Drinfeld presentation, the definition of the Yangian can be extended to a symmetrizable Kac-Moody Lie algebra. For an affine Lie algebra, Guay-Nakajima-Wendlandt \cite{GNW} constructed a finite presentation of the Yangian and showed that the Yangian associated with an affine Lie algebra has a coproduct. Recently, relationships between Yangians and $W$-algebras have been actively studied. In finite setting, there exists a relationship between the Yangian of type $A$ and a finite $W$-algebra of type $A$ (see Ragoucy-Sorba \cite{RS} and Brundan-Kleshchev \cite{BK}). As for other types, a finite $W$-algebra is connected to the twisted Yangian (see \cite{Bro}, \cite{DKV} and \cite{LPTTW}). 

Olshanskii (\cite{O}) introduced the twisted Yangian associated with an orthogonal Lie algebra $\mathfrak{so}(n)$ or a symplectic Lie algebra $\mathfrak{sp}(\dfrac{n}{2})$. Similarly to the finite Yangian $Y_\hbar(\mathfrak{sl}(n))$, the twisted Yangian also has several presentations. Olshanskii defined the twisted Yangian by the RTT presentation. Belliard and Regelskis (\cite{BR1}) constructed the Drinfeld $J$ presentation of the twisted Yangian. Lu-Wang-Zhang \cite{LWZ0}, \cite{LWZ} and Lu-Zhang \cite{LWZ1} gave the Drinfeld presentation of the twisted Yangian.
In this article, we give a new presentation of the twisted Yangian of type $D$, which corresponds to the presentation given in \cite{GNW}. 

In this article, we only consider the twisted Yangian associated with $\mathfrak{so}(2n)$. A Lie algebra $\mathfrak{so}(2n)$ can be regarded as a fixed point subalgebra in $\mathfrak{sl}(2n)$ of an appropriate involution of $\mathfrak{sl}(2n)$. We can choose this involution by two ways. The first one is the transpose of an $2n\times 2n$-matrix. The another one is given by
\begin{align*}
E_{i,j}\mapsto -E_{j+n,i+n},\ E_{i,j+n}\mapsto -E_{j,i+n},\ E_{i+n,j}\mapsto -E_{j+n,i}\text{ for }1\leq i,j\leq n,
\end{align*}
where $E_{i,j}$ is a matrix unit whose $(i,j)$-component is $1$ and other components are $0$. Lu-Wang-Zhang \cite{LWZ0} gave the Drinfeld presentation by using the first involution. In this article, we use the latter involution and denote it by $\tau$. 

In Section 3, we give a presentation of the universal enveloping algebra of a twisted current Lie algebra $\mathfrak{sl}(2n)[z]^{\widetilde{\tau}}=\{x\in\mathfrak{sl}(2n)[u]\mid\widetilde{\tau}(x)=x\}$, where $\widetilde{\tau}$ is an involution of $\mathfrak{sl}(2n)[z]$ defined by $\widetilde{\tau}(xz^r)=(-1)^r\tau(x)z^r$. The set of generators of this presentation is
\begin{align*}
\{h_iz^{2r},x^\pm_iz^{2r},\bar{h}_jz^{2r+1},\bar{x}^\pm_kz^{2r+1}\mid 1\leq i\leq n,1\leq j\leq n-1,1\leq k\leq n+1,r\geq0\},
\end{align*}
where
\begin{gather*}
h_i=\begin{cases}
E_{i,i}-E_{i+1,i+1}-E_{n+i,n+i}+E_{n+i+1,n+i+1}&\text{ if }1\leq i\leq n-1,\\
E_{n-1,n-1}+E_{n,n}-E_{2n-1,2n-1}-E_{2n,2n}&\text{ if }i=n,
\end{cases}\\
x^+_i=\begin{cases}
E_{i,i+1}-E_{n+i+1,n+i}&\text{ if }1\leq i\leq n-1,\\
E_{n-1,2n}-E_{n,2n-1}&\text{ if }i=n,
\end{cases}\ 
x^-_i=\begin{cases}
E_{i+1,i}-E_{n+i,n+i+1}&\text{ if }1\leq i\leq n-1,\\
E_{2n,n-1}-E_{2n-1,n}&\text{ if }i=n,
\end{cases}\\
\bar{h}_i=
E_{i,i}-E_{i+1,i+1}+E_{n+i,n+i}-E_{n+i+1,n+i+1}\text{ if }1\leq i\leq n-1,\\
\bar{x}^+_i=
\begin{cases}
E_{i,i+1}+E_{n+i+1,n+i}&\text{ if }1\leq i\leq n-1,\\
E_{n-1,2n}+E_{n,2n-1}&\text{ if }i=n,\\
E_{n,2n}&\text{ if }i=n+1,
\end{cases}\ 
\bar{x}^-_i=
\begin{cases}
E_{i+1,i}+E_{n+i,n+i+1}&\text{ if }1\leq i\leq n-1,\\
E_{2n,n-1}+E_{2n-1,n}&\text{ if }i=n,\\
E_{2n,n}&\text{ if }i=n+1.
\end{cases}
\end{gather*}

In Section 4, we construct a presentation of the twisted current Lie algebra $U(\mathfrak{sl}(2n)[z]^{\widetilde{\tau}})$ with the generators $\{h_i,x^\pm_i,\bar{h}_jz,\bar{x}^\pm_iz\mid 1\leq i\leq n,1\leq j\leq n-1\}$. In Section 6, by using the presentation given in Section~4, we define an associative algebra $TY_\hbar(\mathfrak{so}(2n))$ as a deformation of $U(\mathfrak{sl}(2n)[z]^{\widetilde{\tau}})$. 

\begin{Theorem}
The associative algebra $TY_\hbar(\mathfrak{so}(2n))$ is isomorphic to the twisted Yangian associated with $\mathfrak{so}(2n)$.
\end{Theorem}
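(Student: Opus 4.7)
The plan is to construct an explicit homomorphism
\[
\Phi\colon TY_\hbar(\mathfrak{so}(2n)) \longrightarrow Y^{\mathrm{tw}}_\hbar(\mathfrak{so}(2n))
\]
into an existing presentation of the twisted Yangian — the Drinfeld presentation of Lu--Wang--Zhang \cite{LWZ0,LWZ,LWZ1} is the most convenient target — and then prove that $\Phi$ is bijective. This mirrors the strategy of Guay--Nakajima--Wendlandt \cite{GNW} in the affine Yangian setting.

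First I would identify images of the finite generating set $\{h_i, x^\pm_i, \bar h_j z, \bar x^\pm_k z\}$ inside the Drinfeld presentation. The $h_i, x^\pm_i$ are sent to the level-zero generators coming from the embedding of $U(\mathfrak{so}(2n))$, while $\bar h_j z$ and $\bar x^\pm_k z$ are sent to explicit linear combinations of level-one Drinfeld generators (those of the form $h_{j,1}, x^\pm_{k,1}$, suitably symmetrized so that they lie in the $\widetilde\tau$-fixed subspace). The bulk of this step is verifying that all defining relations of $TY_\hbar(\mathfrak{so}(2n))$ listed in Section~6 hold in $Y^{\mathrm{tw}}_\hbar(\mathfrak{so}(2n))$ after this substitution; most of this reduces to relations already established in \cite{LWZ0,LWZ,LWZ1}, modulo parity-tracking coming from $\widetilde\tau(xz^r)=(-1)^r\tau(x)z^r$.

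Second, to prove surjectivity, I would show that the level-$r$ Drinfeld generators $x^\pm_{i,r}, h_{i,r}$ for every $r\geq 0$ lie in the image of $\Phi$. This is done by induction on $r$: starting from level zero and level one (which are literally in the image by construction), the higher levels are generated by iterated commutators with $\bar h_j z$ and $\bar x^\pm_k z$. The parity distinction in the generating set of Section~3 is exactly what makes this induction go through, because multiplying by a level-one element shifts the $z$-degree by one and toggles the $\widetilde\tau$-parity.

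The main obstacle will be injectivity. My approach is to place a filtration on $TY_\hbar(\mathfrak{so}(2n))$ by the degree in $z$, check that this filtration is compatible with the presentation of Section~6, and identify the associated graded algebra. Using the classical presentation established in Sections~3 and~4, one gets a surjection $\on{gr} TY_\hbar(\mathfrak{so}(2n)) \twoheadrightarrow U(\mathfrak{sl}(2n)[z]^{\widetilde\tau})$. Combined with a PBW-type description of $Y^{\mathrm{tw}}_\hbar(\mathfrak{so}(2n))$ (available from the RTT or Drinfeld side), comparing the two graded pieces forces $\Phi$ to have no kernel. The delicate point is that the relations in Section~6 mix odd- and even-degree generators nontrivially, so establishing the filtration is well-defined and that its associated graded is no larger than $U(\mathfrak{sl}(2n)[z]^{\widetilde\tau})$ is where the care lies; this is where I expect the bulk of the technical work in the proof to be concentrated.
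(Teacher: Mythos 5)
Your proposal follows essentially the same route as the paper: an explicit homomorphism $\Phi$ from the finitely presented algebra into the twisted Yangian, surjectivity from the fact that the classical generators in degrees $0$ and $1$ generate, and injectivity by filtering by the degree of the second index, comparing $\on{gr}TY_\hbar(\mathfrak{so}(2n))$ (controlled from above by the finite classical presentation of Theorem~\ref{Mini}) with $\on{gr}\widetilde{TY}_\hbar(\mathfrak{so}(2n))\cong U(\mathfrak{sl}(2n)[u]^{\tau})$ (Proposition~2.23 of \cite{MNO}). The only cosmetic difference is that the paper takes the RTT-type generators $S^{(r)}_{i,j}$ of \cite{MNO} as the target of $\Phi$ rather than the Drinfeld presentation of Lu--Wang--Zhang, which changes only the bookkeeping in the relation-checking step.
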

Since Guay \cite{Gu1} defined the affine Yangian associated with $\widehat{\mathfrak{sl}}(n)$ by extending the minimalistic presentation of the Yangian associated with $\mathfrak{sl}(n)$, we expect that this finite presentation will be useful for giving a definition of the twisted affine Yangian. In \cite{U77}, the author defined the twisted affine Yangian of type $C$ as a subalgebra of the affine Yangian associated with $\widehat{\mathfrak{sl}}(n)$ and constructed a relationship between the twisted affine Yangian of type $C$ and a rectangular $W$-algebra of type $D$ in certain cases. However, this definition is not so useful because the defining relations of the twisted affine Yangian were not given. In Section 7, we propose a new definition of the twisted affine Yangian of type $D$ by extending the definition of $TY_\hbar(\mathfrak{so}(2n))$. We conjecture that this twisted affine Yangian will become a coideal of the affine Yangian associated with $\widehat{\mathfrak{sl}}(2n)$. 

\section{Lie algebra $\mathfrak{so}(2n)$}

Let $n$ be a positive integer greater than $4$ and define a set $I_n=\{\pm1,\pm2,\cdots,\pm n\}$. 
We set an involution of the general linear Lie algebra $\mathfrak{gl}(2n)=\bigoplus_{i,j\in I_n}\mathbb{C}E_{i,j}$ by $\tau(E_{i,j})=-E_{-j,-i}$,
where $E_{i,j}$ is a $2n\times2n$-matrix whose $(p,q)$ component is $\delta_{i,p}\delta_{j,q}$. By the definition of $\tau$, the restriction of $\tau$ to $\mathfrak{sl}(2n)$ becomes an involution of $\mathfrak{sl}(2n)$. Let us set two subalgebras of $\mathfrak{sl}(2n)$:
\begin{align*}
\mathfrak{b}_1&=\{x\in\mathfrak{sl}(2n)\mid \tau(x)=x\}\\
&=\{\begin{pmatrix}
A&B \\
C&-A^t 
\end{pmatrix}\in\mathfrak{gl}(2n)\mid A,B,C\in\mathfrak{gl}(n),B^t=-B,C^t=-C\},\\
\mathfrak{b}_2&=\{x\in\mathfrak{sl}(n)\mid \tau(x)=-x\}\\
&=\{\begin{pmatrix}
A&B \\
C&A^t 
\end{pmatrix}\in\mathfrak{gl}(2n)\mid A\in\mathfrak{sl}(n),B,C\in\mathfrak{gl}(n),B^t=B,C^t=C\},
\end{align*}
where $X^t$ is the transpose of $X$ for an $n\times n$-matrix $X$.
It is well known that $\mathfrak{b}_1$ is isomorphic to the orthogonal Lie algebra $\mathfrak{so}(2n)$.

Let us set $f_{i,j}$ as $E_{i,j}-E_{-j,-i}$ for $i,j\in I_n$.
By the definition, $\mathfrak{b}_1$ has a basis
\begin{equation*}
\{f_{i,j}\mid i,j>0\}\cup\{f_{i,j}\mid ij<0,i\neq-j,|i|<|j|\}
\end{equation*}
and has a non-degenerate symmetric invariant bilinear form $(\ ,\ )$ defined by $(f_{i,j},f_{p,q})=\delta_{i,q}\delta_{j,p}-\delta_{i,-p}\delta_{j,-q}$.
We set $(a_{i,j})_{1\leq i,j\leq n}$ as the Cartan matrix of $\mathfrak{so}(2n)$:
\begin{align*}
a_{i,j}&=\begin{cases}
2\delta_{i,j}-\delta_{i,j+1}-\delta_{i+1,j}&\text{ if }1\leq i,j\leq n-1,\\
2\delta_{i,j}-\delta_{i,n-2}-\delta_{j,n-2}&\text{ if }i\text{ or }j=n
\end{cases}
\end{align*}
We also take the Chevalley generators of $\mathfrak{b}_1$ as follows:
\begin{gather*}
h_i=\begin{cases}
f_{i,i}-f_{i+1,i+1}&\text{ if }1\leq i\leq n-1,\\
f_{n-1,n-1}+f_{n,n}&\text{ if }i=n,
\end{cases}\\
x^+_i=\begin{cases}
f_{i,i+1}&\text{ if }1\leq i\leq n-1,\\
f_{n-1,-n}&\text{ if }i=n,
\end{cases}\ 
x^-_i=\begin{cases}
f_{i+1,i}&\text{ if }1\leq i\leq n-1,\\
f_{-n,n-1}&\text{ if }i=n.
\end{cases}
\end{gather*}
\section{Twisted current algebra $\mathfrak{sl}(2n)[u]^{\widetilde{\tau}}$}
The current algebra $\mathfrak{sl}(n)[u]$ is a Lie algebra whose vector space is $\mathfrak{sl}(n)\otimes\mathbb{C}[u]$ with the commutator relation:
\begin{equation*}
[x\otimes u^r,y\otimes u^s]=[x,y]\otimes u^{r+s}\text{ for }x,y\in\mathfrak{sl}(n).
\end{equation*}
Let $(A_{i,j})_{1\leq i,j\leq n}$ be the Cartan matrix of $\mathfrak{sl}(n)$. It is famous that the current algebra $\mathfrak{sl}(n)[u]$ has the following presentation.
\begin{Lemma}
The associative algebra $U(\mathfrak{sl}(n)[u])$ is isomorphic to the associative algebra generated by
\begin{equation*}
\{\mathfrak{h}_{i,r},\mathfrak{x}^\pm_{i,r}\mid1\leq i\leq n-1,r\in\mathbb{Z}_{\geq0}\}
\end{equation*}
with the relations:
\begin{gather}
[\mathfrak{h}_{i,r},\mathfrak{h}_{j,s}]=0,\label{last1}\\
[\mathfrak{h}_{i,0},\mathfrak{x}^\pm_{j,s}]=A_{i,j}\mathfrak{x}^\pm_{j,s},\label{last2}\\
[\mathfrak{x}^+_{i,r},\mathfrak{x}^-_{j,s}]=\delta_{i,j}\mathfrak{h}_{i,r+s},\label{last3}\\
[\mathfrak{x}^\pm_{i,r},\mathfrak{x}^\pm_{j,s}]=0\text{ for }A_{i,j}=0,\label{last4}\\
[\mathfrak{x}^\pm_{i,r_1},[\mathfrak{x}^\pm_{i,r_2},\mathfrak{x}^\pm_{j,s}]]=0\text{ for }A_{i,j}=-1,\label{last5}\\
[\mathfrak{h}_{i,r+1},\mathfrak{x}^\pm_{j,s}]=[\mathfrak{h}_{i,r},\mathfrak{x}^\pm_{j,s+1}],\label{last6}\\
[\mathfrak{x}^\pm_{i,r+1},\mathfrak{x}^\pm_{j,s}]=[\mathfrak{x}^\pm_{i,r},\mathfrak{x}^\pm_{j,s+1}].\label{last7}
\end{gather}
Moreover, the generators $\{\mathfrak{x}^+_{i,r}\mid 1\leq i\leq n-1,r\in\mathbb{Z}_{\geq0}\}$ and the relations \eqref{last4}, \eqref{last5} and \eqref{last7} give a Lie subalgebra $\mathfrak{n}_+(n)=\bigoplus_{1\leq i<j\leq n}\limits\mathbb{C}E_{i,j}u^r\subset\mathfrak{sl}(n)[u]$.
\end{Lemma}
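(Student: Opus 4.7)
The plan is to construct an algebra homomorphism $\phi$ from the abstract algebra $\mathcal{A}$ presented above to $U(\mathfrak{sl}(n)[u])$, prove surjectivity, and then establish injectivity via a PBW-type argument, following the standard approach for such ``current algebra'' presentations.

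First I would define $\phi$ on generators by $\mathfrak{h}_{i,r} \mapsto h_i u^r$ and $\mathfrak{x}^\pm_{i,r} \mapsto x^\pm_i u^r$, where $h_i, x^\pm_i$ are the Chevalley generators of $\mathfrak{sl}(n)$. Relations \eqref{last1}--\eqref{last5} follow from the Chevalley--Serre relations of $\mathfrak{sl}(n)$ together with the commutative polynomial structure of $\mathbb{C}[u]$, while \eqref{last6} and \eqref{last7} are immediate from $[xu^a, yu^b] = [x,y]u^{a+b}$. For surjectivity I would argue inductively on $|i-j|$ that every $E_{i,j}u^r$ lies in $\phi(\mathcal{A})$: the root vector $E_{i,j}$ for $j>i$ is an iterated commutator of simple generators $x^+_k$, and the same nested bracket applied to $\mathfrak{x}^+_{k,0}$'s with one factor replaced by $\mathfrak{x}^+_{k,r}$ maps onto $E_{i,j}u^r$; the Cartan part is covered by relation \eqref{last3}.

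For injectivity I would first combine \eqref{last6} with \eqref{last2} to obtain $[\mathfrak{h}_{i,1},\mathfrak{x}^\pm_{j,s}] = A_{i,j}\mathfrak{x}^\pm_{j,s+1}$ (up to the usual sign), so that an induction on the $u$-mode shows $\mathcal{A}$ is in fact generated by the finite set $\{\mathfrak{h}_{i,0},\mathfrak{h}_{i,1},\mathfrak{x}^\pm_{i,0} \mid 1\le i\le n-1\}$. I would then construct candidate root vectors $\mathfrak{x}^\pm_{\alpha,r}$ for each positive root $\alpha$ of $\mathfrak{sl}(n)$ and each $r\ge 0$ as suitable iterated brackets of the simple generators, verify using \eqref{last4}, \eqref{last5}, \eqref{last7} and the Jacobi identity that ordered monomials of the form $\prod\mathfrak{x}^-_{\alpha,r}\cdot\prod\mathfrak{h}_{i,r}\cdot\prod\mathfrak{x}^+_{\alpha,r}$ span $\mathcal{A}$, and observe that their images under $\phi$ form a PBW basis of $U(\mathfrak{sl}(n)[u])$; comparing the two spanning sets forces $\phi$ to be injective.

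The ``moreover'' statement is obtained by running the same argument restricted to the positive part: relations \eqref{last4}, \eqref{last5}, \eqref{last7} are exactly the Lie-algebra relations presenting $\mathfrak{n}_+(n)[u]$ (Serre relations plus the loop relation among simple-root generators), and the PBW strategy on ordered monomials in the $\mathfrak{x}^+_{\alpha,r}$ identifies the resulting Lie subalgebra of $\mathcal{A}$ with $\mathfrak{n}_+(n)[u]$. The main obstacle is the spanning step in the injectivity argument: one must verify that the listed relations suffice to straighten every word in the generators into the chosen ordered basis, which requires a careful inductive proof -- relying on \eqref{last6}, \eqref{last7} and repeated Jacobi applications -- that a bracket such as $[\mathfrak{x}^+_{i,r},\mathfrak{x}^+_{j,s}]$ with $A_{i,j}=-1$ collapses onto a single higher-mode root vector $\mathfrak{x}^+_{\alpha,r+s}$ for the composite root $\alpha=\alpha_i+\alpha_j$, without producing any extra relation beyond those already listed.
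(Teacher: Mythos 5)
Your outline is correct and is exactly the standard argument that the paper itself invokes without proof (the lemma is stated as ``famous''): build the evaluation homomorphism on generators, check the relations, get surjectivity from iterated brackets producing the root vectors $E_{i,j}u^r$, and get injectivity by showing that relations \eqref{last4}--\eqref{last7} let you straighten every word onto ordered monomials in candidate root vectors whose images form a PBW basis of $U(\mathfrak{sl}(n)[u])$. You also correctly identify the one point needing care, namely that \eqref{last7} collapses $[\mathfrak{x}^+_{i,r},\mathfrak{x}^+_{j,s}]$ onto a single mode-$(r+s)$ composite root vector (and, as you note, the sign $\pm A_{i,j}$ missing from \eqref{last2} as printed is a typo to be read in), so there is nothing substantive to add.
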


We define an involution of the current algebra $\mathfrak{sl}(2n)[u]$ by $\widetilde{\tau}(x\otimes u^r)=(-1)^r\tau(x)\otimes u^r$.
By the definition of $\widetilde{\tau}$, the twisted current algebra $\mathfrak{sl}(2n)[u]^{\widetilde{\tau}}=\{x\in\mathfrak{sl}(2n)[u]\mid \widetilde{\tau}(x)=x\}$ can be decomposed to the sum of $\{x\otimes u^{2r}\mid x\in\mathfrak{b}_1\}$ and $\{x\otimes u^{2r+1}\mid x\in\mathfrak{b}_2\}$. Let us set $f^r_{i,j}=E_{i,j}u^r-(-1)^rE_{-j,-i}u^r$, which satisfies the relation
\begin{equation*}
[f^r_{i,j},f^s_{p,q}]=\delta_{j,p}f^{r+s}_{i,q}-\delta_{i,q}f^{r+s}_{p,j}-(-1)^r\delta_{-i,p}f^{r+s}_{-j,q}+(-1)^r\delta_{-j,q}f^{r+s}_{p,-i}.
\end{equation*}
Then, by the definition, $f^r_{i,j}$ spans the twisted current algebra $\mathfrak{sl}(2n)[u]^{\widetilde{\tau}}$.  We also take generators of $\mathfrak{sl}(2n)[u]^\tau$ as
\begin{gather*}
h_{i,r}=\begin{cases}
f^r_{i,i}-f^r_{i+1,i+1}&\text{ if }1\leq i\leq n-1,\\
f^r_{n-1,n-1}+f^r_{n,n}&\text{ if }i=n,r\text{ is }even,
\end{cases}\\
x^+_{i,r}=\begin{cases}
f^r_{i,i+1}&\text{ if }1\leq i\leq n-1,\\
f^r_{n-1,-n}&\text{ if }i=n,\\
f^r_{n,-n}&\text{ if }i=n+1,r\text{ is }odd,
\end{cases}\ 
x^-_{i,r}=\begin{cases}
f^r_{i+1,i}&\text{ if }1\leq i\leq n-1,\\
f^r_{-n,n-1}&\text{ if }i=n,\\
f^r_{-n,n}&\text{ if }i=n+1,r\text{ is }odd.
\end{cases}
\end{gather*}

For a non-negative integer $r$, we set matrices $(a^r_{i,j})$ as
\begin{align*}
a^r_{i,j}=\begin{cases}
a_{i,j}&\text{ if }1\leq i,j\leq n,r\text{ is even,}\\
a_{i,j}+2\delta_{i,n-1}\delta_{j,n}&\text{ if }1\leq i\leq n-1,1\leq j\leq n,r\text{ is odd}.
\end{cases}
\end{align*}
Then, we have $[h_{i,r},x^\pm_{j,s}]=\pm a^r_{i,j}x^\pm_{j,r+s}=0$ for $j\neq n+1$. 

In this section, we will give a presentation of the universal enveloping algebra of $\mathfrak{sl}(2n)[u]^\tau$ whose generators are $\{h_{i,r},x^\pm_{i,r}\}$.
\begin{Definition}
We define $L$ as the associative algebra generated by
\begin{equation*}
\{H_{i,2r},X^\pm_{i,2r}\mid 1\leq i\leq n,r\in\mathbb{Z}_{\geq0}\}\cup\{H_{i,2r+1},X^\pm_{j,2r+1}\mid 1\leq i\leq n-1,1\leq j\leq n+1,r\in\mathbb{Z}_{\geq0}\}
\end{equation*}
with the commutator relations:
\begin{gather}
[H_{i,r},H_{j,s}]=0,\label{Eq901}\\
[H_{i,r},X^\pm_{j,s}]=\pm a^{r}_{i,j}X^\pm_{j,r+s}\text{ for }j\neq n+1,\label{Eq901-1}\\
[H_{i,r},X^\pm_{n+1,s}]=\begin{cases}
0&r\text{ is odd},\\
\pm(2\delta_{i,n}-2\delta_{i,n-1})X^\pm_{n+1,r+s}&r\text{ is even},
\end{cases}\label{Eq901-1-1}\\
[X^+_{i,r},X^-_{j,s}]=\delta_{i,j}H_{i,r+s}\text{ for }1\leq i,j\leq n-1,\label{Eq901-2}\\
[X^+_{n,r},X^-_{j,s}]=\begin{cases}
\delta_{n,j}H_{n,r+s}&\text{ if }j\neq n-1,n+1,r+s\text{ is even},\\
\delta_{n,j}H_{n-1,r+s}&\text{ if }j\neq n-1,n+1,r+s\text{ is odd},
\end{cases}\label{Eq901-3}\\
[X^+_{n,r},X^-_{n-1,s}]=0\text{ if }r+s\text{ is even},\label{Eq901-4}\\
[X^+_{n,r},X^-_{n-1,s}]=-X^+_{n+1,r+s}\text{ if }r+s\text{ is odd},\label{Eq901-5}\\
[X^+_{j,r},X^-_{n,s}]=\delta_{n,j}H_{n,r+s}\text{ if }j\neq n-1,n+1,\label{Eq901-6}\\
[X^+_{n-1,r},X^-_{n,s}]=0\text{ if }r+s\text{ is even},\label{Eq901-7}\\
[X^+_{n-1,r},X^-_{n,s}]=-X^-_{n+1,r+s}\text{ if }r+s\text{ is odd},\label{Eq901-8}\\
[X^+_{n+1,2r+1},X^-_{n+1,2s+1}]=2(H_{n,2r+2s+2}-H_{n-1,2r+2s+2}),\label{Eq901-9}\\
[X^\pm_{i,r+1},X^\pm_{j,s}]=[X^\pm_{i,r},X^\pm_{j,s+1}]\text{ for }1\leq i,j\leq n\text{ and }(i,j)\neq(n-1,n),(n,n-1),\label{Eq901-10-0}\\
[X^\pm_{n,r+1},X^\pm_{n-1,s}]=-[X^\pm_{n,r},X^\pm_{n-1,s+1}],\label{Eq901-10}\\
[X^\pm_{n+1,2r+1},X^\pm_{n+1,2s+1}]=0,\label{Eq901-10-1}\\
[X^\pm_{i,r},X^\pm_{i,s}]=0\text{ for }1\leq i\leq n,\label{Eq901-10-2}\\
[X^+_{n+1,2r+1},X^\pm_{j,s}]=0\text{ for }j<n-1,\label{Eq901-11}\\
[X^-_{n+1,2r+1},X^\pm_{j,s}]=0\text{ if }j<n-1,\label{Eq901-12}\\
[X^+_{n+1,2r+1},X^+_{n-1,s}]=-2X^+_{n,2r+s+1},\ [X^+_{n+1,2r+1},X^-_{n-1,s}]=0,\label{Eq901-13}\\
[X^+_{n+1,2r+1},X^-_{n,s}]=2X^-_{n-1,2r+s+1},\ [X^+_{n+1,2r+1},X^+_{n,s}]=0,\label{Eq901-14}\\
[X^-_{n+1,2r+1},X^-_{n-1,s}]=2X^-_{n,2r+s+1},\ [X^-_{n+1,2r+1},X^+_{n-1,s}]=0,\label{Eq901-15}\\
[X^-_{n+1,2r+1},X^+_{n,s}]=-2X^+_{n-1,2r+s+1},\ [X^-_{n+1,2r+1},X^-_{n,s}]=0,\label{Eq901-16}\\
[X^\pm_{i,r},X^\pm_{j,s}]=0\text{ if }a_{i,j}=0\text{ and }(i,j)\neq (n-1,n),(n,n-1),\label{Eq901-17}\\
[X^\pm_{n-1,r},X^\pm_{n,s}]=0\text{ if }r+s\text{ is even},\label{Eq901-17-1}\\
[X^\pm_{i,r_1},[X^\pm_{i,r_2},X^\pm_{j,s}]]=0\text{ if }(i,j)=(n-1,n),(n,n-1)\text{ and }r_1+s,r_2+s\text{ are odd},\label{Eq901-17-2}\\
[X^\pm_{i,r_1},[X^\pm_{i,r_2},X^\pm_{j,s}]]=0\text{ if }a_{i,j}=-1\text{ and }1\leq i,j\leq n,\label{Eq902}\\
[X^\pm_{n,s},[X^\pm_{n-2,r},X^\pm_{n-1,u}]]=(-1)^{s+u}[X^\pm_{n-1,u},[X^\pm_{n-2,r},X^\pm_{n,s}]],\label{Eq903-1}\\
[[X^\pm_{n,0},X^\pm_{n-2,r}],[X^\pm_{n,0},X^\pm_{n-1,s}]]=0,\label{Eq903-3}\\
[[X^+_{n,s},[X^+_{n-2,0},X^+_{n-3,0}]],[[X^+_{n-1,0},X^+_{n-2,0}],X^+_{n,r}]]=0,\label{Eq903-4}\\
[[X^+_{n,2r+1},X^+_{n-2,0}],[[X^+_{n-1,0},X^+_{n-2,0}],X^+_{n,s}]]=0,\label{Eq903-5},\\
[[X^\pm_{n-1,0},X^\pm_{n-2,0}],[X^\pm_{n-1,s},X^\pm_{n,r}]]=0,\label{Eq903-7}\\
[[X^+_{n-1,s},[X^+_{n-2,0},X^+_{n-3,0}]],[[X^+_{n-1,0},X^+_{n-2,0}],X^+_{n,r}]]=0,\label{Eq903-8}\\
[[X^+_{n-1,s},X^+_{n-2,0}],[[X^+_{n-1,0},X^+_{n-2,0}],X^+_{n,r}]] = 0.\label{Eq903-8.5}
\end{gather}
\end{Definition}
In this section, we will show that $L$ is isomorphic to $U(\mathfrak{sl}(2n)^{\widetilde{\tau}})$.

For positive integers $1\leq i<j\leq n$, we set
\begin{align}
X_{i,j,r}&=\prod_{u=i}^{j-2}\ad(X^+_{u,0})(X^+_{j-1,r}),\label{9001}\\
X_{i,-j,r}&=\Big(\prod_{u=i}^{j-2}\ad(X^+_{u,0})\Big)\Big(\prod_{u=j+1}^{n}\ad(X_{u-2,u,0})\Big)x^+_{n,r},\label{9007}\\
X_{n,-n,2r+1}&=X^+_{n+1,2r+1},\label{9009}\\
X_{i,-i,2r+1}&=\ad(X^+_{i,0})\Big(\prod_{u=i+2}^{n}\ad(X_{u-2,u,0})\Big)X^+_{n,2r+1}.\label{9011}
\end{align}
We note that $X_{i,j,r}$ corresponds to $f_{i,j,r}$, $X_{i,-j,r}$ corresponds to $X_{i,-i,2r+1}$ corrresponds to $-f_{i,-i,r}$, and $\begin{cases}
-(-1)^rf_{i,j,r}&\text{ if }j-n\text{ is odd},\\
-f_{i,j,r}&\text{ if }j-n\text{ is even and }j\leq n,\\
f_{i,-n,r}&\text{ if }j=n.
\end{cases}$
\begin{Lemma}\label{gat}
The following elements are spanned by $\{X_{u,v,r},X_{u,-u,r},X_{u,-v,r}\mid1\leq u<v\leq n,r\in\mathbb{Z}_{\geq0}\}$:
\begin{gather*}
[X_{i,j,r},X_{a,b,s}],[X^+_{n,s},X_{i,j,r}],[X^+_{p,s},X_{i,-j,r}],[X^+_{n,s},X_{i,-j,r}],[X^+_{p,s},X_{i,-i,2r+1}][X^+_{n,s},X_{i,-i,2r+1}]
\end{gather*}
for $1\leq p\leq n-1$ and $1\leq i<j\leq n,1\leq a<b\leq n$.
\end{Lemma}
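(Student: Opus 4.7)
The plan is to carry out a joint induction on the ``depth'' of the nested adjoint expressions \eqref{9001}--\eqref{9011} defining $X_{i,j,r}$, $X_{i,-j,r}$, and $X_{i,-i,2r+1}$, relying on the Jacobi identity to peel off one outer $\ad(X^+_{u,0})$ at a time and invoking the defining relations \eqref{Eq901-10-0}--\eqref{Eq903-8.5} in the base cases. Concretely, I assign to each $X_{i,j,r}$ the depth $j-i-1$, to each $X_{i,-j,r}$ the depth $(j-i-1)+(n-j)$, and to $X_{i,-i,2r+1}$ the depth $1+(n-i-1)$, and then induct on the sum of the depths of the two arguments of a bracket. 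The base cases reduce to the Serre-type relations \eqref{Eq901-17}--\eqref{Eq901-17-2}, \eqref{Eq902}, and the ``jump'' relations \eqref{Eq901-13}--\eqref{Eq901-16}, which either vanish or land directly in the spanning family by construction.

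For the inductive step on $[X_{i,j,r}, X_{a,b,s}]$ with $j > i+1$, I would write $X_{i,j,r} = \ad(X^+_{i,0}) X_{i+1,j,r}$ and apply the Jacobi identity
\begin{equation*}
[\ad(X^+_{i,0}) X_{i+1,j,r}, X_{a,b,s}] = \ad(X^+_{i,0})[X_{i+1,j,r}, X_{a,b,s}] - [X_{i+1,j,r}, [X^+_{i,0}, X_{a,b,s}]].
\end{equation*}
By induction each inner bracket is a sum of spanning elements, and each of the two outer operations (applying $\ad(X^+_{i,0})$ or bracketing with $X_{i+1,j,r}$) is again of a type listed in the lemma, hence controlled by the inductive hypothesis. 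The same Jacobi-peeling handles $X_{i,-j,r}$ and $X_{i,-i,2r+1}$ via \eqref{9007} and \eqref{9011}: the outer chain $\prod_u \ad(X_{u-2,u,0})$ is stripped one factor at a time, and brackets of the form $[X_{u-2,u,0}, \cdot]$ reduce, by a double application of Jacobi, to brackets of $X^+_{u-2,0}$ and $X^+_{u,0}$ with the other factor, which are again covered inductively.

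The hard part will be the combinatorial case analysis around the fork of the Dynkin diagram of type $D_n$. The generator $X^+_{n+1,2r+1} = X_{n,-n,2r+1}$ enters via the asymmetric relations \eqref{Eq901-13}--\eqref{Eq901-16}, the parity of $r+s$ controls whether \eqref{Eq901-17-1} kills a bracket or \eqref{Eq901-10} forces a sign, and the higher Serre relations \eqref{Eq903-1}--\eqref{Eq903-8.5} are needed precisely so that nested brackets passing through the forked vertices $n-1$ and $n$ collapse to a single $X_{u,-v,s}$ or $X_{u,-u,2s+1}$ (up to sign and parity) rather than producing spurious terms. To keep the bookkeeping tractable, I would fix a total order on the spanning family compatible with the positive roots of $\mathfrak{so}(2n)$ and verify, case by case according to whether $j \leq n-2$, $j = n-1$, or $j = n$ (and similarly for the second factor), that every commutator a priori falling outside the span can be rewritten as a sum of spanning elements using exactly one of the defining relations.
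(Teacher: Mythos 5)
Your overall strategy (peel off outer adjoints with the Jacobi identity and reduce to the defining relations at the fork) is the same in spirit as the paper's, and your treatment of $[X_{i,j,r},X_{a,b,s}]$ is fine, though the paper gets that case more cheaply by observing that \eqref{Eq901-10-0}, \eqref{Eq901-17} and \eqref{Eq902} are exactly the presentation of $\mathfrak{n}_+(n)$, so the formula \eqref{gat1} is automatic. But as written the argument has a genuine gap: the induction does not close. When you peel $X_{i,-j,r}=\ad(X^+_{i,0})X_{i+1,-j,r}$ inside $[X^+_{p,s},X_{i,-j,r}]$, the Jacobi identity produces $[[X^+_{p,s},X^+_{i,0}],X_{i+1,-j,r}]$, whose first factor is a length-two root vector $X_{p,p+2,s}$; this bracket type is not in the lemma's list. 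Your proposed fix --- a second Jacobi step rewriting it as $[X^+_{p,s},[X^+_{p+1,0},Y]]-[X^+_{p+1,0},[X^+_{p,s},Y]]$ --- replaces the inner bracket by a spanning element whose ``depth'' can be as large as, or larger than, that of the element you started from (for instance $[X^+_{p,s},X_{i+1,-j,r}]$ can equal $X_{i,-j,r+s}$), so the total-depth parameter you induct on does not strictly decrease and the scheme is not well-founded. The paper sidesteps this by proving not mere membership in the span but explicit closed formulas for every bracket in the list, after first re-expressing $X_{i,-j,r}$ and $X_{i,-i,2r+1}$ in the nested forms $[X_{i,n,0},[X_{j,n-1,0},X^+_{n,r}]]$, $[X_{i,n-1,0},[X_{j,n,0},X^+_{n,r}]]$, etc., and establishing a battery of auxiliary identities (\eqref{Eq903-9.5}--\eqref{Eq903-18-2}).

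Second, your claim that every out-of-span commutator ``can be rewritten \dots using exactly one of the defining relations'' is too optimistic. Several of the needed auxiliary facts are Serre-type relations for \emph{non-simple} root vectors, e.g.\ $[X_{i,n-1,0},[X_{i,n,0},X^+_{n,2r}]]=0$ (relation \eqref{Eq903-9}) and $[X_{i,n,0},[X_{i,n,0},X^+_{n,r}]]=0$ (relation \eqref{Eq903-18-2}); the first is proved in the paper by showing the element equals its own negative, and the second by a separate induction resting on \eqref{Eq903-8} and \eqref{Eq903-8.5}. Likewise the reduction of $[X^+_{n,s},X_{i,-j,r}]$ for $j\le n-2$ needs the four-fold nested relation \eqref{Eq903-4}, which is invisible in your outline. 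Without identifying and proving these intermediate lemmas, the ``case-by-case verification'' you defer to is precisely the content of the proof, so the proposal remains a plan rather than an argument.
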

\begin{comment}
\begin{Lemma}\label{gat}
For $1\leq p\leq n-1$, the following relations hold:
\begin{gather}
[X_{i,j,r},X_{p,q,s}]=\delta_{j,p}X_{i,q,r+s}-\delta_{i,q}X_{p,j,r+s},\label{gat1}\\
[X^+_{n,s},X_{i,j,r}]=-\delta_{j,n-1}X_{i,-n,r+s}-(-1)^r\delta_{n,j}X_{i,-n+1,r+s},\label{gat2}\\
[X^+_{p,s},X_{i,-j,r}]=\delta_{i,p+1}X_{i-1,-j,r+s}+\delta_{j,p+1}(1-\delta_{i,j-1}\delta_{r+s,even})X_{i,-j+1,r+s},\label{gat7}\\
[X^+_{n,s},X_{i,-j,r}]=0,\label{gat8}\\
[X^+_{p,s},X_{i,-i,2r+1}]=2\delta_{i,p+1}X_{i-1,-i,2r+s+1}\text{ for }1\leq i<n,\label{gat9}\\
[X^+_{n,s},X_{i,-i,r}]=0.\label{gat10}
\end{gather}
\end{Lemma}
\end{comment}
The proof of Theorem~\ref{gat} is given in the appendix. For the proof of Lemma~\ref{gat}, we only use the relations \eqref{Eq901-10-0}-\eqref{Eq901-10-2} and \eqref{Eq901-17}-\eqref{Eq903-8.5}. 

Similarly, for $1\leq i<j\leq n$, we define the following elements:
\begin{align}
X_{j,i,r}&=\prod_{u=i}^{j-2}\ad(X^-_{u,0})(X^-_{j-1,r}),\label{9002}\\
X_{-i,j,r}&=\Big(\prod_{u=i}^{j-2}\ad(X^-_{u,0})\Big)\Big(\prod_{u=j+1}^{n}\ad(X_{u,u-2,0})\Big)X^-_{n,r},\label{9008}\\
X_{-n,n,2r+1}&=X^-_{n+1,2r+1},\label{9010}\\
X_{-i,i,2r+1}&=\ad(X^-_{i,0})\Big(\prod_{u=i+2}^{n}\ad(X_{u,u-2,0})\Big)X^-_{n,2r+1}.\label{9012}
\end{align}
Since we only use the relations \eqref{Eq901-10-0}-\eqref{Eq901-10-2} and \eqref{Eq901-17}-\eqref{Eq903-8.5} in the proof of Lemma~\ref{gat},
we obtain the following lemma by the same proof as Lemma~\ref{gat}.
\begin{Lemma}\label{gat-tui}
The following elements are spanned by $\{X_{v,u,r},X_{-u,u,2r+1},X_{-v,u,r}\mid1\leq u<v\leq n,r\in\mathbb{Z}_{\geq0}\}$:
\begin{gather*}
[X_{j,i,r},X_{b,a,s}],[X^-_{n,s},X_{j,i,r}],[X^-_{p,s},X_{-j,i,r}],[X^-_{n,s},X_{-j,i,r}],[X^-_{p,s},X_{-j,i,2r+1}],[X^-_{n,s},X_{-i,i,2r+1}]
\end{gather*}
for $1\leq p\leq n-1$ and $1\leq i<j\leq n,1\leq a<b\leq n$.
\end{Lemma}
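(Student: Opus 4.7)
The plan is to reduce Lemma~\ref{gat-tui} to Lemma~\ref{gat} via the formal $\pm$ symmetry of $L$, which is exactly what the sentence preceding the statement already suggests. First I would verify that every defining relation invoked in the appendix proof of Lemma~\ref{gat}, namely \eqref{Eq901-10-0}-\eqref{Eq901-10-2} together with \eqref{Eq901-17}-\eqref{Eq903-8.5}, either is stated uniformly in $\pm$ or admits an evident $-$ counterpart. Granting this, the assignment $X^+_{i,r}\mapsto X^-_{i,r}$ extends to an algebra isomorphism $\omega$ from the subalgebra $L^{+}\subset L$ generated by the $X^+_{i,r}$ onto the subalgebra $L^{-}\subset L$ generated by the $X^-_{i,r}$.

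Next, comparing \eqref{9001}, \eqref{9007}, \eqref{9009}, \eqref{9011} with \eqref{9002}, \eqref{9008}, \eqref{9010}, \eqref{9012}, one sees that each negative root vector is obtained from its positive counterpart by literally substituting $X^-$ for $X^+$ in the same chain of adjoint actions, so that $\omega(X_{i,j,r})=X_{j,i,r}$, $\omega(X_{i,-j,r})=X_{-j,i,r}$, $\omega(X_{i,-i,2r+1})=X_{-i,i,2r+1}$, and $\omega(X^+_{n+1,2r+1})=X^-_{n+1,2r+1}$. Applying $\omega$ to each of the six spanning statements of Lemma~\ref{gat} then produces precisely the corresponding six statements of Lemma~\ref{gat-tui}.

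The main obstacle is the bookkeeping required to confirm that $\omega$ is well-defined. The relations \eqref{Eq901-10-0}-\eqref{Eq901-10-2}, \eqref{Eq901-17}-\eqref{Eq903-3}, and \eqref{Eq903-7} are already written with $\pm$, so they transfer immediately; but \eqref{Eq903-4}, \eqref{Eq903-5}, \eqref{Eq903-8}, \eqref{Eq903-8.5} are stated only for $+$, and one must either add their $-$ analogues to the definition of $L$ or derive them from the other relations before invoking $\omega$. Should this approach via $\omega$ prove inconvenient, the safe backup is to transcribe the appendix proof of Lemma~\ref{gat} line by line, replacing every $X^+$ by $X^-$ and every index pair $(i,j)$ by $(j,i)$: since each step uses only one of the listed relations, and a $-$ version of that relation is available at the corresponding position, the argument produces the negative analogue verbatim, which is what the author's remark asserts.
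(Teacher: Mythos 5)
Your proposal is correct and takes essentially the same route as the paper, whose entire proof is the observation that the argument for Lemma~\ref{gat} uses only \eqref{Eq901-10-0}--\eqref{Eq901-10-2} and \eqref{Eq901-17}--\eqref{Eq903-8.5} and therefore transfers verbatim under $X^+_{i,r}\mapsto X^-_{i,r}$ (your map $\omega$ is just a formalization of this). Your caveat is well taken and in fact identifies a genuine imprecision in the paper: the relations \eqref{Eq903-4}, \eqref{Eq903-5}, \eqref{Eq903-8} and \eqref{Eq903-8.5} are stated in the definition of $L$ only for the $+$ sign, so the required symmetry is not literally present as written, and the $-$ analogues must either be added to the presentation or derived before either your isomorphism $\omega$ or the paper's ``same proof'' argument is legitimate.
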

\begin{Lemma}\label{indu1}
The following elements are spanned by the set $\{H_{i,r},X^\pm_{n+1,2r+1},X_{p,q,r}\}$:
\begin{equation*}
[H_{i,s},X_{p,q,r}],\ [X^\pm_{i,s},X_{p,q,r}],\ ,\ [X^\pm_{n+1,s},X_{p,q,r}].
\end{equation*}
\end{Lemma}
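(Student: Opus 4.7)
The plan is to argue by induction on the nested depth of the ad-expressions that define the $X_{p,q,r}$ in \eqref{9001}--\eqref{9012} (and their negative counterparts), proceeding case by case through the three families of commutators in the statement. Two of the cases are already essentially settled: for $[X^+_{i,s},X_{p,q,r}]$ with $X_{p,q,r}$ of positive type, the assertion is exactly Lemma~\ref{gat}, and for $[X^-_{i,s},X_{p,q,r}]$ with $X_{p,q,r}$ of negative type it is Lemma~\ref{gat-tui}. What remains is to treat the action of $H_{i,s}$, the action of $X^\pm_{i,s}$ on root vectors of the opposite sign, and the action of $X^\pm_{n+1,s}$.

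For $[H_{i,s},X_{p,q,r}]$ the argument is a direct derivation computation: since $[H_{i,s},H_{j,t}]=0$ by \eqref{Eq901} and $[H_{i,s},X^\pm_{j,t}]=\pm a^s_{i,j}X^\pm_{j,s+t}$ (resp.\ \eqref{Eq901-1-1} for $j=n+1$) by \eqref{Eq901-1}, one may commute $H_{i,s}$ through each $\ad(X^+_{u,0})$ in the defining formula for $X_{p,q,r}$ using the Jacobi identity; each term one produces is a scalar multiple of an $X_{a,b,t}$ of the same shape with the innermost degree shifted by $s$, and therefore remains in the spanning set. For the mixed-sign commutators $[X^-_{i,s},X_{p,q,r}]$ with $X_{p,q,r}$ of positive type (and the symmetric case), write the outer layer of $X_{p,q,r}$ as $[X^+_{u,0},Y]$ with $Y$ of strictly smaller depth and apply the Jacobi identity
\[
[X^-_{i,s},X_{p,q,r}] = [[X^-_{i,s},X^+_{u,0}],Y]+[X^+_{u,0},[X^-_{i,s},Y]].
\]
The first bracket reduces via \eqref{Eq901-2}--\eqref{Eq901-8} to an $H_{j,s}$, a scalar multiple of $X^\pm_{n+1,2r+1}$, or zero, after which $[H_{j,s},Y]$ is handled by the previous step and the $X^\pm_{n+1}$-case is deferred to below; the second bracket is handled by the induction hypothesis on the depth of $Y$.

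The cases $[X^\pm_{n+1,s},X_{p,q,r}]$ are reduced similarly: relations \eqref{Eq901-11}--\eqref{Eq901-16} give closed formulas for the action of $X^\pm_{n+1,2r+1}$ on the generators $X^\pm_{j,s}$, and the Jacobi identity together with the definitions \eqref{9009} and \eqref{9011} expresses everything in the desired form, once again inducting on depth. The main obstacle is bookkeeping: the matrix $a^r_{i,j}$ and several of the defining relations split according to the parity of $r+s$ and according to whether the interacting indices lie in $\{n-2,n-1,n,n+1\}$ or not, so each Jacobi reduction must be verified in the even and odd cases separately, and the target element $X_{a,b,t}$ must be identified with the correct sign convention from \eqref{9001}--\eqref{9012}. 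Organising the induction so that each sub-case terminates in an element of the spanning set $\{H_{i,r},X^\pm_{n+1,2r+1},X_{p,q,r}\}$ is where most of the effort lies; once the parity cases are laid out, the proof is a mechanical Jacobi-identity verification.
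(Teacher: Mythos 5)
Your proposal follows essentially the same route as the paper: induction on the number of $\ad$-operators in the defining expression of $X_{p,q,r}$, with Lemma~\ref{gat} (resp.\ Lemma~\ref{gat-tui}) disposing of the same-sign case, the derivation property \eqref{Eq901-1} handling $[H_{i,s},X_{p,q,r}]$, and Jacobi-identity reductions via \eqref{Eq901-2}--\eqref{Eq901-8} and \eqref{Eq901-11}--\eqref{Eq901-16} for the mixed-sign and $X^\pm_{n+1,s}$ cases. The paper only writes out the $H_{i,s}$ case and declares the rest ``similar,'' so your sketch is, if anything, slightly more explicit about the remaining cases while being the same argument.
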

\begin{proof}
Suppose that $X_{p,q,r}=\prod_{i=1}^l\ad(X^+_{u_i,0})X^+_{u_{l+1},r_{l+1}}$. The $-$ case can be proven in the same way. As for $[X^+_{i,s},X_{p,q,r}]$ follows from Lemma~\ref{gat}.
We can prove other cases by the induction on $l$. We only show the statement for $[H_{i,s},X_{p,q,r}]$. Other cases can be proven by the similar way.

In the case that $l=0$, it is trivial. Suppose that the statement is true if $l\leq m$. For the case that $l=m+1$, by \eqref{Eq901-1}, we can rewrite $[H_{i,s},X_{p,q,r}]$ as a linear sum of the elements whose form is
\begin{equation*}
\prod_{i=1}^l\ad(X^\pm_{u_i,r_i})X^\pm_{u_{l+1},r_{l+1}}.
\end{equation*}
Thus, by the induction hypothesis and Lemma~\ref{gat}, we find that $[H_{i,s},X_{p,q,r}]$ can be spanned by  $X_{p,q,r}$.
\end{proof}
\begin{Theorem}\label{pre}
The universal enveloping algebra $U(\mathfrak{sl}(2n)[u]^\tau)$ is isomorphic to $L$.
\end{Theorem}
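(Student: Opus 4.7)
The strategy is to exhibit an algebra homomorphism $\Phi\colon L\to U(\mathfrak{sl}(2n)[u]^{\widetilde{\tau}})$ sending each labelled generator of $L$ to its namesake in the twisted current algebra, to prove $\Phi$ is surjective, and then to produce a PBW-style spanning set of $L$ whose image under $\Phi$ is already known to be linearly independent. Since the map is surjective and its domain is, filtration-piece by filtration-piece, not larger than its codomain, $\Phi$ will then have to be an isomorphism.

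The first step is to check that the assignment $H_{i,r}\mapsto h_{i,r}$, $X^\pm_{j,r}\mapsto x^\pm_{j,r}$ respects all of \eqref{Eq901}--\eqref{Eq903-8.5}. For the bilinear relations this is a direct computation using the explicit bracket formula for $f^r_{i,j}$ recorded in Section~3; the parity factor $(-1)^r$ appearing there is precisely what produces the shifted Cartan matrix $a^r_{i,j}$ in \eqref{Eq901-1}, the $X^\pm_{n+1,2r+1}$-corrections in \eqref{Eq901-5} and \eqref{Eq901-8}, and the identity \eqref{Eq901-9}. The Serre-type relations \eqref{Eq902}--\eqref{Eq903-8.5} are then verified by expanding the iterated adjoints on matrix units and using the $\tau$-eigenspace decomposition of $\mathfrak{sl}(2n)$.

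For surjectivity, the iterated adjoints
\[
X_{i,j,r},\ X_{i,-j,r},\ X_{j,i,r},\ X_{-i,j,r},\ X_{i,-i,2r+1},\ X_{-i,i,2r+1}
\]
defined in \eqref{9001}--\eqref{9012} lie in the image of $\Phi$ by construction, and the remark immediately after \eqref{9011} identifies their images with the spanning elements $f^r_{i,j}$ (up to explicit signs). Together with the images of the Cartan generators $H_{i,r}$ they exhaust a generating set of $U(\mathfrak{sl}(2n)[u]^{\widetilde{\tau}})$, so $\Phi$ is surjective.

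Injectivity is where the real work lies. I would let $L^+$, $L^-$, $L^0$ denote the subalgebras of $L$ generated by the $X^+$'s, $X^-$'s and $H$'s respectively. Relation \eqref{Eq901} forces $L^0$ to be commutative and hence spanned by ordered monomials in the $H_{i,r}$. Iterating Lemma~\ref{gat}, every word in the $X^+$-generators collapses to a $\mathbb{C}$-linear combination of ordered monomials in $\{X_{i,j,r},X_{i,-j,r},X_{i,-i,2r+1}\}$, and Lemma~\ref{gat-tui} handles $L^-$ symmetrically. Lemma~\ref{indu1} together with the cross relations \eqref{Eq901-2}--\eqref{Eq901-9} then upgrades these one-sided results to a triangular spanning $L = L^-\cdot L^0\cdot L^+$. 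The image of such an ordered monomial under $\Phi$ is, up to sign, the corresponding PBW monomial in $\{f^r_{i,j}\}$ in $U(\mathfrak{sl}(2n)[u]^{\widetilde{\tau}})$, and these monomials are linearly independent there; hence the spanning monomials of $L$ must themselves be linearly independent, giving injectivity. The delicate point is ensuring that every cross bracket between an $X^+$ and an $X^-$ re-enters the prescribed spanning set, producing at most an $X^\pm_{n+1,2r+1}$ correction and never a spurious ``higher'' element; this is exactly what the elaborate Serre-type relations \eqref{Eq902}--\eqref{Eq903-8.5} near the branch vertices $n-1,n$ are engineered to enforce, and carrying out the induction that confirms it is where I expect the bulk of the technical effort to reside.
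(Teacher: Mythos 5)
Your proposal is correct and follows essentially the same route as the paper: a surjective homomorphism $L\to U(\mathfrak{sl}(2n)[u]^{\widetilde{\tau}})$ sending generators to their namesakes, combined with the fact that $L$ is spanned by ordered monomials in $H_{i,r}$, $X^\pm_{n+1,2r+1}$ and the iterated adjoints $X_{p,q,r}$ (via Lemmas~\ref{gat}, \ref{gat-tui} and \ref{indu1}), whose images form a PBW basis of the target, forcing injectivity. The paper's proof is exactly this argument in compressed form.
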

\begin{proof}
By replacing $x$ with $X$, we define $x_{p,q,r}$ for $p\neq q$.
There exists a natural surjective homomorphism 
\begin{equation*}
\rho\colon L\to U(\mathfrak{sl}(2n)[u]^\tau)
\end{equation*}
by $X^\pm_{i,r}\mapsto x^\pm_{i,r}$ and $H_{i,r}\mapsto h_{i,r}$. By the definition, we find that $\mathfrak{sl}(2n)[u]^\tau$ has a basis which consists of $h_{i,r}$, $x^\pm_{n+1,2r+1}$ and $x_{p,q,r}$. By the PBW theorem, the ordered monomials of $h_{i,r}$, $x^\pm_{n+1,2r+1}$ and $x_{p,q,r}$ become a basis of $U(\mathfrak{sl}(2n)[u]^\tau)$. Thus, it is enough to show that the ordered monomials of $H_{i,r},X^\pm_{n+1,2r+1}$ and $X^\pm_{p,q,r}$ form the spanning set of $L$.
This follows from Lemma~\ref{indu1} and \eqref{Eq901-1-1}.
\end{proof}
\section{Finite presentation of $U(\mathfrak{sl}(2n)[u]^\tau)$}
One of the difficulty of the presentation given in Theorem~\ref{pre} is that the number of generators is infinite. In this section, we give a presentation of $U(\mathfrak{sl}(2n)[u]^\tau)$ with finite generators.
\begin{Theorem}\label{Mini}
The universal enveloping algebra $U(\mathfrak{sl}(2n)[u]^\tau)$ is isomorphic to the associative algebra generated by
\begin{equation*}
\{H_{i,0},H_{j,1},X^\pm_{i,r}\mid 1\leq i\leq n,1\leq j\leq n-1,r=0,1\}
\end{equation*}
with the commutator relations:
\begin{gather}
[H_{i,r},H_{j,s}]=0\text{ if }0\leq r,s\leq1,\label{eq901}\\
[H_{i,r},X^\pm_{j,0}]=\pm a^{r}_{i,j}X^\pm_{j,r}\text{ for }r=0,1,\label{eq901-1}\\
[X^+_{i,0},X^-_{j,0}]=\delta_{i,j}H_{i,0},\label{eq901-2--1},\\
[X^+_{i,1},X^-_{i,0}]=\delta_{i,j}H_{i,1}\text{ if }i\neq n,\label{eq901-2-0}\\
[[X^+_{n-1,2},X^-_{n-1,0}],X^\pm_{n,0}]=0,\label{eq901-2-1}\\
[X^+_{n,1},X^-_{n,0}]=H_{n-1,1}\label{eq901-3-1}\\
[X^\pm_{i,1},X^\pm_{i,0}]=[X^\pm_{i,0},X^\pm_{i,1}],\label{eq901-10-1}\\
[X^\pm_{i,0},X^\pm_{j,0}]=0\text{ if }a_{i,j}=0,\label{eq901-17}\\
[X^\pm_{i,0},[X^\pm_{i,0},X^\pm_{j,0}]]=0\text{ if }a_{i,j}=-1,\label{eq902}\\
[X^\pm_{n,1},[X^\pm_{n-2,0},X^\pm_{n-1,0}]]=-[X^\pm_{n-1,0},[X^\pm_{n-2,0},X^\pm_{n,1}]],\label{eq903-1}\\
[[X^\pm_{n,0},X^\pm_{n-2,0}],[X^\pm_{n,0},X^\pm_{n-1,1}]]=0,\label{eq903-3}\\
[[X^\pm_{n-1,0},X^\pm_{n-2,0}],[X^\pm_{n-1,0},X^\pm_{n,1}]]=0.\label{eq903-7}
\end{gather}
\end{Theorem}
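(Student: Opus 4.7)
Let $L'$ denote the finitely presented algebra defined in the statement. The plan is to construct mutually inverse homomorphisms between $L'$ and the algebra $L$ of the previous section, which by Theorem~\ref{pre} is $U(\mathfrak{sl}(2n)[u]^{\widetilde{\tau}})$. This follows the standard paradigm of Guay--Nakajima--Wendlandt \cite{GNW} for minimalistic presentations of Yangian-type algebras: the low-degree Chevalley-type generators together with a handful of low-degree Serre and shift relations will force all higher-degree generators and relations.

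\textbf{Forward direction.} For the natural map $\phi\colon L'\to L$ sending each generator to the element of the same name, every relation of $L'$ must be checked inside $L$. Relations \eqref{eq901}--\eqref{eq901-3-1} and \eqref{eq901-17}--\eqref{eq903-7} are either immediate specializations at $r,s\in\{0,1\}$ of relations in $L$, or follow from the Jacobi identity combined with such specializations. The one non-obvious verification is \eqref{eq901-2-1}: one first expresses $X^+_{n-1,2}$ inside $L$ via $2X^+_{n-1,2}=[H_{n-1,1},X^+_{n-1,1}]$ using \eqref{Eq901-1}, and then reduces the outer double bracket using \eqref{Eq901-1}, \eqref{Eq901-1-1} and \eqref{Eq901-2}.

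\textbf{Reverse direction.} The substantive work is to construct a surjection $\psi\colon L\twoheadrightarrow L'$. For this one must define, inside $L'$, elements $H_{i,r}$, $X^\pm_{i,r}$ for all $r\geq 0$ and $X^\pm_{n+1,2r+1}$, and verify all relations of $L$. The definitions are forced by \eqref{Eq901-1}, \eqref{Eq901-1-1}, \eqref{Eq901-2}, \eqref{Eq901-5} and \eqref{Eq901-8}: for $1\leq i\leq n-1$ set $X^\pm_{i,r+1}:=\pm\tfrac12[H_{i,1},X^\pm_{i,r}]$; set $X^\pm_{n,r+1}:=\pm[H_{n-1,1},X^\pm_{n,r}]$ (here $a^1_{n-1,n}=1$); set $H_{i,r+s}:=[X^+_{i,r},X^-_{i,s}]$ for a convenient choice of split; and set $X^+_{n+1,2r+1}:=-[X^+_{n,2r+1},X^-_{n-1,0}]$ with the analogous formula for $X^-_{n+1,2r+1}$. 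One then verifies every relation of $L$ inside $L'$ by induction on the total degree, leveraging the shift relation \eqref{eq901-10-1} and the above recursions to reduce each higher-degree relation to a low-degree one already available.

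\textbf{Main obstacle.} Two technical points dominate the work. First, one must show that $H_{i,r+s}$ is independent of the split, i.e.\ $[X^+_{i,r_1},X^-_{i,s_1}]=[X^+_{i,r_2},X^-_{i,s_2}]$ whenever $r_1+s_1=r_2+s_2$; this is handled by applying $\ad(H_{j,1})$ to a bracket of lower total degree and using \eqref{eq901-10-1} to shift an index from one factor to the other. Second, and more seriously, one must bootstrap all the higher-degree Serre-type relations \eqref{Eq901-17-2}--\eqref{Eq903-8.5} from only their low-degree counterparts \eqref{eq902}, \eqref{eq903-1}, \eqref{eq903-3} and \eqref{eq903-7}. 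This proceeds by induction, raising one index at a time via $\ad(H_{j,1})$ and carefully tracking the contribution of the twisted Cartan entry $a^1_{n-1,n}=1$, which is precisely what forces the extra relations concentrated at the branching node $n$. Analogues of Lemmas~\ref{gat}, \ref{gat-tui} and \ref{indu1} carried out inside $L'$ will supply the structural control over iterated brackets that is needed to close the induction.
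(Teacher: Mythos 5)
Your strategy coincides with the paper's: the easy direction is the natural map into $L\cong U(\mathfrak{sl}(2n)[u]^{\widetilde{\tau}})$ of Theorem~\ref{pre}, and the substantive direction defines $X^\pm_{i,r}$, $H_{i,r}$ and $X^\pm_{n+1,2r+1}$ for all $r$ inside the finitely presented algebra by exactly the recursions you write down and then verifies every defining relation of $L$ by induction on degree, raising indices with $\ad(H_{j,1})$ and, where $H_{n,1}$ is unavailable, with $\ad(H_{n,2})$ and $\ad(H_{n-1,2})$. Two small corrections: $a^1_{n-1,n}=a_{n-1,n}+2=0+2=2$, not $1$, so the recursion at the branch node is $X^\pm_{n,r+1}=\pm\tfrac{1}{2}[H_{n-1,1},X^\pm_{n,r}]$ just as at the other nodes; and the paper does not redo analogues of Lemmas~\ref{gat}--\ref{indu1} inside the new algebra, but instead disposes of all relations supported on the two type-$A_{n-1}$ subdiagrams $\{1,\dots,n-1\}$ and $\{1,\dots,n-2,n\}$ in one stroke by citing Theorem~2.13 of \cite{GNW} (Lemma~\ref{Lem1}), so that only the relations genuinely coupling nodes $n-1$ and $n$ need to be bootstrapped by hand.
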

This section is devoted to the proof of Theorem~\ref{Mini}. We define $X^\pm_{i,s},H_{i,2s-2},H_{j,2s-1}$ for $s\geq 2$, $1\leq i\leq n$ and $1\leq j\leq n-1$ inductively by
\begin{gather*}
X^\pm_{i,s}=\begin{cases}
\dfrac{1}{2}[H_{i,1},X^\pm_{i,s-1}]&\text{ if }1\leq i\leq n-1,\\
\dfrac{1}{2}[H_{n-1,1},X^\pm_{n,s-1}]&\text{ if }i=n,
\end{cases}\\
H_{i,2s-2}=[X^\pm_{i,2s-2},X^-_{i,0}],\ H_{j,2s-1}=[X^+_{j,2s-1},X^-_{j,0}].
\end{gather*}
We also define
\begin{align*}
X^+_{n+1,2r+1}&=-[X^+_{n,2r+1},X^-_{n-1,0}],\ 
X^-_{n+1,2r+1}=-[X^+_{n-1,2r+1},X^-_{n,0}].
\end{align*}
By the definition of $H_{i,2s-2}$, we can rewrite \eqref{eq901-2-1} as
\begin{align}
[H_{n-1,2},X^\pm_{n,0}]=0.\label{eqeq0-1}.
\end{align}
We denote the associative algebra defined by \eqref{eq901}-\eqref{eq903-7} by $\widetilde{L}$.
\begin{Lemma}\label{Lem-1}
There exists a homomorphism from $U(\mathfrak{so}(2n))$ to $\widetilde{L}$ given by $h_i\mapsto H_{i,0}$ and $x^\pm_i\mapsto X^\pm_{i,0}$.
\end{Lemma}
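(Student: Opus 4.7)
The plan is to apply the Chevalley--Serre presentation of $U(\mathfrak{so}(2n))$. Since $\mathfrak{so}(2n)$ is the simple Lie algebra of type $D_n$, its universal enveloping algebra is presented by generators $\{h_i,x^\pm_i\}_{1\le i\le n}$ subject to the standard Chevalley--Serre relations determined by the Cartan matrix $(a_{i,j})$ recorded in Section~2. Hence it suffices to verify that the images $H_{i,0}$ and $X^\pm_{i,0}$ in $\widetilde{L}$ satisfy those relations, and then appeal to the universal property to obtain the homomorphism.

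Each Chevalley--Serre relation matches, one for one, with the degree-zero specialization of a defining relation of $\widetilde{L}$. Concretely: $[H_{i,0},H_{j,0}]=0$ is \eqref{eq901} at $r=s=0$; the weight relation $[H_{i,0},X^\pm_{j,0}]=\pm a_{i,j}X^\pm_{j,0}$ is \eqref{eq901-1} at $r=0$, using the identity $a^0_{i,j}=a_{i,j}$ from the definition in Section~3; the Cartan commutator $[X^+_{i,0},X^-_{j,0}]=\delta_{i,j}H_{i,0}$ is exactly \eqref{eq901-2--1}; the trivial Serre relations $[X^\pm_{i,0},X^\pm_{j,0}]=0$ for $a_{i,j}=0$ are \eqref{eq901-17}, which in particular covers the Dynkin-disconnected pair $(n-1,n)$ since $a_{n-1,n}=0$; and the cubic Serre relations $\ad(X^\pm_{i,0})^2(X^\pm_{j,0})=0$ for $a_{i,j}=-1$ are \eqref{eq902}. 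Hence the universal property of the Chevalley--Serre presentation yields the desired algebra map $U(\mathfrak{so}(2n))\to \widetilde{L}$ sending $h_i$ to $H_{i,0}$ and $x^\pm_i$ to $X^\pm_{i,0}$.

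There is essentially no obstacle at this stage: the lemma is an accounting step verifying that the degree-zero defining relations of $\widetilde{L}$ reproduce every Chevalley--Serre relation for type $D_n$. None of the auxiliary generators $H_{j,1}$, $X^\pm_{i,1}$, or the derived elements $X^\pm_{n+1,2r+1}$, and none of the higher relations such as \eqref{eq901-2-1}, \eqref{eq903-1}, \eqref{eq903-3}, \eqref{eq903-7}, are needed here; they will only enter later when one manipulates higher-degree elements and must check that the inductively defined $X^\pm_{i,s}$, $H_{i,r}$ satisfy the full relations \eqref{Eq901}--\eqref{Eq903-8.5} of $L$.
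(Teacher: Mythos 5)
Your proof is correct and is essentially the paper's own argument: the paper disposes of this lemma by simply citing \eqref{eq901}, \eqref{eq901-1}, \eqref{eq901-2--1}, \eqref{eq901-17} and \eqref{eq902}, which is exactly the relation-by-relation matching with the Chevalley--Serre presentation that you carry out (including the correct observation that $a_{n-1,n}=0$ in this labeling of the $D_n$ diagram).
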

This naturally follows from \eqref{eq901}, \eqref{eq901-1}, \eqref{eq901-2--1}, \eqref{eq901-17} and \eqref{eq902}.

\begin{Lemma}\label{Lem0}
\textup{(1)}\ We obtain
\begin{gather}
[X^+_{i,r},X^-_{i,s}]=\delta_{i,j}H_{i,r+s}\text{ if }r+s\leq 1,1\leq i,j\leq n-1,\label{eq901-2}
\end{gather}
\textup{(2)}\ The following relations hold:
\begin{gather}
[X^+_{n,1},X^-_{n-1,0}]=[X^+_{n,0},X^-_{n-1,1}],\label{eq901-5}\\
[X^+_{n-1,1},X^-_{n,0}]=[X^+_{n-1,0},X^-_{n,1}].\label{eq901-8}
\end{gather}
\textup{(3)}\ We obtain
\begin{gather}
[X^+_{n,r},X^-_{j,s}]=\delta_{n,j}H_{n-1,1}\text{ if }j\neq n-1,n+1\text{ and }r+s=1,\label{eq901-3}\\
[X^+_{j,r},X^-_{n,s}]=\delta_{n,j}H_{n,1},\text{ if }j\neq n-1,n+1\text{ and }r+s=1.\label{eq901-6}
\end{gather}
\textup{(4)}\ The following relation hold:
\begin{gather}
[X^\pm_{i,1},X^\pm_{j,0}]=[X^\pm_{i,0},X^\pm_{j,1}]\text{ for }1\leq i,j\leq n\text{ and }(i,j)\neq(n-1,n),(n,n-1).\label{eq901-10}
\end{gather}
\end{Lemma}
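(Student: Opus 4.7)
My plan is to verify parts (1)--(4) by direct computation in $\widetilde{L}$, using the Jacobi identity together with the Cartan action \eqref{eq901-1} and the Serre relations \eqref{eq901-17}--\eqref{eq902}. A uniform trick is to write $X^\pm_{i,1}=\tfrac{1}{2}[H_{i,1},X^\pm_{i,0}]$ for $1\leq i\leq n-1$ (since $a^1_{i,i}=2$) and $X^\pm_{n,1}=[H_{n-1,1},X^\pm_{n,0}]$ (since $a^1_{n-1,n}=1$); the latter substitution is what I use whenever the generator $H_{n,1}$ would otherwise be needed but is unavailable.

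For part (1), the cases $r=s=0$ and $(r,s,i)=(1,0,j)$ are exactly \eqref{eq901-2--1} and \eqref{eq901-2-0}. For the remaining case $(r,s)=(0,1)$, $i=j$, I apply Jacobi to $[H_{i,1},[X^+_{i,0},X^-_{i,0}]]=[H_{i,1},H_{i,0}]=0$ and substitute \eqref{eq901-1}, obtaining $0=2[X^+_{i,1},X^-_{i,0}]-2[X^+_{i,0},X^-_{i,1}]$; combined with \eqref{eq901-2-0} this gives the identity. For $i\neq j$ with $r+s=1$, expanding both sides via $X^\pm_{\bullet,1}=\tfrac12[H_{\bullet,1},X^\pm_{\bullet,0}]$ and using $[X^+_{i,0},X^-_{j,0}]=0$ yields a self-consistency equation of the form $[X^+_{i,1},X^-_{j,0}]=-\tfrac{a_{i,j}^2}{4}[X^+_{i,1},X^-_{j,0}]$, forcing both commutators to vanish. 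For part (2), I apply the same technique to $[H_{n-2,1},[X^+_{n,0},X^-_{n-1,0}]]=0$; the coincidence $a^1_{n-2,n}=a^1_{n-2,n-1}=-1$ is exactly what makes the two resulting terms cancel correctly. Part (3) is handled analogously: the $j=n$ case uses $[H_{n-1,1},[X^+_{n,0},X^-_{n,0}]]=[H_{n-1,1},H_{n,0}]=0$ together with \eqref{eq901-3-1}; the $j\notin\{n-1,n,n+1\}$ case follows by a self-consistency argument analogous to the $i\neq j$ case of part (1).

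The main obstacle is part (4) with $a_{i,j}=-1$. The case $i=j$ is \eqref{eq901-10-1} by antisymmetry, and for $a_{i,j}=0$ one applies $\ad(H_{i,1})$ to $[X^+_{i,0},X^+_{j,0}]=0$ (by \eqref{eq901-17}) and uses $a^1_{i,j}=0$ to isolate both $[X^+_{i,1},X^+_{j,0}]$ and $[X^+_{i,0},X^+_{j,1}]$ as zero. For $a_{i,j}=-1$, set $A=[X^+_{i,1},X^+_{j,0}]$ and $B=[X^+_{i,0},X^+_{j,1}]$; a naive Jacobi with a single Cartan element yields only $2A-B=[H_{i,1},[X^+_{i,0},X^+_{j,0}]]$ and the symmetric equation with $i,j$ swapped, which alone are insufficient to conclude $A=B$. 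My plan is to apply $\ad(H_{i,1})$ and $\ad(H_{j,1})$ separately to \eqref{eq902} to derive the higher Serre identities $[X^+_{i,1},[X^+_{i,0},X^+_{j,0}]]=0$ and $[X^+_{i,0},[X^+_{i,0},X^+_{j,1}]]=0$, which via \eqref{eq901-10-1} translate into $[X^+_{i,0},A-B]=0$; a parallel computation using part (1) gives $[X^-_{i,0},A-B]=0$. Then the Jacobi identity
\[
[H_{i,0},A-B]=[[X^+_{i,0},X^-_{i,0}],A-B]=0
\]
contradicts the direct weight computation $[H_{i,0},A-B]=(2+a_{i,j})(A-B)=A-B$ unless $A=B$. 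When one index equals $n$ I substitute $\ad(H_{n-1,1})$ for the missing $\ad(H_{n,1})$: the coefficients $a^1_{n-1,n-2}=-1$ and $a^1_{n-1,n}=2$ play exactly the roles that $a_{n,n-2}=-1$ and $a_{n,n}=2$ would play in the argument.
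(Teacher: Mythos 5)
Your proposal is correct, and parts (1)--(3) follow essentially the paper's own route: the paper also applies $\ad(H_{k,1})$ to $[X^+_{i,0},X^-_{j,0}]=\delta_{i,j}H_{i,0}$ and exploits either the invertibility of the $2\times2$ Cartan block (for $i\neq j$) or the coincidence $a^1_{n-2,n}=a^1_{n-2,n-1}=-1$ (for part (2)); your ``self-consistency'' reformulation $[X^+_{i,1},X^-_{j,0}]=\tfrac{a_{i,j}^2}{4}[X^+_{i,1},X^-_{j,0}]$ is the same linear algebra in disguise. The genuine divergence is in part (4) for $a_{i,j}=-1$. The paper's argument is a one-liner: bracket the Serre relation $[X^+_{i,0},[X^+_{i,0},X^+_{j,0}]]=0$ with $X^-_{i,1}$ and use $[X^+_{i,0},X^-_{i,1}]=H_{i,1}$, $[X^+_{j,0},X^-_{i,1}]=0$ from parts (1) and (3) to land directly on $2[X^+_{i,1},X^+_{j,0}]-2[X^+_{i,0},X^+_{j,1}]=0$. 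Your route --- first killing $[X^\pm_{i,0},A-B]$ via $\ad(H_{i,1})$, $\ad(H_{j,1})$ applied to the Serre relation and via part (1), then playing $[H_{i,0},A-B]=[[X^+_{i,0},X^-_{i,0}],A-B]=0$ against the weight $2+a_{i,j}=1$ --- is longer but equally valid, and arguably more robust since it only ever brackets with degree-$0$ elements; the price is that you must separately verify $[H_{i,0},X^+_{j,1}]=a_{i,j}X^+_{j,1}$ (which follows from $[H_{i,0},H_{j,1}]=0$ and the inductive definition of $X^+_{j,1}$) before the weight computation is legitimate. Two normalization slips should be fixed but affect nothing: $a^1_{n-1,n}=2$ (not $1$), so $X^\pm_{n,1}=\tfrac12[H_{n-1,1},X^\pm_{n,0}]$ as in the paper (you use the correct value $2$ later anyway), and $X^-_{i,1}=-\tfrac12[H_{i,1},X^-_{i,0}]$ carries a minus sign; since every conclusion you draw is of the form ``two brackets agree'' or ``a bracket vanishes,'' these constants cancel out of all your arguments.
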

\begin{proof}
\textup{(1)}\ In the case that $1\leq i,j\leq n-1$ and $i\neq j$, by \eqref{eq901-1}, we have
\begin{gather*}
0=[H_{i,1},[X^+_{i,0},X^-_{j,0}]]=a_{i,i}[X^+_{i,1},X^+_{j,0}]-a_{i,j}[X^+_{i,0},X^+_{j,1}],\\
0=[H_{j,1},[X^+_{i,0},X^-_{j,0}]]=a_{i,j}[X^+_{i,1},X^+_{j,0}]-a_{j,j}[X^+_{i,0},X^+_{j,1}].
\end{gather*}
Since the matrix $\begin{pmatrix}
a_{i,i} & a_{i,j} \\
a_{i,j} & a_{j,j} \\
\end{pmatrix}$
is invertible, we find the relation $[X^+_{i,r},X^+_{j,1-r}]=0$. By \eqref{eq901} and \eqref{eq901-1}, we obtain
\begin{gather*}
0=[H_{i,1},H_{i,0}]=[H_{p,1},[X^+_{i,0},X^-_{i,0}]]=a_{i,i}([X^+_{i,1},X^-_{i,0}]-[X^+_{i,0},X^-_{i,1}])
\end{gather*}
Thus, we find that $[X^+_{i,0},X^-_{i,1}]=[X^+_{i,1},X^-_{i,0}]=H_{i,1}$ holds.

We can prove (2) and (3) by the similar way to (1).
\begin{comment}
\textup{(2)}\ 
\begin{equation*}
0=[H_{n-2,1},[X^+_{n,0},X^-_{n-1,0}]]=-[X^+_{n,1},X^-_{n-1,0}]+[X^+_{n,0},X^-_{n-1,1}].
\end{equation*}
\end{comment}

\textup{(4)}\ We only consider the $+$ case. The $-$ case can be proven in the same way. We need to show the case that $i\neq j$. In the case that $a_{i,j}=0$, $i\neq n$ and $(i,j)\neq (n-1,n),(n,n-1)$, we can choose $k$ such that $a^1_{i,k}\neq0$ and $a^1_{j,k}=0$. Then, we obtain
\begin{align*}
[X^+_{i,1},X^+_{j,0}]&=\dfrac{1}{a^1_{i,k}}[H_{k,1},[X^+_{i,0},X^+_{j,0}]]=0.
\end{align*}
In the case that $a_{i,j}=-1$, we have
\begin{align*}
0=[[X^+_{i,0},[X^+_{i,0},X^+_{j,0}]],X^-_{i,1}]&=[X^+_{i,0},[H_{i,1},X^+_{j,0}]]+[H_{i,1},[X^+_{i,0},X^+_{j,0}]]\\
&=-[X^+_{i,0},X^+_{j,1}]+2[X^+_{i,1},X^+_{j,0}]-[X^+_{i,0},X^+_{j,1}]
\end{align*}
where we consider $H_{n,1}=H_{n-1,1}$. Thus, we have proved the relation.
\end{proof}

\begin{Lemma}\label{Lem1}
For $1\leq i,j\leq n$ and $(i,j)\neq(n-1,n),(n,n-1)$, the relations \eqref{Eq901}-\eqref{Eq901-3}, \eqref{Eq901-10}, \eqref{Eq901-17} and \eqref{Eq902} are obtained. Moreover, we obtain the relation
\begin{equation}
[H_{n,2r},H_{n-1,2s+1}]=0.\label{eqeq0-0}
\end{equation}
\end{Lemma}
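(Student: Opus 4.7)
The plan is to prove the listed relations simultaneously by induction on $N=r+s$ (and on $N=r_1+r_2+s$ for Serre-type relations), using the base cases $N\leq 1$ supplied by Theorem~\ref{Mini} and Lemma~\ref{Lem0}. The central tools are the recursions
$$X^\pm_{i,s}=\tfrac{1}{2}[H_{i',1},X^\pm_{i,s-1}],\qquad H_{j,s}=[X^+_{j,s},X^-_{j,0}],$$
where $i'=i$ for $i<n$ and $i'=n-1$ for $i=n$ (with the obvious parity conventions for $H_{j,s}$). Combined with the Jacobi identity, each recursion lets one raise the index on a higher-degree factor and reduce the claim to strictly lower degree.

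Within each inductive level $N$, I would establish the relations in the following order. \textbf{(i)} Cartan action \eqref{Eq901-1} for $j\neq n+1$: expand
$$[H_{i,r},X^\pm_{j,s}]=\tfrac{1}{2}\bigl[H_{i,r},[H_{j',1},X^\pm_{j,s-1}]\bigr],$$
apply Jacobi, and note that $[H_{i,r},H_{j',1}]=0$ follows from the inductive hypothesis on \eqref{Eq901} (since $r+1\leq N-1$); the remaining term yields $\pm a^r_{i,j}X^\pm_{j,r+s}$ by a secondary induction on~(i) itself. \textbf{(ii)} Mixed-sign relations \eqref{Eq901-2}, \eqref{Eq901-3}: expand $X^+_{i,r}=\tfrac{1}{2}[H_{i',1},X^+_{i,r-1}]$, apply Jacobi, and use~(i); for $(i,j)=(n,n)$ the expected parity switch $H_n\leftrightarrow H_{n-1}$ on the right-hand side arises automatically from the recursion through $H_{n-1,1}$. \textbf{(iii)} Same-sign relations \eqref{Eq901-10}, \eqref{Eq901-17}, and the Serre relation \eqref{Eq902}: apply the same recursion-plus-Jacobi technique, reducing via~(i) and the inductive hypothesis. \textbf{(iv)} Cartan--Cartan relation \eqref{Eq901}: expand $H_{j,s}=[X^+_{j,s},X^-_{j,0}]$, apply Jacobi, and use~(i) together with~(ii).

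The distinguished identity \eqref{eqeq0-0} is in fact immediate at step~(iv): since $a_{n,n-1}=0$ in type $D_n$, step~(i) yields $[H_{n,2r},X^\pm_{n-1,s}]=0$, and Jacobi applied to $\bigl[H_{n,2r},[X^+_{n-1,2s+1},X^-_{n-1,0}]\bigr]$ gives $0$ from both terms. It is highlighted separately because $H_{n,r}$ exists only for even $r$ while $H_{n-1,s}$ is accessed for odd $s$, so the cancellation is not visible from the minimalistic defining relations without this Jacobi computation.

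The main obstacle is the parity-dependent structure at the fork nodes $n-1,n$: the element playing the role of ``Cartan at node $n$'' toggles between $H_{n,r}$ (even $r$) and $H_{n-1,r}$ (odd $r$), while $a^r_{n-1,n}$ jumps between $-1$ (even $r$) and $+1$ (odd $r$). Consequently every inductive step involving indices in $\{n-1,n\}$ splits into parity cases, and one must verify that the resulting Jacobi cancellations are consistent with the asymmetric minimalistic axioms \eqref{eq901-3-1}, \eqref{eq903-1}, and \eqref{eqeq0-1}. Arranging the induction order so that each previously invoked relation is already available at the needed degree is the delicate bookkeeping part of the argument.
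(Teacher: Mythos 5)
Your overall strategy --- one global simultaneous induction on degree that re-derives the full list of relations from the minimalistic ones --- is quite different from the paper's, which observes that every pair $(i,j)\neq(n-1,n),(n,n-1)$ lies in one of the two type-$A_{n-1}$ sub-diagrams $\{1,\dots,n-1\}$ and $\{1,\dots,n-2,n\}$ (the latter after relabelling the Cartan element at node $n$ by $h_{n,r}:=H_{n-1,r}$ for odd $r$), and then simply invokes Theorem~2.13 of \cite{GNW} inside each sub-diagram. In that reading, \eqref{eqeq0-0} is the \emph{diagonal} $(n,n)$ instance of \eqref{Eq901} in the relabelled subalgebra and costs nothing. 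Your induction instead has to cross the fork between nodes $n-1$ and $n$, and that is where it breaks down.

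Concretely there are two problems. First, an ordering circularity: in your step (i) with $s=1$ the term $\bigl[[H_{i,r},H_{j',1}],X^\pm_{j,0}\bigr]$ involves a Cartan--Cartan bracket of total degree $r+1=N$, which your scheme only establishes at step (iv) of level $N$. More seriously, the recursion $X^\pm_{n,s}=\frac{1}{2}[H_{n-1,1},X^\pm_{n,s-1}]$ forces you, even for the in-scope pair $(i,j)=(n,n)$, to know $[H_{n,2r},H_{n-1,1}]=0$ --- that is, \eqref{eqeq0-0} itself --- before step (i) can run; yet your proposed proof of \eqref{eqeq0-0} invokes step (i) for the pair $(n,n-1)$. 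That pair is excluded from the lemma for a reason: $[H_{n,2r},X^\pm_{n-1,s}]=0$ is \emph{not} a consequence of ``$a_{n,n-1}=0$ plus induction.'' Unwinding it via $H_{n,2r}=[X^+_{n,2r},X^-_{n,0}]$ requires $[X^+_{n,2r},X^+_{n-1,0}]=0$, an even-parity fork relation that the paper only obtains later (see \eqref{eqeq1}, \eqref{eqeq0} and \eqref{Eq901-17-1}) and whose proof genuinely uses the extra axiom \eqref{eq901-2-1} through \eqref{eqeq0-1} and \eqref{LemLem}. So \eqref{eqeq0-0} is not ``immediate'' in your setup. The missing idea is to run the argument sub-diagram by sub-diagram (equivalently, to perform the relabelling $h_{n,2s+1}=H_{n-1,2s+1}$), after which everything, including \eqref{eqeq0-0}, is a statement internal to a type-$A$ current algebra and follows from the known minimalistic-presentation theorem.
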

\begin{proof}
In the case that $i,j\in\{1,2,\cdots,n\}\setminus\{n\}$, the relations \eqref{Eq901}-\eqref{Eq901-3}, \eqref{Eq901-10}, \eqref{Eq901-17} and \eqref{Eq902} are nothing but the defining relations of the Yangian associated with $\mathfrak{sl}(n)$. Thus, by Theorem~2.13 in Guay-Nakajima-Wendlandt \cite{GNW}, the relations \eqref{Eq901}-\eqref{Eq901-3}, \eqref{Eq901-10}, \eqref{Eq901-17} and \eqref{Eq902} hold. 

In the case that $i,j\in\{1,2,\cdots,n\}\setminus\{n-1\}$, let us set
\begin{align*}
h_{i,r}=\begin{cases}
H_{n-1,r}&\text{ if }i=n,r\text{ is odd},\\
H_{i,r}&\text{ otherwise}.
\end{cases}
\end{align*}
Then, the generators $\{h_{i,r},X^\pm_{i,r}\mid i\in\{1,\cdots,n\}\setminus\{n-1\}\}$ and the relations \eqref{Eq901}-\eqref{Eq901-3}, \eqref{Eq901-10}, \eqref{Eq901-17} and \eqref{Eq902} define the Yangian associated with $\mathfrak{sl}(n)$. Thus, by Theorem~2.13 in Guay-Nakajima-Wendlandt \cite{GNW}, the relations \eqref{Eq901}-\eqref{Eq901-3}, \eqref{Eq901-10}, \eqref{Eq901-17} and \eqref{Eq902} hold. 
\end{proof}
\begin{Lemma}
The following relations hold for $r+s=2$:
\begin{equation}
[X^+_{n,r},X^-_{n-1,s}]=0,\ [X^-_{n,r},X^+_{n-1,s}]=0.\label{LemLem}
\end{equation}
\end{Lemma}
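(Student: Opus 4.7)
The plan is to establish the three subcases $(r,s)\in\{(0,2),(2,0),(1,1)\}$ of the first relation $[X^+_{n,r},X^-_{n-1,s}]=0$ in that order; the second relation $[X^-_{n,r},X^+_{n-1,s}]=0$ then follows by a parallel argument under the symmetry exchanging $+$ and $-$ (using \eqref{eq901-8} in place of \eqref{eq901-5} and the definition $X^-_{n+1,1}=-[X^+_{n-1,1},X^-_{n,0}]$ in place of $X^+_{n+1,1}$). The principal tools are the defining relation \eqref{eq901-2-1}---equivalently $[H_{n-1,2},X^\pm_{n,0}]=0$---together with Lemma~\ref{Lem1}, which provides $[H_{i,r},X^\pm_{j,s}]=\pm a^{r}_{i,j}X^\pm_{j,r+s}$ whenever the pair $(i,j)$ avoids the exceptional set $\{(n-1,n),(n,n-1)\}$.

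For the $(0,2)$ case, I invoke Lemma~\ref{Lem1} applied to the type-$A$ subalgebra on $\{1,\ldots,n-1\}$ to rewrite $X^-_{n-1,2}$ as a scalar multiple of $[H_{n-1,2},X^-_{n-1,0}]$. Then $[X^+_{n,0},X^-_{n-1,2}]$ expands by Jacobi into two summands: one contains $[X^+_{n,0},H_{n-1,2}]$, which vanishes by \eqref{eq901-2-1}, while the other contains $[X^+_{n,0},X^-_{n-1,0}]$, which vanishes by \eqref{eq901-2--1}. For the $(2,0)$ case I apply $\ad(H_{n-2,2})$ to the vanishing bracket $[X^+_{n,0},X^-_{n-1,0}]=0$; since both $(n-2,n)$ and $(n-2,n-1)$ are admissible pairs for Lemma~\ref{Lem1}, the Jacobi expansion collapses to a linear identity relating $[X^+_{n,2},X^-_{n-1,0}]$ and $[X^+_{n,0},X^-_{n-1,2}]$, and the $(0,2)$ result closes it.

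The middle case $[X^+_{n,1},X^-_{n-1,1}]=0$ is the main obstacle, since both factors are level-one generators and the symmetric expansion of $X^-_{n-1,1}$ via $H_{n-1,1}$ loops back to the same commutator and yields only a tautology. The strategy is to expand the other factor as $X^+_{n,1}=\frac{1}{2}[H_{n-1,1},X^+_{n,0}]$ and apply Jacobi: the cross-term $[X^+_{n,0},X^-_{n-1,1}]$ is rewritten as $[X^+_{n,1},X^-_{n-1,0}]=-X^+_{n+1,1}$ using \eqref{eq901-5} and the definition of $X^+_{n+1,1}$, while $[H_{n-1,1},X^-_{n-1,1}]$ is evaluated by Lemma~\ref{Lem1}. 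This expresses $[X^+_{n,1},X^-_{n-1,1}]$ as a linear combination of $[H_{n-1,1},X^+_{n+1,1}]$ and the $(0,2)$ bracket. A further Jacobi expansion of $X^+_{n+1,1}=-[X^+_{n,1},X^-_{n-1,0}]$, combined with the already-proven $(2,0)$ case, reduces $[H_{n-1,1},X^+_{n+1,1}]$ to $2\,[X^+_{n,1},X^-_{n-1,1}]$. Substituting back collapses everything to $[X^+_{n,1},X^-_{n-1,1}]=-[X^+_{n,1},X^-_{n-1,1}]$, so the middle commutator vanishes as well.
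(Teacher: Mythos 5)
Your proposal is correct and follows essentially the same route as the paper: the $(0,2)$ and $(2,0)$ cases via $\ad(H_{n-1,2})$ and $\ad(H_{n-2,2})$ applied to $[X^+_{n,0},X^-_{n-1,0}]=0$ are identical, and the $(1,1)$ case is the same ``apply a degree-one Cartan element to the shift identity \eqref{eq901-5} and deduce $x=-x$'' argument, merely using $\ad(H_{n-1,1})$ (routed through the definition of $X^+_{n+1,1}$) where the paper uses $\ad(H_{n-2,1})$. The only cosmetic caveat is that $[H_{n-1,1},X^+_{n,1}]=2X^+_{n,2}$ is not literally supplied by Lemma~\ref{Lem1} (which excludes the pair $(n-1,n)$) but by the inductive definition $X^+_{n,s}=\tfrac12[H_{n-1,1},X^+_{n,s-1}]$, so the step is still justified.
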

\begin{proof}
We only show the first equation. The second equation can be proven in a similar way.
By \eqref{eq901-2--1}, Lemma~\ref{Lem1} and \eqref{eqeq0-1}, we have
\begin{align}
0&=[H_{n-1,2},[X^-_{n-1,0},X^+_{n,0}]]=-2[X^-_{n-1,2},X^+_{n,0}].\label{ultra}
\end{align}
We obtain
\begin{align}
0&=[H_{n-2,2},[X^-_{n-1,0},X^+_{n,0}]]=[X^-_{n-1,2},X^+_{n,0}]-[X^-_{n-1,0},X^+_{n,2}]=-[X^-_{n-1,0},X^+_{n,2}]\label{ultra-2}
\end{align}
by \eqref{eq901-2--1}, Lemma~\ref{Lem1} and \eqref{ultra}. By \eqref{ultra-2}, Lemma~\ref{Lem1}, \eqref{eq901-5}, we have
\begin{align*}
&\quad[X^+_{n,1},X^-_{n-1,1}]=[X^+_{n,1},X^-_{n-1,1}]-[X^+_{n,2},X^-_{n-1,0}]\\
&=[H_{n-2,1},[X^+_{n,1},X^-_{n-1,0}]]=[H_{n-2,1},[X^+_{n,0},X^-_{n-1,1}]]\\
&=-[X^+_{n,1},X^-_{n-1,1}]+[X^+_{n,0},X^-_{n-1,2}]=-[X^+_{n,1},X^-_{n-1,1}].
\end{align*}
Thus, we have obtained $[X^+_{n,r},X^-_{n-1,s}]=0$ for $r+s=2$.
\end{proof}
\begin{Corollary}
The following relation holds:
\begin{equation}
[H_{n-2,1},X^\pm_{n+1,1}]=0.\label{eeq901}
\end{equation}
\end{Corollary}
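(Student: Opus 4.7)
The idea is to push $H_{n-2,1}$ inside the defining commutator for $X^\pm_{n+1,1}$ via the Jacobi identity and then invoke the identity \eqref{LemLem}.

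The plan is to start from the definitions
\begin{equation*}
X^+_{n+1,1}=-[X^+_{n,1},X^-_{n-1,0}],\qquad X^-_{n+1,1}=-[X^+_{n-1,1},X^-_{n,0}]
\end{equation*}
and expand $[H_{n-2,1},X^+_{n+1,1}]$ by Jacobi as
\begin{equation*}
-[[H_{n-2,1},X^+_{n,1}],X^-_{n-1,0}]-[X^+_{n,1},[H_{n-2,1},X^-_{n-1,0}]].
\end{equation*}
To evaluate the inner brackets I would invoke Lemma~\ref{Lem1}, which gives the relation $[H_{i,r},X^\pm_{j,s}]=\pm a^{r}_{i,j}X^\pm_{j,r+s}$ for the indices involved. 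Since $a^1_{n-2,n}=a_{n-2,n}=-1$ and $a^1_{n-2,n-1}=a_{n-2,n-1}=-1$, this collapses the above expression to $[X^+_{n,2},X^-_{n-1,0}]-[X^+_{n,1},X^-_{n-1,1}]$. Both terms have total degree $2$ and so vanish by \eqref{LemLem}.

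The $-$ case is handled identically: Jacobi on $[H_{n-2,1},-[X^+_{n-1,1},X^-_{n,0}]]$ yields, after applying Lemma~\ref{Lem1}, the combination $[X^+_{n-1,2},X^-_{n,0}]-[X^+_{n-1,1},X^-_{n,1}]$, which vanishes by the second equation in \eqref{LemLem}.

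There is no real obstacle here; the proof is essentially a one-line calculation. The only delicate point is keeping track of signs and verifying the values $a^1_{n-2,n-1}=a^1_{n-2,n}=-1$ (note that the correction term $2\delta_{i,n-1}\delta_{j,n}$ in $a^r_{i,j}$ contributes $0$ when $i=n-2$), so that both residual commutators land precisely in the degree-$2$ window where \eqref{LemLem} applies.
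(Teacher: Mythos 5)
Your proof is correct and is essentially the paper's own argument: expand $[H_{n-2,1},X^\pm_{n+1,1}]$ by the Jacobi identity applied to the defining commutator, evaluate the inner brackets with \eqref{Eq901-1} from Lemma~\ref{Lem1} (using $a^1_{n-2,n-1}=a^1_{n-2,n}=-1$), and kill both resulting degree-$2$ terms with \eqref{LemLem}. The sign bookkeeping is immaterial since each residual commutator vanishes individually, so nothing further is needed.
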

\begin{proof}
We only show the $+$ case. The $-$ case can be proven in a similar way. By \eqref{LemLem}, Lemma~\ref{Lem1} and the definition of $X^+_{n+1,1}$, we obtain
\begin{align}
[H_{n-2,1},X^+_{n+1,1}]=-\dfrac{1}{2}[H_{n-2,1},[X^+_{n,1},X^-_{n-1,0}]]=-[X^+_{n,2},X^-_{n-1,0}]+[X^+_{n,1},X^-_{n-1,1}]=0.\label{muju2}
\end{align}
\end{proof}
\begin{Lemma}
The following relation holds:
\begin{equation}
[H_{n,2},X^\pm_{n-1,r}]=0.\label{eqeq1}
\end{equation}
\end{Lemma}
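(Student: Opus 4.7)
The plan is to reduce the general case to $r = 0$ and $r = 1$ by induction via $X^\pm_{n-1, r} = \tfrac{1}{2}[H_{n-1, 1}, X^\pm_{n-1, r-1}]$ for $r \ge 2$, and to treat the $-$ case by an analogous argument. The inductive step requires both $[H_{n, 2}, X^\pm_{n-1, r-1}] = 0$ (inductive hypothesis) and $[H_{n, 2}, H_{n-1, 1}] = 0$, so these two identities are the heart of the proof.

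For $r = 0$, I would first establish the auxiliary $[X^+_{n, 2}, X^+_{n-1, 0}] = 0$. Starting from the Serre relation $[X^+_{n, 0}, X^+_{n-1, 0}] = 0$ (valid because $a_{n-1, n} = 0$), I apply $\ad H_{n-1, 2}$: since $[H_{n-1, 2}, X^+_{n, 0}] = 0$ by \eqref{eqeq0-1} and $[H_{n-1, 2}, X^+_{n-1, 0}] = 2 X^+_{n-1, 2}$ by the $\mathfrak{sl}(n)$-Yangian relations inside $\widetilde{L}$ (Lemma~\ref{Lem1}, applied to nodes $\{1, \ldots, n-1\}$), Jacobi yields $[X^+_{n, 0}, X^+_{n-1, 2}] = 0$. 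I then apply $\ad H_{n-2, 2}$: by Lemma~\ref{Lem1} applied to nodes $\{1, \ldots, n-2, n\}$ we have $[H_{n-2, 2}, X^+_{n, 0}] = -X^+_{n, 2}$ and $[H_{n-2, 2}, X^+_{n-1, 0}] = -X^+_{n-1, 2}$, so Jacobi gives $[X^+_{n, 2}, X^+_{n-1, 0}] + [X^+_{n, 0}, X^+_{n-1, 2}] = 0$, whence the auxiliary fact. Expanding $H_{n, 2} = [X^+_{n, 2}, X^-_{n, 0}]$ by Jacobi then immediately gives $[H_{n, 2}, X^+_{n-1, 0}] = 0$, since $[X^-_{n, 0}, X^+_{n-1, 0}] = 0$ and the auxiliary fact kills the other bracket.

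For $r = 1$, Jacobi applied to $X^+_{n-1, 1} = \tfrac{1}{2}[H_{n-1, 1}, X^+_{n-1, 0}]$ and the $r = 0$ result reduces the claim to $[H_{n, 2}, H_{n-1, 1}] = 0$. To prove this, rewrite $H_{n-1, 1} = [X^+_{n, 1}, X^-_{n, 0}]$ via \eqref{eq901-3-1}, and compute $[H_{n, 2}, X^+_{n, 1}] = 2 X^+_{n, 3}$ and $[H_{n, 2}, X^-_{n, 0}] = 2 X^-_{n, 2}$ using the two expressions $H_{n, 2} = [X^+_{n, 2}, X^-_{n, 0}] = [X^+_{n, 1}, X^-_{n, 1}]$ available in $\widetilde{L}$. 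Jacobi then gives
\[
[H_{n, 2}, H_{n-1, 1}] = 2[X^+_{n, 3}, X^-_{n, 0}] + 2[X^+_{n, 1}, X^-_{n, 2}].
\]
Expanding each summand using $X^+_{n, 3} = \tfrac{1}{2}[H_{n-1, 1}, X^+_{n, 2}]$ and $X^-_{n, 2} = \tfrac{1}{2}[H_{n-1, 1}, X^-_{n, 1}]$ and applying Jacobi, the cross terms $\pm[X^+_{n, 2}, X^-_{n, 1}]$ cancel and the sum becomes $[H_{n-1, 1}, H_{n, 2}]$. Hence $[H_{n, 2}, H_{n-1, 1}] = -2[H_{n, 2}, H_{n-1, 1}]$, so $3[H_{n, 2}, H_{n-1, 1}] = 0$ and the vanishing follows.

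The hard part is precisely the self-referential identity for $[H_{n, 2}, H_{n-1, 1}]$: every single-expansion Jacobi based on one decomposition of $H_{n, 2}$ cycles back to $[H_{n, 2}, H_{n-1, 1}]$ itself, and the trick is to combine two different expressions for $H_{n, 2}$ with the inductive definitions of $X^\pm_{n, s}$ to produce the overdetermined equation $3[H_{n, 2}, H_{n-1, 1}] = 0$. The remaining steps are straightforward Jacobi bookkeeping.
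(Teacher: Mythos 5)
Your proof is correct, but it diverges from the paper's argument in two places, and in both it does more work than necessary. For $r=0$ the paper writes $H_{n,2}=[X^+_{n,0},X^-_{n,2}]$ and kills the resulting bracket with the mixed-sign relation $[X^-_{n,2},X^+_{n-1,0}]=0$, which is exactly the content of the immediately preceding lemma (equation \eqref{LemLem}); you instead derive the same-sign relation $[X^+_{n,2},X^+_{n-1,0}]=0$ from \eqref{eqeq0-1} and Lemma~\ref{Lem1}, which is a valid alternative (it is the $r+s=2$ instance of \eqref{Eq901-17-1}, later proved in Corollary~\ref{Cor} using the present lemma, so your derivation is not circular). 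For $r\geq1$ the paper raises the degree of $X^\pm_{n-1}$ with $\ad(H_{n-2,1})$ precisely so that the required commutativity $[H_{n,2},H_{n-2,1}]=0$ is already supplied by Lemma~\ref{Lem1}; you raise with $\ad(H_{n-1,1})$ and are therefore forced to prove $[H_{n,2},H_{n-1,1}]=0$. That identity is what your ``hard part'' establishes, but it is in fact already available: it is equation \eqref{eqeq0-0}, stated and proved as part of Lemma~\ref{Lem1}, and it also falls out in one line since $[X^+_{n,3},X^-_{n,0}]$ and $[X^+_{n,1},X^-_{n,2}]$ are both equal to $H_{n-1,3}$ by \eqref{Eq901-3} (again Lemma~\ref{Lem1}), so $[H_{n,2},H_{n-1,1}]=2[X^+_{n,3},X^-_{n,0}]-2[X^+_{n,1},X^-_{n,2}]=0$ directly. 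Your self-referential computation yielding $3[H_{n,2},H_{n-1,1}]=0$ does go through, but note the sign slips along the way: $[H_{n,2},X^-_{n,0}]=-2X^-_{n,2}$, not $+2X^-_{n,2}$, so the intermediate expression is a difference $2[X^+_{n,3},X^-_{n,0}]-2[X^+_{n,1},X^-_{n,2}]$ rather than a sum; with the signs corrected the cross terms $[X^+_{n,2},X^-_{n,1}]$ still cancel and the overdetermined equation survives. In short: the paper's choice of $H_{n-2,1}$ over $H_{n-1,1}$ is exactly what lets it avoid the entire second half of your argument.
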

\begin{proof}
We only show the $+$ case. The $-$ case can be proven by the same way.
First, we show the case that $r=0$. We obtain
\begin{align*}
[H_{n,2},X^+_{n-1,0}]
&=[[X^+_{n,0},X^-_{n,2}],X^+_{n-1,0}]=[X^+_{n,0},[X^-_{n,2},X^+_{n-1,0}]]=0,
\end{align*}
where the first equality is due to Lemma~\ref{Lem1}, the second equality is due to \eqref{eq901-17} and the last equality is due to \eqref{LemLem}.

Since the relation $[H_{n,2},H_{n-2,1}]=0$ holds by Lemma~\ref{Lem1}, we obtain
\begin{equation*}
[H_{n,2},X^+_{n-1,r}]=-[H_{n,2},[H_{n-2,1},X^+_{n,r-1}]]=-[H_{n-2,1},[H_{n-1,2},X^+_{n,r-1}]].
\end{equation*}
Thus, the case that $r\geq1$ can be proven inductively.
\end{proof}
By the same discussion as the last part of the above proof, we obtain
\begin{align}
[H_{n,2},X^\pm_{n-1,r}]=0.\label{eqeq0}
\end{align}
\begin{Lemma}
The following relations hold:
\begin{gather}
[X^+_{n+1,1},X^\pm_{j,0}]=0,\text{ if }j\neq n-1,n,n+1,\label{eq901-11}\\
[X^-_{n+1,1},X^\pm_{j,0}]=0,\text{ if }j\neq n-1,n,n+1,\label{eq901-12}\\
[X^+_{n+1,1},X^+_{n-1,0}]=-2X^+_{n,1},\ [X^+_{n+1,1},X^-_{n-1,0}]=0,\label{eq901-13}\\
[X^+_{n+1,1},X^-_{n,0}]=2X^-_{n-1,1},\ [X^+_{n+1,1},X^+_{n,0}]=0,\label{eq901-14}\\
[X^-_{n+1,1},X^-_{n-1,0}]=2X^-_{n,1},\ [X^-_{n+1,1},X^+_{n-1,0}]=0,\label{eq901-15}\\
[X^-_{n+1,1},X^+_{n,0}]=-2X^+_{n-1,1},\ [X^-_{n+1,1},X^-_{n,0}]=0.\label{eq901-16}
\end{gather}
\end{Lemma}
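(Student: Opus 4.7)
The plan is to prove each of the twelve identities by starting from one of the two equivalent presentations of $X^+_{n+1,1}$, namely $X^+_{n+1,1}=-[X^+_{n,1},X^-_{n-1,0}]$ from the definition and $X^+_{n+1,1}=-[X^+_{n,0},X^-_{n-1,1}]$ from \eqref{eq901-5}, expanding by the Jacobi identity, and reducing the pieces via Lemmas \ref{Lem0}, \ref{Lem1}, and the derived identities \eqref{LemLem}, \eqref{eqeq1}, \eqref{eeq901}. The $X^-_{n+1,1}$ identities are handled symmetrically using the two mirror presentations from the definition and from \eqref{eq901-8}. The key subtlety is that which presentation to use is case-dependent: the ``wrong'' choice typically returns a tautology that expresses the desired bracket in terms of itself.

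For the long-distance identities \eqref{eq901-11} and \eqref{eq901-12}, the case $j\le n-3$ is immediate, since one Jacobi step with either presentation annihilates both resulting terms by the Yangian relations of Lemma \ref{Lem1}. For $j=n-2$, the $+$ case follows from the original presentation together with \eqref{eq901-10} (giving $[X^+_{n,1},X^+_{n-2,0}]=[X^+_{n,0},X^+_{n-2,1}]$) and then $[X^-_{n-1,0},X^+_{n-2,1}]=0$ from \eqref{eq901-2}; the $-$ case requires a double Jacobi after substituting $X^+_{n,1}=\tfrac{1}{2}[H_{n-1,1},X^+_{n,0}]$ and invoking the Yangian relation $[X^+_{n,0},X^-_{n-2,1}]=0$ from Lemma \ref{Lem1} together with \eqref{eq901-10} to collapse the remaining bracket. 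The pair \eqref{eq901-14} is routine: $[X^+_{n+1,1},X^+_{n,0}]=0$ from $[X^+_{n,1},X^+_{n,0}]=0$ (\eqref{eq901-10-1}) and $[X^-_{n-1,0},X^+_{n,0}]=0$, while $[X^+_{n+1,1},X^-_{n,0}]=2X^-_{n-1,1}$ reduces via $[X^+_{n,1},X^-_{n,0}]=H_{n-1,1}$ (\eqref{eq901-3-1}) and $[H_{n-1,1},X^-_{n-1,0}]=-2X^-_{n-1,1}$. The identities \eqref{eq901-15} and \eqref{eq901-16} are proved by mirror arguments.

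The main obstacle is the Serre-type pair \eqref{eq901-13}, where the original presentation produces a tautology. Here I would use the alternative $X^+_{n+1,1}=-[X^+_{n,0},X^-_{n-1,1}]$. For $[X^+_{n+1,1},X^+_{n-1,0}]=-2X^+_{n,1}$, expanding yields $-[X^+_{n,0},[X^-_{n-1,1},X^+_{n-1,0}]]+[X^-_{n-1,1},[X^+_{n,0},X^+_{n-1,0}]]$; the second term vanishes by \eqref{eq901-17}, and the first uses $[X^-_{n-1,1},X^+_{n-1,0}]=-H_{n-1,1}$ from \eqref{eq901-2-0} and collapses to $-[H_{n-1,1},X^+_{n,0}]=-2X^+_{n,1}$ via the inductive definition of $X^+_{n,1}$. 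For $[X^+_{n+1,1},X^-_{n-1,0}]=0$, both Jacobi terms vanish, using $[X^-_{n-1,1},X^-_{n-1,0}]=0$ (a consequence of \eqref{eq901-10-1}) and $[X^+_{n,0},X^-_{n-1,0}]=0$ (\eqref{eq901-2--1}). Once the correct starting presentation is identified, the remaining work is routine Jacobi manipulation with relations already on hand.
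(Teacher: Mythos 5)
Your proposal is correct and follows essentially the same route as the paper: the paper likewise proves only the delicate pair \eqref{eq901-13} in detail, by rewriting $X^+_{n+1,1}=-[X^+_{n,1},X^-_{n-1,0}]$ as $-[X^+_{n,0},X^-_{n-1,1}]$ via \eqref{eq901-5} and then expanding with the Jacobi identity, \eqref{eq901-17}, \eqref{eq901-2} and \eqref{eq901-10}, declaring the remaining identities analogous. Your extra case analysis (including the double-Jacobi reduction for $j=n-2$ and the mirror treatment of $X^-_{n+1,1}$) fills in exactly what the paper leaves to the reader; only note that the bracket $[X^-_{n-1,1},X^+_{n-1,0}]=-H_{n-1,1}$ comes from \eqref{eq901-2} rather than \eqref{eq901-2-0}.
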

\begin{proof}
We only show the relation \eqref{eq901-13}. The other relations can be proven by the same way. By the definition of $X^+_{n+1,1}$, we have
\begin{align*}
[X^+_{n+1,1},X^+_{n-1,0}]&=-[[X^+_{n,1},X^-_{n-1,0}],X^+_{n-1,0}]\\
&=-[[X^+_{n,0},X^-_{n-1,1}],X^+_{n-1,0}]=-[X^+_{n,0},H_{n-1,1}]=-2X^+_{n,1},
\end{align*}
where the second equality is due to \eqref{eq901-5}, the third equality is due to \eqref{Eq901-4} and \eqref{eq901-2}, and the last duality is due to \eqref{eq901-1}. Similarly, by the definition of $X^+_{n+1,1}$, we obtain
\begin{align*}
[X^+_{n+1,1},X^-_{n-1,0}]&=-[[X^+_{n,1},X^-_{n-1,0}],X^-_{n-1,0}]\\
&=-[[X^+_{n,0},X^-_{n-1,1}],X^-_{n-1,0}]=0,
\end{align*}
where the second equality is due to \eqref{eq901-5} and third equality is due to \eqref{Eq901-4} and \eqref{eq901-10}.
\end{proof}
\begin{Corollary}\label{Cor}
\begin{enumerate}
\item We obtain \eqref{Eq901-10}, \eqref{Eq901-17-1} and \eqref{Eq901-17-2}.
\item The relations \eqref{Eq901-4}, \eqref{Eq901-5}, \eqref{Eq901-7} and \eqref{Eq901-8} hold. 
\item We have $[H_{n,2},X^\pm_{n+1,2r+1}]=\pm2X^\pm_{n+1,2r+3}=[H_{n-1,2},X^\mp_{n+1,2r+1}]$.
\end{enumerate}
\end{Corollary}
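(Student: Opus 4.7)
The plan is to prove each of the three parts by induction on the degree parameters, using the adjoint actions of $H_{n,2}$ and $H_{n-1,2}$ as the principal lifting tools. Thanks to \eqref{eqeq1} and its mirror \eqref{eqeq0}, the derivation $\ad(H_{n,2})$ shifts the $n$-slot index of $X^\pm_{n,\cdot}$ by two while annihilating every $X^\pm_{n-1,\cdot}$, and $\ad(H_{n-1,2})$ plays the symmetric role. Combined with $\ad(H_{n-1,1})$, which shifts both indices by one at once via \eqref{eq901-1}, these give enough elementary moves to reach arbitrary $(r,s)$ from a small set of base cases.

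For Part (1), the base case of \eqref{Eq901-17-1} at $(r,s)=(0,0)$ is \eqref{eq901-17} since $a_{n-1,n}=0$, and a single application of $\ad(H_{n-1,1})$ turns it into the $(0,0)$ instance of \eqref{Eq901-10}. All $(r,s)$ with $r,s$ both even follow by iterating $\ad(H_{n,2})$ and $\ad(H_{n-1,2})$. To reach odd-odd pairs such as $(1,1)$, I would apply $\ad(H_{n-1,1})^2$ to the base relation and combine the resulting three-term identity with the already established $(2,0)$ and $(0,2)$ vanishings; this pattern extends by induction. The Serre-type \eqref{Eq901-17-2} begins with \eqref{eq903-1}, \eqref{eq903-3}, \eqref{eq903-7} and is lifted by the same machinery.

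For Part (2), the $(0,0)$ case of \eqref{Eq901-4} is \eqref{eq901-2--1}, while the $r+s=2$ case is \eqref{LemLem}; the $(2r+1,0)$ case of \eqref{Eq901-5} is nothing but the definition of $X^+_{n+1,2r+1}$. To lift to generic $(r,s)$ of the correct parity I would apply $\ad(H_{n-1,1})$ to redistribute degree between the two slots, after verifying the coherence statement that $-[X^+_{n,k},X^-_{n-1,2r+1-k}]$ is independent of $k$; the coherence check follows from Part (1) together with \eqref{eeq901}. The mirror relations \eqref{Eq901-7}, \eqref{Eq901-8} follow by the $+\leftrightarrow-$ symmetric argument applied to $X^-_{n+1,2r+1}=-[X^+_{n-1,2r+1},X^-_{n,0}]$.

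Part (3) is a direct Jacobi computation: expand $[H_{n,2},X^+_{n+1,2r+1}]=-[H_{n,2},[X^+_{n,2r+1},X^-_{n-1,0}]]$, apply Lemma~\ref{Lem1} to evaluate the inner bracket with $H_{n,2}$ as $2X^+_{n,2r+3}$, use \eqref{eqeq1} to kill the other term, and conclude with \eqref{Eq901-5} just proved in Part (2). The remaining three equalities are handled analogously using the defining expression for $X^-_{n+1,2r+1}$ together with \eqref{eqeq0-1} and \eqref{Eq901-8}. The main obstacle I expect is the parity bookkeeping in Part (1)'s odd-odd step: since $\ad(H_{n,2})$ and $\ad(H_{n-1,2})$ preserve the parities of $r$ and $s$ individually, one must combine them with iterated $\ad(H_{n-1,1})$ actions and solve the resulting small linear systems, and a secondary subtlety is the coherence check in Part (2) that multiple presentations of $X^+_{n+1,2r+1}$ coincide.
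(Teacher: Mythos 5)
Your overall strategy---lifting low-degree base cases by the derivations $\ad(H_{n,2})$ and $\ad(H_{n-1,2})$ (which, by \eqref{eqeq1} and \eqref{eqeq0}, raise one slot by two while killing the other) together with $\ad(H_{n-2,1})$ or $\ad(H_{n-1,1})$ to redistribute degree, plus a direct Jacobi computation for part (3)---is exactly the paper's proof of \eqref{Eq901-17-1}, \eqref{Eq901-10}, \eqref{Eq901-4}--\eqref{Eq901-8} and of part (3). Your explicit handling of the odd-odd case $(1,1)$ by applying $\ad(H_{n-1,1})^2$ to the base relation and subtracting the $(2,0)$ and $(0,2)$ vanishings is, if anything, slightly more careful than the paper, whose induction step only reduces a slot of degree at least $2$.

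There is, however, one step that would fail as written: your plan for \eqref{Eq901-17-2}. The relations \eqref{eq903-1}, \eqref{eq903-3}, \eqref{eq903-7} are not instances of \eqref{Eq901-17-2}; they involve the node $n-2$, whereas \eqref{Eq901-17-2} is the Serre-type relation for the pair $(n-1,n)$ alone. Since $a_{n-1,n}=0$ in type $D$, the defining relations of $\widetilde{L}$ contain no instance of this Serre relation (\eqref{eq902} only covers $a_{i,j}=-1$), so there is no base case to lift and the "same machinery" has nothing to start from. The correct derivation, which is the one the paper gives, is a short shift-and-swap argument requiring no induction: for $r_2\geq 1$ write $[X^\pm_{i,r_2},X^\pm_{j,s}]=-[X^\pm_{i,r_2-1},X^\pm_{j,s+1}]$ by \eqref{Eq901-10}, move $X^\pm_{i,r_1}$ past $X^\pm_{i,r_2-1}$ using \eqref{Eq901-10-2}, and note that $[X^\pm_{i,r_1},X^\pm_{j,s+1}]=0$ by \eqref{Eq901-17-1} since $r_1+s+1$ is even (with the symmetric shift upward when $r_2=0$). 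A minor further imprecision: the coherence check in your part (2) rests on applying $\ad(H_{n-2,1})$ to the already-established even-sum vanishing \eqref{Eq901-4}, not on \eqref{eeq901} or on part (1), which concern different brackets.
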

\begin{proof}
\begin{enumerate}
\item We only show the $+$ case. The $-$ case can be proven by the same way. First, we show the relation \eqref{Eq901-17-1}. We prove by the induction on $r+s$. The cases that $r+s=0$ is nothing but \eqref{eq901-17} and \eqref{Eq901-17-1}. We suppose that \eqref{Eq901-17} holds for $r+s=2k$. 
For $r\geq2$, by \eqref{eqeq0}, \eqref{eqeq1} and Lemma~\ref{Lem1}, we have
\begin{align*}
[X^+_{n-1,2k+2-r},X^+_{n,r}]=\dfrac{1}{2}[H_{n,2},[X^+_{n-1,k+2-r},X^+_{n,r-2}]]=0
\end{align*}
and
\begin{align*}
[X^+_{n,2k+2-r},X^+_{n-1,r}]=\dfrac{1}{2}[H_{n-1,2},[X^+_{n,k+2-r},X^+_{n-1,r-2}]]=0.
\end{align*}
Thus, we have proved the relation \eqref{Eq901-17-1}.

Next, we show the relation \eqref{Eq901-10}. By Lemma~\ref{Lem0} and \eqref{Eq901-17-1}, we have
\begin{align*}
0=[H_{n-2,1},[X^\pm_{n-1,r},X^\pm_{n,2k-r}]]=\mp[X^\pm_{n-1,r+1},X^\pm_{n,2k-r}]\mp[X^\pm_{n-1,r},X^\pm_{n,2k-r+1}].
\end{align*}
Thus, we have shown \eqref{Eq901-10}.

Finally, we show the relation \eqref{Eq901-17-2}. Suppose that $(i,j)=(n-1,n),(n,n-1)$ and $r_1+s,r_2+s$ are odd. In the case that $r_2\neq0$, we obtain
\begin{align*}
&\quad[X^\pm_{i,r_1},[X^\pm_{i,r_2},X^\pm_{j,s}]]=-[X^\pm_{i,r_1},[X^\pm_{i,r_2-1},X^\pm_{j,s+1}]]\\
&=-[X^\pm_{i,r_2-1},[X^\pm_{i,r_1},X^\pm_{j,s+1}]]=0,
\end{align*}
where the first equality is due to \eqref{Eq901-10} and the second equality is due to \eqref{Eq901-17}.
In the case that $r_2=0$, by \eqref{Eq901-10} and \eqref{Eq901-17}, we have
\begin{align*}
&\quad[X^\pm_{i,r_1},[X^\pm_{i,r_2},X^\pm_{j,s}]]=-[X^\pm_{i,r_1},[X^\pm_{i,r_2+1},X^\pm_{j,s-1}]]\\
&=-[X^\pm_{i,r_2+1},[X^\pm_{i,r_1},X^\pm_{j,s-1}]]=0.
\end{align*}
Thus, we have proved \eqref{Eq901-17-2}.
\item
We prove \eqref{Eq901-4} by the induction on $r+s$. 
The cases that $r+s=0,2$ are nothing but \eqref{Eq901-4} and \eqref{LemLem}. Suppose that \eqref{Eq901-4} holds for $r+s=2m$. By \eqref{eqeq0} and \eqref{eqeq1}, we have
\begin{align*}
[X^+_{n,r},X^-_{n-1,2m+2-r}]&=-\dfrac{1}{2}[H_{n-1,2},[X^+_{n,r},X^-_{n-1,2m-r}]]=0,\\
[X^+_{n,2m+2-r},X^-_{n-1,r}]&=\dfrac{1}{2}[H_{n,2},[X^+_{n,2m-r},X^-_{n-1,r}]]=0.
\end{align*}
Thus, we have proved \eqref{Eq901-4}. By adopting $\ad(H_{n-2,1})$ to $[X^+_{n,r},X^-_{n-1,2m-r}]=0$, we obtain
\begin{align*}
-[X^+_{n,r+1},X^-_{n-1,2m-r}]+[X^+_{n,r},X^-_{n-1,2m-r+1}]=0.
\end{align*}
Thus, we have proved \eqref{Eq901-5}.
We can prove \eqref{Eq901-7} and \eqref{Eq901-8} similarly to \eqref{Eq901-4} and \eqref{Eq901-5}.
\item We only show the $+$ case. The $-$ case can be proven by the same way. By the definition of $X^\pm_{n+1,2r+1}$ and \eqref{eqeq1}, we have
\begin{align*}
[H_{n,2},X^+_{n+1,2r+1}]&=-[H_{n,2},[X^+_{n,2r+1},X^-_{n-1,0}]]=-2[X^+_{n,2r+3},X^-_{n-1,0}]=2X^\pm_{n+1,2r+3}.
\end{align*}
Similarly, we obtain
\begin{align*}
[H_{n-1,2},X^+_{n+1,2r+1}]&=-[H_{n-1,2},[X^+_{n,2r+1},X^-_{n-1,0}]]\\
&=2[X^+_{n,2r+1},X^-_{n-1,2}]=2[X^+_{n,2r+3},X^-_{n-1,0}]=-2X^+_{n+1,2r+3},
\end{align*}
where the third equality is due to Corollary~\ref{Cor} 2.
\end{enumerate}
\end{proof}
\begin{Lemma}
The relations \eqref{Eq901-11}-\eqref{Eq901-16} hold.
\end{Lemma}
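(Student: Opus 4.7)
The plan is to prove all six families \eqref{Eq901-11}-\eqref{Eq901-16} by a nested induction: primary on $r$, secondary on $s$, with the secondary index allowed to range over all of $\mathbb{Z}_{\geq 0}$ at each $r$. The base point $r = s = 0$ has just been handled in the preceding lemma, so the remaining work is to run the secondary induction at $r = 0$ and then promote $r$, keeping all $s$ available at each stage.

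For the secondary $s$-induction at fixed $r$, I would express $X^\epsilon_{j, s+1}$ recursively as a scalar multiple of $[H_{j', 1}, X^\epsilon_{j, s}]$ (with $j' = j$ for $j \leq n-1$ and $j' = n-1$ for $j = n$) and apply Jacobi. This reduces $[X^\pm_{n+1, 2r+1}, X^\epsilon_{j, s+1}]$ to a linear combination of $[H_{j', 1}, [X^\pm_{n+1, 2r+1}, X^\epsilon_{j, s}]]$, which falls to the inductive hypothesis on $s$ together with \eqref{eq901-1}, and $[[H_{j', 1}, X^\pm_{n+1, 2r+1}], X^\epsilon_{j, s}]$. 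The latter requires the auxiliary vanishing $[H_{j', 1}, X^\pm_{n+1, 2r+1}] = 0$ for $j' \leq n-1$. I would establish these in a prior induction on $r$: at $r = 0$ they follow from Lemma~\ref{Lem1} (for $j' \leq n-3$), from \eqref{eeq901} (for $j' = n-2$), and from a short expansion of the defining formulas for $X^\pm_{n+1, 1}$ using Corollary~\ref{Cor}(2) (for $j' = n-1$); for larger $r$, Corollary~\ref{Cor}(3) together with \eqref{eqeq0-0} reduces $[H_{j', 1}, X^\pm_{n+1, 2r+3}]$ to $[H_{j', 1}, X^\pm_{n+1, 2r+1}]$.

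To advance the primary index from $r$ to $r + 1$, Corollary~\ref{Cor}(3) rewrites $X^\pm_{n+1, 2r+3}$ as $\pm \tfrac{1}{2}[H_{n, 2}, X^\pm_{n+1, 2r+1}]$, and Jacobi expresses $[X^\pm_{n+1, 2r+3}, X^\epsilon_{j, s}]$ as a combination of $[H_{n, 2}, [X^\pm_{n+1, 2r+1}, X^\epsilon_{j, s}]]$ and $[X^\pm_{n+1, 2r+1}, [H_{n, 2}, X^\epsilon_{j, s}]]$. The inner bracket $[H_{n, 2}, X^\epsilon_{j, s}]$ is under control in every case: it is a scalar multiple of $X^\epsilon_{j, s+2}$ for $j \in \{n-2, n\}$ by Lemma~\ref{Lem1}, zero for $j = n-1$ by \eqref{eqeq1}, zero for $j \leq n-3$ by Lemma~\ref{Lem1}, and a scalar multiple of $X^\mp_{n+1, s+2}$ for $j = n+1$ by Corollary~\ref{Cor}(3). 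In every case the right-hand side is covered by the inductive hypothesis at level $r$, with $s$ possibly shifted upward by two.

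The main obstacle is precisely this $s$-shift: advancing $r$ pushes $s$ up by two through the $H_{n, 2}$-bracket, so any attempt to do the $r$-induction only at small $s$ is circular. The correct order — first the auxiliary identities $[H_{j', 1}, X^\pm_{n+1, 2r+1}] = 0$ by induction on $r$, then the $s$-induction at $r = 0$ for all $s$, finally the $r$-induction carrying all $s$ — removes the circularity, and every remaining step is a short Jacobi manipulation using only identities already derived.
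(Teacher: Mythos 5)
Your proposal is correct and is essentially the paper's argument: the paper also proves \eqref{Eq901-11}--\eqref{Eq901-16} by a double induction from the base case of the preceding lemma, raising $s$ with $\ad(H_{n-1,1})$ (which rests on the vanishing of $[H_{\cdot,1},X^\pm_{n+1,2r+1}]$, cf.\ \eqref{eeq901}) and raising $r$ with $\ad(H_{\cdot,2})$ via Corollary~\ref{Cor}(3) and \eqref{eqeq1}. Your version is somewhat more carefully organized — the paper sidesteps the $s$-shift you worry about by performing the $r$-step only at $s=0$ with a Cartan element that annihilates the second factor — but the mechanism is the same.
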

\begin{proof}
We only show \eqref{Eq901-16}. The other relations can be proven in the same way. We prove by the induction on $r+s$. The case that $r+s=0$ is nothing but \eqref{eq901-16}. By \eqref{eeq901}, we have
\begin{align*}
2[X^-_{n+1,2r+1},X^+_{n,s+1}]=[X^-_{n+1,2r+1},[H_{n-1,1},X^+_{n,s}]]=[H_{n-1,1},-2X^+_{n-1,2r+s+1}]=-4X^+_{n-1,2r+s+2}.
\end{align*}
By using \eqref{eqeq1} and Corollary~\ref{Cor} 3, we have
\begin{align*}
2[X^-_{n+1,2r+3},X^+_{n,0}]=[H_{n-1,2},[X^-_{n+1,2r+1},X^+_{n,0}]]=[H_{n-1,2},-2X^+_{n-1,2r+s+1}]=-4X^+_{n-1,2r+s+3}.
\end{align*}
Thus, we have obtained \eqref{Eq901-16}.
\end{proof}
\begin{Lemma}
The relation \eqref{Eq901-1} holds for $(i,j)=(n,n-1),(n-1,n)$.
\end{Lemma}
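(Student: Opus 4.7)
The plan. Since $a_{n-1,n}=a_{n,n-1}=0$ in the $D_n$ Cartan matrix while $a^r_{n-1,n}=2$ precisely when $r$ is odd (and $H_{n,r}$ is only a generator for even $r$), the statement splits into three sub-claims: (A) $[H_{n,r},X^\pm_{n-1,s}]=0$ for all even $r$ and all $s\ge 0$; (B) $[H_{n-1,r},X^\pm_{n,s}]=0$ for all even $r$ and all $s\ge 0$; (C) $[H_{n-1,r},X^\pm_{n,s}]=\pm 2 X^\pm_{n,r+s}$ for all odd $r$ and all $s\ge 0$. I would establish all three by a nested induction on $r$ first and $s$ second, using the recursive definitions of $X^\pm_{i,s}$ and $H_{i,r}$ together with the commutation relations already proved in Lemma~\ref{Lem1}, Corollary~\ref{Cor}, and the auxiliary identities \eqref{eqeq0}, \eqref{eqeq0-0}, \eqref{eqeq0-1}, \eqref{eqeq1}, \eqref{eeq901}, \eqref{LemLem}.

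The $r=1$ case of (C) is almost automatic: the base $s=0$ is \eqref{eq901-1}, and the inductive step $s\to s+1$ is literally the defining formula $X^\pm_{n,s+1}=\tfrac{1}{2}[H_{n-1,1},X^\pm_{n,s}]$. For the $r=2$ base of (A) and (B), the $s=0$ cases are exactly \eqref{eqeq1} and \eqref{eqeq0-1}; to go from $s$ to $s+1$ I would apply the recursive definitions of $X^\pm_{n-1,s+1}$ and $X^\pm_{n,s+1}$ and use Jacobi, which is legal because $[H_{n,2},H_{n-1,1}]=0$ by \eqref{eqeq0-0} and $[H_{n-1,2},H_{n-1,1}]=0$ from the $\mathfrak{sl}(n)$-sub-Yangian relations of Lemma~\ref{Lem1}, so that the outer $H$ slides past $H_{n-1,1}$ and the remaining commutator is covered by the induction hypothesis.

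To raise $r$ beyond $2$ in (A) and (B), I would use $H_{n,2k}=[X^+_{n,2k},X^-_{n,0}]$ and $H_{n-1,2k}=[X^+_{n-1,2k},X^-_{n-1,0}]$, expand the triple bracket against $X^\pm_{n-1,s}$ or $X^\pm_{n,s}$ via Jacobi, and eliminate the inner brackets using the even-sum vanishings \eqref{Eq901-4}, \eqref{Eq901-7}, and \eqref{Eq901-17-1} from Corollary~\ref{Cor}. For case (C) with general odd $r=2k+1$, I would write $H_{n-1,2k+1}=[X^+_{n-1,2k+1},X^-_{n-1,0}]$ and proceed analogously; here the relevant inner brackets have odd index sum, so \eqref{Eq901-5} and \eqref{Eq901-8} produce intermediate $X^\pm_{n+1,\cdot}$ factors which then recombine through \eqref{Eq901-13}--\eqref{Eq901-16} to give the predicted $\pm 2 X^\pm_{n,r+s}$.

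The most delicate step is case (C) for odd $r\ge 3$: after the Jacobi expansion of $[[X^+_{n-1,2k+1},X^-_{n-1,0}],X^\pm_{n,s}]$, multiple $X^\pm_{n+1,\cdot}$ intermediaries appear and their contributions must be assembled with the correct signs. Iterated use of the index-shift relation \eqref{Eq901-10} together with the action formulas of $X^\pm_{n+1,\cdot}$ on $X^\pm_{n-1,\cdot}$ and $X^\pm_{n,\cdot}$ from Corollary~\ref{Cor} should make the cancellations transparent, but keeping track of signs and telescoping through that chain is the main bookkeeping challenge.
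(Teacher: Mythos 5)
Your plan matches the paper's proof in all essentials: the paper likewise reduces general $s$ to $s=0$ by iterating $\tfrac{1}{2}\ad(H_{n-1,1})$ (your induction on $s$, justified by $[H_{n-1,r},H_{n-1,1}]=0$), then writes $H_{n-1,r}$ (resp.\ $H_{n,2r}$) as a commutator $[X^{+}_{n-1,\ast},X^{-}_{n-1,\ast}]$ (resp.\ $[X^{+}_{n,2r},X^{-}_{n,0}]$), expands against $X^{\pm}_{n,0}$ (resp.\ $X^{\pm}_{n-1,0}$) by Jacobi, and kills or evaluates the resulting terms with \eqref{Eq901-4}, \eqref{Eq901-17-1} in the even case and \eqref{Eq901-5}, \eqref{Eq901-13} in the odd case. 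The step you flag as delicate is actually a one-line computation rather than a telescoping chain: only a single $X^{+}_{n+1,r}$ intermediary survives the Jacobi expansion (the other term vanishes by \eqref{Eq901-17-1}), and \eqref{Eq901-13} converts it directly into the required $2X^{+}_{n,r}$.
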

\begin{proof}
By Lemma~\ref{Lem1}, we obtain
\begin{align*}
[H_{n-1,r},X^+_{n,s}]=\Big(\dfrac{\ad(H_{n-1,1})}{2}\Big)^{s-1}[H_{n-1,r},X^+_{n,0}].
\end{align*}
For the case that $r$ is even, by \eqref{eq901-2--1} and \eqref{Eq901-4}, we have
\begin{align*}
[H_{n-1,r},X^+_{n,0}]=[[X^+_{n-1,r},X^-_{n-1,0}],X^+_{n,0}]=0.
\end{align*}
For the case that $r$ is odd, we obtain
\begin{align*}
[[X^+_{n-1,0},X^-_{n-1,r}],X^+_{n,0}]=[X^+_{n-1,0},X^+_{n+1,r}]=2X^+_{n-1,r},
\end{align*}
where the first equality is due to the definition of $X^+_{n+1,s}$ and \eqref{Eq901-17}, and the second equality is due to \eqref{Eq901-13}.
Thus, by Lemma~\ref{Lem1}, we have obtained \eqref{Eq901-1} for the case that $(i,j)=(n-1,n)$.

Similarly, by Lemma~\ref{Lem1}, \eqref{eq901-2--1} and \eqref{Eq901-4}, we can obtain 
\begin{align*}
[H_{n,2r},X^+_{n-1,s}]&=\Big(\dfrac{\ad(H_{n-1,1})}{2}\Big)^{s-1}[H_{n,2r},X^+_{n-1,0}]\\
&=\Big(\dfrac{\ad(H_{n-1,1})}{2}\Big)^{s-1}[[X^+_{n,2r},X^-_{n,0}],X^+_{n-1,0}]=0.
\end{align*}
\end{proof}
\begin{Corollary}\label{Last}
\begin{enumerate}
\item
The relation \eqref{Eq901} holds for $(i,j)=(n,n-1),(n-1,n)$.
\item The relations \eqref{Eq901-1-1}, \eqref{Eq901-9} and \eqref{Eq901-10} hold.
\end{enumerate}
\end{Corollary}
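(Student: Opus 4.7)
The plan is to derive each of the listed relations by expressing one factor as a commutator of simpler elements, expanding via the Jacobi identity, and invoking the relations proved up to this point—especially \eqref{Eq901-1} for the pairs $(n,n-1)$ and $(n-1,n)$ (just established), the shift and vanishing relations \eqref{Eq901-4}--\eqref{Eq901-8} from Corollary~\ref{Cor}(2), and \eqref{Eq901-11}--\eqref{Eq901-16} from the preceding lemma. Everything reduces to bookkeeping with the Cartan entries $a^r_{i,j}$ split by parity of $r$.

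For Part 1, to show $[H_{n,2r},H_{n-1,s}]=0$ I would use $H_{n-1,s}=[X^+_{n-1,s},X^-_{n-1,0}]$ (valid by construction of $H_{n-1,s}$ in $\widetilde{L}$) and expand by Jacobi; since $a_{n,n-1}=0$ gives $a^{2r}_{n,n-1}=0$, each bracket $[H_{n,2r},X^\pm_{n-1,\cdot}]$ vanishes by \eqref{Eq901-1}. For $(n-1,n)$ I would write $H_{n,2s'}=[X^+_{n,0},X^-_{n,2s'}]$ and expand similarly: for $r$ even the coefficient $a^r_{n-1,n}=0$ annihilates both brackets, while for $r$ odd (so $a^r_{n-1,n}=2$) the two resulting terms, which by \eqref{Eq901-3} equal $H_{n,r+2s'}$ or $H_{n-1,r+2s'}$ according to the parity of $r+2s'$, cancel pairwise.

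For \eqref{Eq901-1-1}, use $X^+_{n+1,2s+1}=-[X^+_{n,2s+1},X^-_{n-1,0}]$ and expand $[H_{i,r},X^+_{n+1,2s+1}]$ by Jacobi. The two inner brackets are $[X^+_{n,r+2s+1},X^-_{n-1,0}]$ and $[X^+_{n,2s+1},X^-_{n-1,r}]$, with coefficients $-a^r_{i,n}$ and $a^r_{i,n-1}$ respectively. When $r$ is odd both inner brackets have even total index and vanish by \eqref{Eq901-4}; when $r$ is even both equal $-X^+_{n+1,r+2s+1}$ by \eqref{Eq901-5}, leaving the coefficient $a^r_{i,n}-a^r_{i,n-1}=2\delta_{i,n}-2\delta_{i,n-1}$, computed directly from the Cartan matrix formula. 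The $X^-_{n+1,2s+1}$ case is symmetric. For \eqref{Eq901-9}, write $X^-_{n+1,2s+1}=-[X^+_{n-1,2s+1},X^-_{n,0}]$; Jacobi together with \eqref{Eq901-13} and \eqref{Eq901-14} substitutes the inner brackets by $-2X^+_{n,2r+2s+2}$ and $2X^-_{n-1,2r+1}$, and then \eqref{Eq901-2}--\eqref{Eq901-3} collapse the outcome to $2H_{n,2r+2s+2}-2H_{n-1,2r+2s+2}$. The bracket identity $[X^\pm_{n+1,2r+1},X^\pm_{n+1,2s+1}]=0$ (which I take to be the content intended by the reference to \eqref{Eq901-10} in the statement) follows from the same expansion, since both $[X^+_{n+1,2r+1},X^+_{n,2s+1}]$ and $[X^+_{n+1,2r+1},X^-_{n-1,0}]$ vanish by \eqref{Eq901-14} and \eqref{Eq901-13}.

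None of this is deep; the main obstacle is the sign and parity bookkeeping in \eqref{Eq901-1-1}, where the answer switches between $0$ and $\pm 2X^\pm_{n+1,\cdot}$ according to both the parity of $r$ and whether $i\in\{n-1,n\}$, and one must invoke \eqref{Eq901-5} in precisely the right parity. Once that is kept straight, every claim is a two-line Jacobi calculation.
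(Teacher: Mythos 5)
Your proposal is correct and follows essentially the same route as the paper: each relation is obtained by expanding the defining commutator expression for $H_{n,s}$ or $X^\pm_{n+1,2s+1}$ via the Jacobi identity and invoking \eqref{Eq901-1}, \eqref{Eq901-4}--\eqref{Eq901-8} and \eqref{Eq901-13}--\eqref{Eq901-16}, with the same parity bookkeeping in $a^r_{i,j}$; your reading of the reference to \eqref{Eq901-10} as really meaning $[X^\pm_{n+1,2r+1},X^\pm_{n+1,2s+1}]=0$, i.e.\ \eqref{Eq901-10-1}, agrees with what the paper actually proves. The only cosmetic difference is that for \eqref{Eq901-9} you decompose $X^-_{n+1,2s+1}=-[X^+_{n-1,2s+1},X^-_{n,0}]$ while the paper decomposes $X^+_{n+1,2r+1}$, which leads to the same cancellation.
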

\begin{proof}
\begin{enumerate}
\item
By \eqref{Eq901-1} and the definition of $H_{i,r}$, we have
\begin{align*}
[H_{n-1,r},H_{n,s}]=[H_{n-1,r},[X^+_{n,s},X^-_{n,0}]]=a^r_{n-1,n}([X^+_{n,s+r},X^-_{n,0}]-[X^+_{n,s},X^-_{n,r}])=0,
\end{align*}
where the last equality is due to Lemma~\ref{Lem1}.
\item First, we show the relation \eqref{Eq901-1-1}. We only prove the $+$ case. The $-$ case can be shown in a similar way. We obtain
\begin{align*}
&\quad[H_{i,2r},X^+_{n+1,2s+1}]\\
&=[H_{i,2r},-\dfrac{1}{2}[X^+_{n,2s+1},X^-_{n-1,0}]]\\
&=-(2\delta_{i,n}-\delta_{i,n-2})[X^+_{n,2s+2r+1},X^-_{n-1,0}]+(2\delta_{i,n-1}-\delta_{i,n-2})[X^+_{n,2s+1},X^-_{n-1,2r}]\\
&=(2\delta_{i,n}-\delta_{i,n-2})X^+_{n+1,2s+2r+1}-(2\delta_{i,n-1}-\delta_{i,n-2})X^+_{n+1,2s+2r+1}\\
&=(2\delta_{i,n}-2\delta_{i,n-1})X^+_{n+1,2r+2s+1},
\end{align*}
where the first equality is due to the definition of $X^+_{n+1,2s+1}$, and the second and third equalities are is due to \eqref{Eq901-1} and the last equality is due to \eqref{Eq901-6}.
Similarly, we also obtain
\begin{align*}
&\quad[H_{i,2r+1},X^+_{n+1,2s+1}]\\
&=[H_{i,2r+1},-\dfrac{1}{2}[X^+_{n,2s+1},X^-_{n-1,0}]]\\
&=-(2\delta_{i,n-1}-\delta_{i,n-2})[X^+_{n,2r+2s+2},X^-_{n-1,0}]+(2\delta_{i,n-1}-\delta_{i,n-2})[X^+_{n,2s+1},X^-_{n-1,2r+1}]\\
&=0-0=0,
\end{align*}
where the first equality is due to the definition of $X^+_{n+1,2s+1}$, the second equality is due to \eqref{Eq901-1} and the third equality is due to \eqref{Eq901-5}.

Next, we show the relation \eqref{Eq901-9}. 
\begin{align*}
&\quad[X^+_{n+1,2r+1},X^-_{n+1,2s+1}]=[-[X^+_{n,2r+1},X^-_{n-1,0}],X^-_{n+1,2s+1}]\\
&=-[[X^+_{n,2r+1},X^-_{n+1,2s+1}],X^-_{n-1,0}]-[X^+_{n,2r+1},[X^-_{n-1,0},X^-_{n+1,2s+1}]]\\
&=-[2X^+_{n-1,2r+2s+2},X^-_{n-1,0}]-[X^+_{n,2r+1},-2X^-_{n,2s+1}]\\
&=2(-H_{n-1,2r+2s+2}+H_{n,2r+2s+2}),
\end{align*}
where the first equality is due to the definition of $X^+_{n+1,2s+1}$, the third equality is due to \eqref{Eq901-15} and \eqref{Eq901-16} and the last equality is due to \eqref{Eq901-2}.

Finally, we prove the relation \eqref{Eq901-10}. We only show the $+$ case. The $-$ case can be proven in the same way.
\begin{align*}
&\quad[X^+_{n+1,2r+1},X^+_{n+1,2s+1}]=[X^+_{n+1,2r+1},-[X^+_{n,2s+1},X^-_{n-1,0}]]\\
&=-[X^+_{n,2s+1},[X^+_{n+1,2r+1},X^-_{n-1,0}]]-[[X^+_{n+1,2r+1},X^+_{n,2s+1}],X^-_{n-1,0}]=0+0=0,
\end{align*}
where the first equality is due to the definition of $X^+_{n+1,2s+1}$ and the second equality is due to \eqref{Eq901-13} and \eqref{Eq901-14}.
\end{enumerate}
\end{proof}
\begin{Lemma}\label{LemLem2}
The relation \eqref{Eq903-1} holds.
\end{Lemma}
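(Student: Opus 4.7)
The plan is to split the proof according to the parity of $s+u$ and handle the two cases separately. For $s+u$ even, applying the Jacobi identity yields
\begin{equation*}
[X^\pm_{n,s},[X^\pm_{n-2,r},X^\pm_{n-1,u}]] - [X^\pm_{n-1,u},[X^\pm_{n-2,r},X^\pm_{n,s}]] = [X^\pm_{n-2,r},[X^\pm_{n,s},X^\pm_{n-1,u}]],
\end{equation*}
whose right-hand side vanishes because $[X^\pm_{n,s},X^\pm_{n-1,u}]=0$ by \eqref{Eq901-17-1} (established in Corollary \ref{Cor}). Since $(-1)^{s+u}=1$ in this case, the desired relation follows immediately.

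For $s+u$ odd, introduce $L(r,u,s):=[X^\pm_{n,s},[X^\pm_{n-2,r},X^\pm_{n-1,u}]]$ and $R(r,u,s):=[X^\pm_{n-1,u},[X^\pm_{n-2,r},X^\pm_{n,s}]]$, so the target becomes $T(r,u,s):=L(r,u,s)+R(r,u,s)=0$. I would prove this by induction, starting from two base cases. The first, $T(0,0,1)=0$, is exactly \eqref{eq903-1}. The second, $T(0,1,0)=0$, is derived from the first as follows: setting $B(r,u,s):=[X^\pm_{n-2,r},[X^\pm_{n,s},X^\pm_{n-1,u}]]$ (so $L-R=B$ by Jacobi), two applications of the shift identity \eqref{Eq901-10-0} (from Lemma \ref{Lem1}), together with $[X^\pm_{n,0},X^\pm_{n-1,0}]=0$ (from \eqref{Eq901-17-1}) and a reverse Jacobi, produce $L(0,1,0)=L(0,0,1)-B(0,0,1)$; meanwhile \eqref{eq903-1} combined with $L-R=B$ gives $B(0,0,1)=2L(0,0,1)$, so $L(0,1,0)=-L(0,0,1)$. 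A parallel computation for $R(0,1,0)$, using additionally \eqref{Eq901-10} to swap the $(n,n-1)$-indices, yields $R(0,1,0)=L(0,0,1)$, and adding gives $T(0,1,0)=0$.

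The inductive propagation of $T=0$ uses three adjoint actions, each of which shifts exactly one of the indices up to a controlled $r$-shift while preserving the parity of $s+u$. Since $a^1_{n-3,n}=a^1_{n-3,n-1}=0$, $\ad(H_{n-3,1})$ acts only on $X^\pm_{n-2,r}$, giving $\ad(H_{n-3,1})T(r,u,s)=\mp T(r+1,u,s)$ and hence $r$-induction by one. Since $a_{n-1,n}=0$, $\ad(H_{n-1,2})$ annihilates $X^\pm_{n,s}$ and yields $\ad(H_{n-1,2})T(r,u,s)=\mp T(r+2,u,s)\pm 2T(r,u+2,s)$, propagating $u$ by two once $r$-induction is in place; symmetrically $\ad(H_{n,2})$ propagates $s$ by two. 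Starting from base case $(0,0,1)$ this covers all $(r,2l,2k+1)$, and starting from $(0,1,0)$ all $(r,2l+1,2k)$, exhausting the $s+u$ odd configurations.

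The main obstacle is the derivation of the second base case $T(0,1,0)=0$: it requires carefully orchestrating Jacobi with the shift relations \eqref{Eq901-10} and \eqref{Eq901-10-0}, the Serre-type vanishing \eqref{Eq901-17-1}, and the single datum \eqref{eq903-1} proved earlier. Once that is secured, the inductive machinery based on the three $\ad(H)$-actions is essentially routine.
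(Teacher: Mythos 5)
Your proof is correct, and its engine --- killing the $r$-dependence with $\ad(H_{n-3,1})$ and propagating $s$ and $u$ in steps of two with $\ad(H_{n,2})$ and $\ad(H_{n-1,2})$, all seeded by \eqref{eq903-1} --- is the same as the paper's. Where you genuinely differ is in how the sign $(-1)^{s+u}$ is handled. The paper runs a single induction on $s+u$ for the combination $f(r,s,u)=[X^+_{n,s},[X^+_{n-2,r},X^+_{n-1,u}]]-(-1)^{s+u}[X^+_{n-1,u},[X^+_{n-2,r},X^+_{n,s}]]$ and, at the step $s+u=1$, applies $\ad(H_{n-2,1})$ to $f(0,0,0)$; since that operator shifts $s+u$ by one it flips the sign in the definition of $f$, so the displayed identity $[H_{n-2,1},f(0,0,0)]=2f(1,0,0)-f(0,1,0)-f(0,0,1)$ must be read with care. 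You sidestep this by splitting on the parity of $s+u$: the even case collapses in one line to the Jacobi identity together with $[X^\pm_{n-1,u},X^\pm_{n,s}]=0$ from \eqref{Eq901-17-1} (no induction needed), while the odd case requires the extra base point $T(0,1,0)=0$, which you derive from \eqref{eq903-1} by explicit use of \eqref{Eq901-10-0}, \eqref{Eq901-10} and Jacobi rather than by a parity-mixing adjoint action; I checked that computation and it closes correctly ($L(0,1,0)=-L(0,0,1)$, $R(0,1,0)=L(0,0,1)$). All ingredients you invoke (\eqref{Eq901-17-1}, \eqref{Eq901-10}, \eqref{Eq901-10-0}, the relevant instances of \eqref{Eq901-1} and $[H_{n,2},X^\pm_{n-1,r}]=0$) are established earlier in the section, so there is no circularity. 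The trade-off: your base-case work is a bit longer, but the parity-separated bookkeeping is more transparent and the even case is genuinely more elementary than the paper's inductive treatment.
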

\begin{proof}
We only show the $+$ case. The $-$ case can be proven in a similar way.
Let us set
\begin{align*}
f(r,s,u)=[X^+_{n,s},[X^+_{n-2,r},X^+_{n-1,u}]]-(-1)^{s+u}[X^+_{n-1,u},[X^+_{n-2,r},X^+_{n,s}]].
\end{align*}
By \eqref{Eq901-1}, we have
\begin{align}
[H_{n-3,1},f(r,s,u)]=-f(r+1,s,u).\label{last91}
\end{align}
Thus, it is enough to show the case that $r=0$.

We prove this by induction on $s+u$. The case that $s+u=0$ follows from Lemma~\ref{Lem-1}. For the case that $s+u=1$, we obtain
\begin{align*}
0=[H_{n-2,1},f(0,0,0)]=2f(1,0,0)-f(0,1,0)-f(0,0,1)=0-0-f(0,0,1),
\end{align*}
by \eqref{last91}, \eqref{Eq901-1} and \eqref{eq903-1}. Thus, \eqref{Eq903-1} is true for the case that $s+u=1$.
The case that $s+u=2$ follows from the following relations:
\begin{gather*}
0=[H_{n,2},f(r,0,0)]=2f(r,2,0)-f(r+2,0,0)=2f(r,2,0),\\
0=[H_{n-1,2},f(r,0,0)]=2f(r,0,2)-f(r+2,0,0)=2f(r,0,2),\\
[H_{n-2,1},f(r,1,0)]=2f(r+1,1,0)-f(r,2,0)-f(r,1,1),
\end{gather*}
which are deduced from \eqref{last91} and \eqref{Eq901-1}. 

Suppose that \eqref{Eq903-1} holds if $s+u\leq m$. We obtain
\begin{gather*}
0=[H_{n,2},f(r,m-u,u)]=2f(r,m+2-u,u)-f(r+2,m-u,u)=2f(r,m+2-u,u)-0,\\
0=[H_{n-1,2},f(r,s,m-s)]=2f(r,s,m+2-s)-f(r+2,s,m-s)=2f(r,s,m+2-s)-0
\end{gather*}
by \eqref{Eq901-1} and \eqref{last91}. Thus, we have proved \eqref{Eq901-1}.
\end{proof}
\begin{Lemma}\label{LemLem3}
We obtain the relations \eqref{Eq903-3}-\eqref{Eq903-8.5}.
\end{Lemma}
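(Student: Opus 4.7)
The plan is to prove the six relations \eqref{Eq903-3}--\eqref{Eq903-8.5} by induction on the total spectral degree appearing on the left-hand side, following the same template that was used for \eqref{Eq903-1} in Lemma~\ref{LemLem2}. For each relation I would denote the left-hand side by $F(r,s,\ldots)$ and reduce the lemma to showing $F \equiv 0$.

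For the base cases at degree $0$ (all spectral indices zero), the six relations reduce to Serre-type identities in $U(\mathfrak{so}(2n))$; these are easily verified by direct computation with the basis $\{f_{i,j}\}$ (using the commutator formula of Section~2) and then transported to $\widetilde{L}$ via Lemma~\ref{Lem-1}. The degree-$1$ instances of \eqref{Eq903-3} and \eqref{Eq903-7} are built into the minimalistic presentation, so nothing new is needed there.

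For the inductive step I would apply $\ad(H_{i,t})$ for carefully chosen $(i,t)$ to propagate vanishing to higher spectral degree. The two workhorses are $\ad(H_{n-3,1})$, which raises the index of $X^\pm_{n-2,\cdot}$ by one while fixing $X^\pm_{n-1,\cdot}$ and $X^\pm_{n,\cdot}$ (since $a^1_{n-3,n-1} = 0 = a^1_{n-3,n}$), and $\ad(H_{n-1,2})$, which leaves $X^\pm_{n,0}$ unchanged (because $a_{n-1,n}=0$) while shifting indices on $X^\pm_{n-1,\cdot}$ and $X^\pm_{n-2,\cdot}$ by $2$. Applied to $F$, these produce linear combinations of translates $F(r+a, s+b, \ldots)$, which can be solved triangularly using the induction hypothesis together with \eqref{Eq903-1} from Lemma~\ref{LemLem2}.

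The main obstacle is the combinatorial bookkeeping for \eqref{Eq903-4}, \eqref{Eq903-5}, \eqref{Eq903-8}, and \eqref{Eq903-8.5}, whose left-hand sides are multiply nested commutators with several variable indices and which are not part of the minimalistic presentation. For these I would first establish the $(r,s)=(0,0)$ case as a classical identity in $\mathfrak{so}(2n)$ via Lemma~\ref{Lem-1}, then lift $r$ by iterated $\ad(H_{n-3,1})$, and finally lift $s$ by $\ad(H_{n-1,2})$ together with Jacobi expansions that feed the ``extra'' terms back into the induction hypothesis and into the previously proved instances of \eqref{Eq903-3} and \eqref{Eq903-7}. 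The parity constraint $2r+1$ in \eqref{Eq903-5} causes no trouble since $\ad(H_{n-1,2})$ shifts spectral indices by $2$.
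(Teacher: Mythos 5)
Your overall strategy is exactly the paper's: prove each identity by induction on the total spectral degree, anchoring the base case in $U(\mathfrak{so}(2n))$ via Lemma~\ref{Lem-1} (plus the degree-one instances \eqref{eq903-1}, \eqref{eq903-3}, \eqref{eq903-7} of the minimalistic presentation), and propagate upward by applying $\ad(H_{i,t})$ for suitable Cartan elements so that the translates of the left-hand side satisfy an invertible linear system at each level. For \eqref{Eq903-3} and \eqref{Eq903-7} your two named workhorses $\ad(H_{n-3,1})$ and $\ad(H_{n-1,2})$ are precisely the ones the paper uses, and the argument goes through as you describe.

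Two of your specific choices need repair, though both fixes stay inside your own framework. First, for \eqref{Eq903-4} and \eqref{Eq903-8} the element $g(s,r)$ contains $X^+_{n-3,0}$ and \emph{two} copies of $X^+_{n-2,0}$, so $\ad(H_{n-3,1})$ does not cleanly ``lift $r$'': after converting each shifted factor back via \eqref{Eq901-10-0} and \eqref{Eq903-1}, it produces the single combination $g(s+1,r)+g(s,r+1)$, which determines the level-$(m+1)$ values only up to one free parameter; and $\ad(H_{n-1,2})$ jumps two levels, so it cannot supply the missing equation at level $m+1$. The paper instead pairs $\ad(H_{n-4,1})$ (which touches only $X^+_{n-3,0}$ and yields $-g(s+1,r)$) with $\ad(H_{n-1,1})$ (which yields $g(s+1,r)-2g(s,r+1)$), giving the invertible $2\times 2$ system $\begin{pmatrix}-1&0\\1&-2\end{pmatrix}$ at every level. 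Second, for \eqref{Eq903-5} the smallest admissible instance has first spectral index $2r+1=1$, so it is \emph{not} a classical identity in $\mathfrak{so}(2n)$; the correct base point is the version of the relation with both indices zero (which is classical), from which one derives the identity for arbitrary first index and then specializes to odd ones. With those adjustments your proof coincides with the paper's.
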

\begin{proof}
First, we show \eqref{Eq903-3}. We only show the $+$ case. The $-$ case can be proven in the same way.
Let us set 
\begin{align*}
f(r,s)=[[X^+_{n,0},X^+_{n-2,r}],[X^+_{n,0},X^+_{n-1,s}]].
\end{align*}
By \eqref{Eq901-1} and \eqref{Eq901-10-0} and \eqref{Eq901-10}, we have
\begin{align}
[H_{n-3,1},f(r,s)]&=-f(r+1,s),\label{last93}\\
[H_{n-1,2},f(r,s)]&=-f(r+2,s)+2f(r,s+2).\label{last94}
\end{align}
We prove this by induction on $r+s$. The case $r+s=0$ is derived from the defining relations of $\mathfrak{so}(2n)$. The case $r+s=1$ follows from \eqref{eq903-3} and 
\begin{align*}
0=[H_{n-3,1},f(0,0)]=-f(1,0),
\end{align*}
which are derived from \eqref{last93}.

Suppose that \eqref{Eq903-9} holds in the case that $r+s\leq m+1$. 
By \eqref{last93} and \eqref{last94}, we have
\begin{align*}
0=[H_{n-1,2},f(r,m-r)]&=-f(r+2,m-r)+2f(r,m+2-r),\\
0=[H_{n-3,1},f(r,m+1-r)]&=-f(r+1,m+1-r).
\end{align*}
Thus, we find that \eqref{Eq903-3} holds for the case $r+s=m+2$. We can obtain \eqref{Eq903-7} similarly.

Next, we show the relation \eqref{Eq903-4}. Let us set
\begin{align*}
g(s,r)&=[[X^+_{n,s},[X^+_{n-2,0},X^+_{n-3,0}]],[[X^+_{n-1,0},X^+_{n-2,0}],X^+_{n,r}]].
\end{align*}
By \eqref{Eq901-1}, we have
\begin{align*}
[H_{n-4,1},g(s,r)]&=-[[X^+_{n,s},[X^+_{n-2,0},X^+_{n-3,1}]],[[X^+_{n-1,0},X^+_{n-2,0}],X^+_{n,r}]].
\end{align*}
Since
\begin{align*}
&\quad[X^+_{n,s},[X^+_{n-2,0},X^+_{n-3,1}]]=[X^+_{n,s},[X^+_{n-2,1},X^+_{n-3,0}]]\\
&=[[X^+_{n,s},X^+_{n-2,1}],X^+_{n-3,0}]=[[X^+_{n,s+1},X^+_{n-2,0}],X^+_{n-3,0}]=[X^+_{n,s+1},[X^+_{n-2,0},X^+_{n-3,0}]]
\end{align*}
holds by \eqref{Eq901-17}, we obtain
\begin{align}
[H_{n-4,1},g(s,r)]=-g(s+1,r).\label{last191}
\end{align}
By \eqref{Eq901-1}, we have
\begin{align*}
&\quad[H_{n-1,1},g(s,r)]\\
&=[2[X^+_{n,s+1},[X^+_{n-2,0},X^+_{n-3,0}]]-[X^+_{n,s},[X^+_{n-2,1},X^+_{n-3,0}]],[[X^+_{n-1,0},X^+_{n-2,0}],X^+_{n,r}]]\\
&\quad+[[X^+_{n,s},[X^+_{n-2,0},X^+_{n-3,0}]],\\
&\qquad\qquad\qquad2[[X^+_{n-1,1},X^+_{n-2,0}],X^+_{n,r}]-[[X^+_{n-1,0},X^+_{n-2,1}],X^+_{n,r}]-[[X^+_{n-1,0},X^+_{n-2,0}],X^+_{n,r+1}]].
\end{align*}
Since
\begin{align}
&\quad2[X^+_{n,s+1},[X^+_{n-2,0},X^+_{n-3,0}]]-[X^+_{n,s},[X^+_{n-2,1},X^+_{n-3,0}]]\\
&=2[X^+_{n,s+1},[X^+_{n-2,0},X^+_{n-3,0}]]-[X^+_{n,s+1},[X^+_{n-2,0},X^+_{n-3,0}]]\\
&=[X^+_{n,s+1},[X^+_{n-2,0},X^+_{n-3,0}]]
\end{align}
holds by \eqref{Eq903-17} and
\begin{align*}
&\quad2[[X^+_{n-1,1},X^+_{n-2,0}],X^+_{n,r}]-[[X^+_{n-1,0},X^+_{n-2,1}],X^+_{n,r}]-[[X^+_{n-1,0},X^+_{n-2,0}],X^+_{n,r+1}]\\
&=[[X^+_{n-1,0},X^+_{n-2,1}],X^+_{n,r}]-[[X^+_{n-1,0},X^+_{n-2,0}],X^+_{n,r+1}]\\
&=(-1)^r[[X^+_{n,r},X^+_{n-2,1}],X^+_{n-1,0}]-[[X^+_{n-1,0},X^+_{n-2,0}],X^+_{n,r+1}]\\
&=(-1)^r[[X^+_{n,r+1},X^+_{n-2,0}],X^+_{n-1,0}]-[[X^+_{n-1,0},X^+_{n-2,0}],X^+_{n,r+1}]\\
&=-[[X^+_{n-1,0},X^+_{n-2,0}],X^+_{n,r+1}]-[[X^+_{n-1,0},X^+_{n-2,0}],X^+_{n,r+1}]\\
&=-2[[X^+_{n-1,0},X^+_{n-2,0}],X^+_{n,r+1}]
\end{align*}
hold by \eqref{Eq903-17} and \eqref{Eq903-1}, we obtain
\begin{align}
[H_{n-1,1},g(s,r)]&=g(s+1,r)-2g(s,r+1).\label{last192}
\end{align}

We prove this by induction on $r+s$. The case $r+s=0$ follows from Lemma~\ref{Lem-1}. Suppose that \eqref{Eq903-4} holds for $r+s\leq m$. By \eqref{last191} and \eqref{last192}, we have
\begin{align*}
[H_{n-4,1},g(r,m+1-r)]&=-g(r+1,m+1-r),\\
[H_{n-1,1},g(r,m+1-r)]&=g(r+1,m+1-r)-2g(r,m+2-r).
\end{align*}
Thus, we find that \eqref{Eq903-4} holds for the case $r+s=m+1$.
We can prove \eqref{Eq903-5}, \eqref{Eq903-8} and \eqref{Eq903-8.5} similarly.
\end{proof}
By Lemma~\ref{Lem1}-Lemma~\ref{LemLem3}, we complete the proof of Theorem~\ref{Mini}.

\section{Twisted Yangian of type $D$}
Let us recall the definition of the twisted Yangian given in \cite{MNO}. The twisted Yangian associated with $\mathfrak{so}(2n)$ is defined as a subalgebra of the Yangian associated with $\mathfrak{gl}(2n)$.
\begin{Definition}
Let $\hbar\in\mathbb{C}$.
The Yangian associated with $\mathfrak{gl}(2n)$ is the associative algebra generated by $\{T^{(r)}_{i,j}\mid r\geq0,i,j\in I_n\}$ with the commutator relation:
\begin{gather*}
T^{(0)}_{i,j}=\delta_{i,j},\ [T^{(r+1)}_{i,j},T^{(s)}_{k,l}]-[T^{(r)}_{i,j},T^{(s+1)}_{k,l}]=\hbar(T^{(r)}_{k,j}T^{(s)}_{i,l}-T^{(s)}_{k,j}T^{(r)}_{i,l}).
\end{gather*}
\end{Definition}
We denote the Yangian associated with $\mathfrak{gl}(2n)$ by $Y_\hbar(\mathfrak{gl}(2n))$.  Let us set $\{S^{(r)}_{i,j}\mid r\geq0,i,j\in I_n\}\subset Y_\hbar(\mathfrak{gl}(2n))$ by $S^{(0)}_{i,j}=\delta_{i,j}$ and
\begin{gather*}
S_{i,j}^{(r)}=T^{(r)}_{i,j}-(-1)^rT^{(r)}_{-j,-i}+\hbar\sum_{s=1}^{r-1}\sum_{a\in I_n}(-1)^sT^{(r-s)}_{i,a}T_{-j,-a}^{(s)}.
\end{gather*}
for $r\geq1$. In particular, we have
\begin{align*}
S^{(1)}_{i,j}&=T^{(1)}_{i,j}-T^{(1)}_{-j,-i},\\
S^{(2)}_{i,j}&=T^{(2)}_{i,j}+T^{(2)}_{-j,-i}-\hbar\sum_{a\in I_n}T^{(1)}_{i,a}T^{(1)}_{-j,-a},\\
S^{(3)}_{i,j}&=T^{(3)}_{i,j}-T^{(3)}_{-j,-i}-\hbar\sum_{a\in I_n}(T^{(2)}_{i,a}T^{(1)}_{-j,-a}-T^{(1)}_{i,a}T^{(2)}_{-j,-a}).
\end{align*}
By the definition of $S^{(r)}_{i,j}$, we obtain the following lemma (See Proposition 3.7 in \cite{MNO}).
\begin{Lemma}
\begin{enumerate}
\item We obtain
\begin{align}
S^{(1)}_{i,j}+S^{(1)}_{-j,-i}&=0,\label{SS5}\\
S^{(2)}_{i,j}-S^{(2)}_{-j,-i}&=-\hbar S^{(1)}_{i,j},\label{SS6}\\
S^{(3)}_{i,j}+S^{(3)}_{-j,-i}&=0.\label{SS6.5}
\end{align}
In particular, we have $S^{(1)}_{i,-i}=0$.
\item The following relations hold:
\begin{align}
[S^{(1)}_{i,j},S^{(r)}_{k,l}]&=\delta_{i,k}S^{(r)}_{i,l}-\delta_{i,l}S^{(r)}_{k,j}-\delta_{k,-i}S^{(1)}_{-j,l}+\delta_{-j,l}S^{(r)}_{k,-i}\text{ for }r=1,2,3,\label{SS1}\\
[S^{(2)}_{i,i},S^{(2)}_{j,j}]&=\hbar S^{(1)}_{j,i}S^{(2)}_{i,j}-\hbar S^{(2)}_{j,i}S^{(1)}_{i,j}-\hbar S^{(1)}_{i,-j}S^{(2)}_{-i,j}+\hbar S^{(2)}_{j,-i}S^{(1)}_{-j,i}\text{ for }i,j>0,\label{SS3}\\
[S^{(2)}_{n-1,n},S^{(2)}_{n,n-1}]&=S^{(3)}_{n-1,n-1}-S^{(3)}_{n,n}+\hbar S^{(1)}_{n,n}S^{(2)}_{n-1,n-1}-\hbar S^{(2)}_{n,n}S^{(1)}_{n-1,n-1}\nonumber\\
&\quad-\hbar S^{(1)}_{n-1,-n}S^{(2)}_{-n,n-1}+\hbar S^{(2)}_{n-1,-n}S^{(1)}_{-n+1,n}.\label{SS4}
\end{align}
\end{enumerate}
\end{Lemma}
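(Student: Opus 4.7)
The plan is to verify this by direct computation from the definitions of $S^{(r)}_{i,j}$ combined with the RTT relation of $Y_\hbar(\mathfrak{gl}(2n))$, following Proposition 3.7 in \cite{MNO}.

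For part (1), the relation \eqref{SS5} is an immediate cancellation: $S^{(1)}_{-j,-i} = T^{(1)}_{-j,-i}-T^{(1)}_{i,j} = -S^{(1)}_{i,j}$. For \eqref{SS6}, I would expand $S^{(2)}_{i,j}-S^{(2)}_{-j,-i}$ using the definition; the linear parts involving $T^{(2)}$ are symmetric and cancel, and after relabeling $a\mapsto -a$ in one of the quadratic sums the remainder reduces to $\hbar\sum_{a\in I_n}[T^{(1)}_{-j,-a},T^{(1)}_{i,a}]$. The $r=s=0$ specialization of the RTT relation yields $[T^{(1)}_{i,j},T^{(1)}_{k,l}] = \hbar(\delta_{j,k}T^{(1)}_{i,l}-\delta_{i,l}T^{(1)}_{k,j})$, and applying this evaluates the sum to $\hbar(T^{(1)}_{-j,-i}-T^{(1)}_{i,j}) = -\hbar S^{(1)}_{i,j}$, absorbing one $\hbar$. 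The relation \eqref{SS6.5} is handled in the same way, with additional cross terms of bidegree $(1,2)$ that collapse once \eqref{SS6} is used.

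For part (2), the $r=1$ case of \eqref{SS1} is obtained by computing $[T^{(1)}_{i,j}-T^{(1)}_{-j,-i},\,T^{(1)}_{k,l}-T^{(1)}_{-l,-k}]$ via the same RTT specialization and regrouping through \eqref{SS5}. The cases $r=2,3$ then follow by applying the higher RTT relation $[T^{(1)}_{i,j},T^{(s)}_{k,l}]=\hbar(\delta_{j,k}T^{(s)}_{i,l}-\delta_{i,l}T^{(s)}_{k,j})$ to each of the four summands of $S^{(r)}_{k,l}$ and collecting the quadratic correction terms into the full $S^{(r)}$-combinations using the symmetries from part (1).

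The quadratic relations \eqref{SS3} and \eqref{SS4} are the most delicate. The plan is to expand both sides using the definition of $S^{(2)}$ and then apply the RTT identity $[T^{(2)}_{i,i},T^{(2)}_{j,j}]-[T^{(1)}_{i,i},T^{(3)}_{j,j}] = \hbar(T^{(1)}_{j,i}T^{(2)}_{i,j}-T^{(2)}_{j,i}T^{(1)}_{i,j})$ together with its image under the involution $i\mapsto -i$, and normal-order everything. The degree-three correction $S^{(3)}_{n-1,n-1}-S^{(3)}_{n,n}$ appearing in \eqref{SS4} is produced by the mixed $T^{(1)}\!\cdot\!T^{(2)}$ terms, whose antisymmetric combination repackages (via \eqref{SS6.5}) into the $S^{(3)}$-expression. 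The main obstacle is bookkeeping in \eqref{SS4}: tracking all of the cross terms and verifying that after using part (1) everything reorganizes into exactly the four quadratic $S$-products plus the correction. Since this is standard material, I would ultimately defer the detailed check to Proposition 3.7 of \cite{MNO} and only verify independently the normalization of the degree-three correction term.
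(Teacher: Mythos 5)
The paper offers no proof of this lemma at all --- it is stated as following ``by the definition of $S^{(r)}_{i,j}$'' with a citation to Proposition 3.7 of \cite{MNO} --- so your direct RTT computation, which likewise defers the heavy bookkeeping of \eqref{SS4} to that reference, is exactly the intended argument, and your verifications of \eqref{SS5}, \eqref{SS6.5} and the $r=1$ case of \eqref{SS1} are correct in outline (indeed for \eqref{SS6.5} the bidegree $(1,2)$ cross terms cancel pairwise on their own, without invoking \eqref{SS6}). One caveat: in \eqref{SS6} your phrase ``absorbing one $\hbar$'' glosses over a genuine normalization point, since with the paper's conventions the quadratic terms contribute $\hbar\sum_{a}[T^{(1)}_{-j,-a},T^{(1)}_{i,a}]=-\hbar^{2}S^{(1)}_{i,j}$, one power of $\hbar$ more than the stated right-hand side; this looks like a transcription issue in porting the relation from \cite{MNO} (where the parameter is set to $1$) rather than a flaw in your method, but it cannot be ``absorbed'' by the computation itself and should be stated as such.
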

\begin{Definition}
The twisted Yangian associated with $\mathfrak{so}(2n)$ is the subalgebra generated by $\{S^{(r)}_{i,j}\mid r\geq0,i,j\in\{\pm1,\pm2,\cdots,\pm n\}\}$.
\end{Definition}
We denote the twisted Yangian associated with $\mathfrak{so}(2n)$ by $\widetilde{TY}_\hbar(\mathfrak{so}(2n))$. Let us set the degree of $\widetilde{TY}_\hbar(\mathfrak{so}(2n))$ by $\text{deg}(S^{(r)}_{i,j})=r-1$ for $r\geq1$. Then, $\widetilde{TY}_\hbar(\mathfrak{so}(2n))$ becomes a filtered algebra. We denote the graded associative algebra of $\widetilde{TY}_\hbar(\mathfrak{so}(2n))$ by $\text{gr}\widetilde{TY}_\hbar(\mathfrak{so}(2n))$.
\begin{Theorem}[Proposition 2.23 in \cite{MNO}]\label{gra}
The graded algebra $\text{gr}\widetilde{TY}_\hbar(\mathfrak{so}(2n))$ is isomorphic to $U(\mathfrak{sl}(2n)[u]^\tau)$.
\end{Theorem}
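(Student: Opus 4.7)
The approach is to construct a surjection $\Phi\colon U(\mathfrak{sl}(2n)[u]^{\widetilde{\tau}})\to\text{gr}\,\widetilde{TY}_\hbar(\mathfrak{so}(2n))$ at the level of generators and then show it is an isomorphism via a PBW-style comparison inside the ambient Yangian $Y_\hbar(\mathfrak{gl}(2n))$. First, let $\overline{S}^{(r)}_{i,j}$ denote the principal symbol of $S^{(r)}_{i,j}$ in $\text{gr}^{r-1}\,\widetilde{TY}_\hbar(\mathfrak{so}(2n))$. The symmetry identities \eqref{SS5}--\eqref{SS6.5} yield $\overline{S}^{(r)}_{i,j}+(-1)^{r}\overline{S}^{(r)}_{-j,-i}=0$ in every degree, since the $\hbar S^{(1)}_{i,j}$ correction appearing in \eqref{SS6} sits in strictly lower filtration than its neighbours. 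This matches the $\widetilde{\tau}$-invariance of the basis $\{f^{r-1}_{i,j}\}$ of $\mathfrak{sl}(2n)[u]^{\widetilde{\tau}}$, so after a harmless sign adjustment one defines $\Phi(f^{r-1}_{i,j})=\overline{S}^{(r)}_{i,j}$.

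To promote $\Phi$ to an algebra homomorphism, I would verify that the Lie bracket formula for $[f^{r}_{i,j},f^{s}_{p,q}]$ displayed at the start of Section~3 is reproduced by the filtration-degree $(r+s-2)$ component of $[S^{(r)}_{i,j},S^{(s)}_{p,q}]$ in $\widetilde{TY}_\hbar(\mathfrak{so}(2n))$. The case $r=s=1$ is exactly \eqref{SS1}; the Cartan-type mixed degrees are covered by \eqref{SS3} and \eqref{SS4}; and the general case is obtained either by induction using \eqref{SS1} together with the quadratic reflection-equation relation of the twisted Yangian, or by expanding $S^{(r)}_{i,j}$ in terms of the $T^{(s)}_{i,j}$ and applying the defining RTT commutator, keeping only the highest-filtration piece. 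Surjectivity of $\Phi$ is automatic since the $S^{(r)}_{i,j}$ generate $\widetilde{TY}_\hbar(\mathfrak{so}(2n))$.

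The hard part is injectivity, which is equivalent to a PBW theorem for $\widetilde{TY}_\hbar(\mathfrak{so}(2n))$. I would fix a total order on a basis of $\mathfrak{sl}(2n)[u]^{\widetilde{\tau}}$ and show separately that ordered monomials in the corresponding $S^{(r)}_{i,j}$ span $\widetilde{TY}_\hbar(\mathfrak{so}(2n))$ (an induction using the reflection relation to straighten out-of-order products, producing only lower-filtration error terms), and that they are linearly independent. Independence is best verified inside $Y_\hbar(\mathfrak{gl}(2n))$, whose PBW theorem is classical: the symbol of $S^{(r)}_{i,j}$ in $\text{gr}\,Y_\hbar(\mathfrak{gl}(2n))\cong U(\mathfrak{gl}(2n)[u])$ is an explicit $\widetilde{\tau}$-symmetrized combination of basis vectors. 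The main obstacle is to control the quadratic corrections $\hbar\sum_{s,a}(-1)^{s}T^{(r-s)}_{i,a}T^{(s)}_{-j,-a}$ in the definition of $S^{(r)}_{i,j}$: because they share the same filtration degree $r-1$ as the leading term, they can in principle interfere with the leading symbol of a product of $S$-generators. Ruling out such cancellations requires a careful bookkeeping of the degrees of all factors across the full monomial; once this is secured, $\Phi$ is an isomorphism degreewise, and the theorem follows.
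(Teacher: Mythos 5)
The paper does not actually prove this statement --- it is quoted from \cite{MNO} --- so there is no internal proof to compare against; your sketch is essentially the standard argument of \cite{MNO}: read off the symbols of the $S^{(r)}_{i,j}$ inside $\mathrm{gr}\,Y_\hbar(\mathfrak{gl}(2n))\cong U(\mathfrak{gl}(2n)[u])$, check that they reproduce the generators and brackets of $\mathfrak{sl}(2n)[u]^{\widetilde\tau}$, and then prove a PBW theorem by combining a spanning argument for ordered monomials with linear independence of their symbols. One small correction: the symmetry identity satisfied by the symbols is $\overline S^{(r)}_{i,j}-(-1)^r\overline S^{(r)}_{-j,-i}=0$, not $\overline S^{(r)}_{i,j}+(-1)^r\overline S^{(r)}_{-j,-i}=0$; for $r=1$ relation \eqref{SS5} reads $S^{(1)}_{i,j}+S^{(1)}_{-j,-i}=0$, which is the minus-sign version. (The paper's displayed general formula for $S^{(r)}_{i,j}$ is itself inconsistent in sign with its explicit formulas for $S^{(1)},S^{(2)},S^{(3)}$; taking the explicit ones as correct, the symbol of $S^{(r)}_{i,j}$ is exactly $f^{r-1}_{i,j}$, with no further sign adjustment needed.)

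More importantly, the ``main obstacle'' you identify is not there. In the filtration $\deg T^{(r)}_{i,j}=r-1$ --- the one compatible with $\deg S^{(r)}_{i,j}=r-1$ and with $\mathrm{gr}\,Y_\hbar(\mathfrak{gl}(2n))\cong U(\mathfrak{gl}(2n)[u])$ --- the quadratic correction $\hbar\sum_{s}\sum_a(\pm)\,T^{(r-s)}_{i,a}T^{(s)}_{-j,-a}$ has degree $(r-s-1)+(s-1)=r-2$, strictly below the degree $r-1$ of the leading term, so it can never contribute to the symbol of any monomial in the $S$'s and no bookkeeping of cancellations is required. The symbol of an ordered monomial $S^{(r_1)}_{i_1,j_1}\cdots S^{(r_m)}_{i_m,j_m}$ is simply $f^{r_1-1}_{i_1,j_1}\cdots f^{r_m-1}_{i_m,j_m}$, and since the $f^{r-1}_{i,j}$ (for a suitable half of the index pairs) form a basis of the Lie subalgebra $\mathfrak{sl}(2n)[u]^{\widetilde\tau}\subset\mathfrak{gl}(2n)[u]$, linear independence of these symbols is immediate from the PBW theorem for $U(\mathfrak{gl}(2n)[u])$. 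The genuine work, which you mention only in passing, is the spanning statement --- that arbitrary products of the $S^{(r)}_{i,j}$ can be rewritten as combinations of ordered monomials of no larger filtration degree using the reflection-equation relations and the symmetry relation --- and that is where the proof in \cite{MNO} spends its effort; as written, your proposal directs the ``careful bookkeeping'' at a non-problem while leaving the actual inductive straightening argument unexamined.
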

\section{New presentation of the twisted Yangian of type $D$}
We give a finite presentation of the twisted Yangian of type $D$.
\begin{Definition}\label{fin.pre}
We define $TY_\hbar(\mathfrak{so}(2n))$ as the associative algebra generated by
\begin{equation*}
\{H_{i,0},H_{j,1},X^\pm_{i,r}\mid 1\leq i\leq n,1\leq j\leq n-1,r=0,1\}
\end{equation*}
with the defining relations:
\begin{gather}
[H_{i,r},H_{j,s}]=0\text{ if }0\leq r,s\leq1,\label{q901}\\
[H_{i,0},X^\pm_{j,0}]=\pm a_{i,j}X^\pm_{j,0},\label{q901-1-1}\\
[H_{i,1}-\dfrac{\hbar}{2}H_{i,0}^2,X^\pm_{j,0}]=\pm a_{i,j}X^\pm_{j,1},\label{q901-1}\\
[X^+_{i,0},X^-_{j,0}]=\delta_{i,j}H_{i,0},\label{q901-2--1}\\
[X^+_{i,1},X^-_{i,0}]=H_{i,1}\text{ if }i\neq n\label{q901-2-0}\\
[X^+_{n,1},X^-_{n,0}]=H_{n-1,1}-\dfrac{\hbar}{4}(H_{n-1,0}^2-H_{n,0}^2)+\hbar X^-_{n-1,0}X^+_{n-1,0},\label{q901-3}\\
[X^\pm_{i,1},X^\pm_{i,0}]=\hbar(X^\pm_{i,0})^2,\label{q901-10}\\
[X^\pm_{i,0},X^\pm_{j,0}]=0\text{ if }a_{i,j}=0,\label{q901-17}\\
[X^\pm_{i,0},[X^\pm_{i,0},X^\pm_{j,0}]]=0\text{ if }a_{i,j}=-1,\label{q902}\\
[[X^+_{n,0},X^+_{n-2,0}],[X^+_{n,0},X^+_{n-1,1}]]=\hbar X^+_{n,0}[X^+_{n-2,0},[X^+_{n-1,0},X^+_{n,0}]],\label{q903-3}\\
[[X^-_{n,0},X^-_{n-2,0}],[X^-_{n,0},X^-_{n-1,1}]]=-\hbar [X^-_{n-2,0},[X^-_{n-1,0},X^-_{n,0}]]X^-_{n,0},\label{q903-3-}\\
[[X^+_{n-1,0},X^+_{n-2,0}],[X^+_{n-1,0},X^+_{n,1}]]=-\hbar [X^+_{n-2,0},[X^+_{n-1,0},X^+_{n,0}]]X^+_{n-1,0},\label{q903-7}\\
[[X^-_{n-1,0},X^-_{n-2,0}],[X^-_{n-1,0},X^-_{n,1}]]=\hbar X^-_{n-1,0}[X^-_{n-2,0},[X^-_{n-1,0},X^-_{n,0}]]\label{q903-7-}
\end{gather}
and
\begin{align}
&\quad[[X^+_{n-1,1},X^-_{n-1,1}],X^+_{n,0}]\nonumber\\
&=-2(n-1)\hbar [H_{n-1,1},X^+_{n,0}]-2\hbar X^+_{n,1}H_{n-1,0}-2(n-1)\hbar^2 X^+_{n,0}(H_{n-1,0}+ H_{n,0})\nonumber\\
&\quad+(4n-4)\hbar X^+_{n,1}+6(n-1)\hbar^2 X^+_{n,0},\label{q901-2-1}\\
&\quad[[X^+_{n-1,1},X^-_{n-1,1}],X^-_{n,0}]\nonumber\\
&=-2(n-1)\hbar [H_{n-1,1},X^-_{n,0}]+2\hbar H_{n-1,0}X^-_{n,1}+2(n-1)\hbar^2 (H_{n-1,0}+ H_{n,0})X^-_{n,0}\nonumber\\
&\quad-(4n-4)\hbar X^-_{n,1}-6(n-1)\hbar^2 X^-_{n,0},\label{q901-2-1-}\\
&\quad[X^+_{n,1},[X^+_{n-2,0},X^+_{n-1,0}]]-[[X^+_{n,1},X^+_{n-2,0}],X^+_{n-1,0}]\nonumber\\
&=-(n-1)\hbar[X^+_{n-2,0},[X^+_{n-1,0},X^+_{n,0}]]+\hbar X^+_{n-1,0}[X^+_{n-2,0},X^+_{n,0}]+\hbar X^+_{n,0}[X^+_{n-2,0},X^+_{n-1,0}],\label{q903-1}\\
&\quad[X^-_{n,1},[X^-_{n-2,0},X^-_{n-1,0}]]-[[X^-_{n,1},X^-_{n-2,0}],X^-_{n-1,0}]\nonumber\\
&=-(n-1)\hbar[X^-_{n-2,0},[X^-_{n-1,0},X^-_{n,0}]]+\hbar [X^-_{n-2,0},X^-_{n,0}]X^-_{n-1,0}-\hbar [X^-_{n-2,0},X^-_{n-1,0}]X^-_{n,0}.\label{q903-1-}
\end{align}
\end{Definition}
We can connect $TY_\hbar(\mathfrak{so}(2n))$ with the twisted Yangian $\widetilde{TY}_\hbar(\mathfrak{so}(2n))$.
\begin{Theorem}\label{Main}
There exits a homomorphism $\Phi\colon TY_\hbar(\mathfrak{so}(2n))\to \widetilde{TY}_\hbar(\mathfrak{so}(2n))$ given by
\begin{align*}
\Phi(H_{i,1})&=-S^{(2)}_{i,i}+S^{(2)}_{i+1,i+1}-\dfrac{i+n-1}{2}\hbar(S^{(1)}_{i,i}-S^{(1)}_{i+1,i+1})-\hbar S^{(1)}_{i,i}S^{(1)}_{i+1,i+1}\\
&\quad+\hbar\sum_{a\in I_i}S^{(1)}_{i,a}S^{(1)}_{a,i}-\hbar\sum_{a\in I_i}S^{(1)}_{i+1,a}S^{(1)}_{a,i+1},\\
\Phi(X^+_{i,1})&=\begin{cases}
-S^{(2)}_{i,i+1}-\dfrac{i+n-1}{2}\hbar S^{(1)}_{i,i+1}+\hbar\sum_{a\in I_i}S^{(1)}_{i,a}S^{(1)}_{a,i+1}&\text{ if }1\leq i\leq n-1,\\
-S^{(2)}_{n-1,-n}-(n-1)\hbar S^{(1)}_{n-1,-n}+\hbar\sum_{a\in I_{n-1}}S^{(1)}_{n-1,a}S^{(1)}_{a,-n}&\text{ if }i=n,
\end{cases}\\ 
\Phi(X^-_{i,1})&=\begin{cases}
-S^{(2)}_{i,i+1}-\dfrac{i+n-1}{2}\hbar S^{(1)}_{i,i+1}+\hbar\sum_{a\in I_i}S^{(1)}_{i,a}S^{(1)}_{a,i+1}&\text{ if }1\leq i\leq n-1,\\
-S^{(2)}_{-n,n-1}-(n-1)\hbar S^{(1)}_{-n,n-1}+\hbar\sum_{a\in I_{n-1}}S^{(1)}_{-n,a}S^{(1)}_{a,n-1}&\text{ if }i=n.
\end{cases}
\end{align*}
\end{Theorem}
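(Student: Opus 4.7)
The plan is first to extend the data in the statement by fixing images of the degree-zero generators, setting $\Phi(H_{i,0})=S^{(1)}_{i,i}-S^{(1)}_{i+1,i+1}$ for $i<n$, $\Phi(H_{n,0})=S^{(1)}_{n-1,n-1}+S^{(1)}_{n,n}$, $\Phi(X^+_{i,0})=S^{(1)}_{i,i+1}$ for $i<n$, $\Phi(X^+_{n,0})=S^{(1)}_{n-1,-n}$, and analogously for $X^-_{i,0}$; these are the images dictated by the identification of the Chevalley generators of $\mathfrak{so}(2n)$ with the linear span of the $S^{(1)}_{i,j}$. The task then is to verify that the images satisfy every defining relation of Definition~\ref{fin.pre} inside $\widetilde{TY}_\hbar(\mathfrak{so}(2n))$. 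I would organise the check by how many level-one generators occur on the left-hand side of the relation.

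The purely degree-zero relations \eqref{q901-1-1}, \eqref{q901-2--1}, \eqref{q901-17}, \eqref{q902}, and the $r=s=0$ case of \eqref{q901} reduce to the assertion that the $S^{(1)}_{i,j}$ with $i,j\in I_n$ span a copy of $\mathfrak{so}(2n)$ in $\widetilde{TY}_\hbar(\mathfrak{so}(2n))$, which is immediate from \eqref{SS5} and from \eqref{SS1} specialised to $r=1$. The ``one level-one'' relations \eqref{q901}, \eqref{q901-1}, \eqref{q901-2-0}, \eqref{q901-3}, \eqref{q901-10} are verified by direct expansion of the defining formulas: the quadratic shift $-\tfrac{\hbar}{2}H_{i,0}^2$ in \eqref{q901-1} is designed precisely to cancel the $\hbar\sum S^{(1)}S^{(1)}$ tail of $\Phi(H_{i,1})$ when commuted against an $X^\pm_{j,0}$ via \eqref{SS1}; similarly, the extra term $\hbar X^-_{n-1,0}X^+_{n-1,0}$ on the right-hand side of \eqref{q901-3} is the residue, after applying \eqref{SS1} and \eqref{SS4}, of the cubic $\hbar$-corrections in the $i=n$ branch of $\Phi(X^\pm_{n,1})$ involving $S^{(2)}_{n-1,-n}$ and $S^{(2)}_{-n,n-1}$.

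The main obstacle is the verification of the ``big'' relations \eqref{q903-3}--\eqref{q903-7-}, \eqref{q901-2-1}, \eqref{q901-2-1-}, \eqref{q903-1}, \eqref{q903-1-}, each of which unfolds, after substituting the defining formulas for $\Phi$, into a sum of many quadratic and cubic monomials in the $S^{(r)}_{i,j}$. A useful reduction is Theorem~\ref{gra}: passing to $\on{gr}\widetilde{TY}_\hbar(\mathfrak{so}(2n))\cong U(\mathfrak{sl}(2n)[u]^{\widetilde{\tau}})$, the top-degree part of each such relation becomes a classical identity in the twisted current algebra, which vanishes by Theorem~\ref{Mini} (compare \eqref{eq903-3}, \eqref{eq903-7}, \eqref{eq903-1}, \eqref{eq901-2-1}); thus only the lower-order $\hbar$-corrections need to be matched. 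I would carry out this bookkeeping by expanding each side with the defining formulas, repeatedly applying \eqref{SS1}--\eqref{SS4} to move everything into a normal form in the $S^{(r)}_{i,j}$, and using \eqref{SS5}--\eqref{SS6.5} to symmetrise the sums indexed by $a\in I_n$. The numerical constants $2(n-1)\hbar$, $4n-4$, $6(n-1)\hbar^2$ appearing on the right-hand side of \eqref{q901-2-1} arise precisely from counting the terms in these sums (a summation $\hbar\sum_{a\in I_n}$ produces a factor proportional to $2n$); matching the resulting coefficients exactly, particularly for the type-$D$ relations where the $S^{(1)}_{n-1,-n}$ and $S^{(2)}_{-n,n-1}$ contributions enter asymmetrically, is where the argument is most technically demanding and forms the principal obstacle.
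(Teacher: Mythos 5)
Your overall strategy coincides with the paper's: extend $\Phi$ to the degree-zero generators via the embedding of $\mathfrak{so}(2n)$ spanned by the $S^{(1)}_{i,j}$, dispose of the purely degree-zero relations by \eqref{SS5} and \eqref{SS1}, and verify the remaining relations by expanding the defining formulas and normal-ordering with \eqref{SS1}--\eqref{SS4}. Your one structural addition --- invoking Theorem~\ref{gra} so that the top-degree part of each relation is settled by Theorem~\ref{Mini} --- is sound but buys very little: the lower-order $\hbar$-corrections are produced by the very same commutators that generate the leading term, so they cannot be extracted without carrying out the full normal-ordering computation anyway, which is exactly what the paper does without the detour through the associated graded algebra.

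The genuine gap is that this computation, which \emph{is} the proof, is announced but not performed. For the relations \eqref{q901-2-1}, \eqref{q901-2-1-}, \eqref{q903-1}, \eqref{q903-1-}, \eqref{q903-3}--\eqref{q903-7-} and \eqref{q901-3}, the statement being proved is not merely that some identity of the right shape holds, but that the specific right-hand sides of Definition~\ref{fin.pre} --- the coefficients $2(n-1)\hbar$, $(4n-4)\hbar$, $6(n-1)\hbar^2$, the precise cubic corrections such as $\hbar X^+_{n,0}[X^+_{n-2,0},[X^+_{n-1,0},X^+_{n,0}]]$, and the ordering of the noncommuting factors --- are exactly what comes out of the expansion. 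These cannot be read off from a counting heuristic (``a sum over $a\in I_{n-1}$ contributes $2(n-1)$''); the paper's Appendix B needs the identities \eqref{SS4}, \eqref{SS6}, \eqref{SS6.5} and a term-by-term cancellation (e.g.\ the groupings $\eqref{eee1}_4-\eqref{eee3}_3=0$, $\eqref{rrr1}_1+\eqref{rrr7}_2=2\hbar\Phi(X^+_{n,1})S^{(1)}_{n,n}+\cdots$) to arrive at them, and a single sign or index error would falsify the claimed relation rather than the computation. Labelling this step ``the principal obstacle'' is accurate, but leaving it unresolved means the existence of $\Phi$ --- equivalently, the correctness of the defining relations of $TY_\hbar(\mathfrak{so}(2n))$ as written --- has not been established.
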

The proof of Theorem~\ref{Main} is given in the appendix.
\begin{Theorem}
The homomorphism $\Phi$ is an isomorphism.
\end{Theorem}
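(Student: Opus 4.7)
The plan is to run the standard filtered--graded comparison: I equip both sides with compatible filtrations, pass to the associated graded, and use Theorem~\ref{Mini} together with Theorem~\ref{gra} to sandwich $\Phi$ between two copies of $U(\mathfrak{sl}(2n)[u]^{\widetilde{\tau}})$.

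First, I endow $TY_\hbar(\mathfrak{so}(2n))$ with the filtration $F_\bullet$ defined by $\deg H_{i,0}=\deg X^\pm_{i,0}=0$ and $\deg H_{j,1}=\deg X^\pm_{i,1}=1$, treating $\hbar$ as having degree $0$. A term-by-term inspection of \eqref{q901}--\eqref{q903-1-} will show that every $\hbar$-correction on each right-hand side lies in strictly lower filtration than the principal bracket on the left: for instance, the right-hand side of \eqref{q901-3} has its degree-$1$ piece $H_{n-1,1}$ together with corrections of degree $0$, and the right-hand side of \eqref{q901-2-1} sits in $F_1$ while the left-hand side is in $F_2$. Taking leading symbols should therefore yield exactly the relations \eqref{eq901}--\eqref{eq903-7} of Theorem~\ref{Mini}, giving a surjection
\[
\pi\colon U(\mathfrak{sl}(2n)[u]^{\widetilde{\tau}}) \twoheadrightarrow \operatorname{gr} TY_\hbar(\mathfrak{so}(2n)).
\]
On the other side, the twisted Yangian $\widetilde{TY}_\hbar(\mathfrak{so}(2n))$ with $\deg S^{(r)}_{i,j}=r-1$ has associated graded isomorphic to $U(\mathfrak{sl}(2n)[u]^{\widetilde{\tau}})$ by Theorem~\ref{gra}.

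Next, I verify that $\Phi$ respects these filtrations: every term in the formula for $\Phi(H_{i,1})$ or $\Phi(X^\pm_{i,1})$ is either an $S^{(2)}_{*,*}$ (filtration degree $1$) or a quadratic product $\hbar S^{(1)}_{*,*}S^{(1)}_{*,*}$ (also degree $1$, since $\deg S^{(1)}=0$). Hence $\Phi$ is filtered and induces a graded map $\operatorname{gr}\Phi\colon \operatorname{gr} TY_\hbar(\mathfrak{so}(2n)) \to U(\mathfrak{sl}(2n)[u]^{\widetilde{\tau}})$. A direct symbol computation, using Theorem~\ref{gra} to identify $S^{(r)}_{i,j}$ modulo $F_{r-2}$ with the corresponding twisted-current element, should show that $\operatorname{gr}\Phi\circ\pi$ fixes every Chevalley-type generator of $U(\mathfrak{sl}(2n)[u]^{\widetilde{\tau}})$ and is therefore the identity. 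Consequently $\pi$ is injective, hence an isomorphism, and $\operatorname{gr}\Phi$ is an isomorphism as well. To conclude for $\Phi$ itself, I invoke the standard filtered-algebra criterion: since both filtrations are exhaustive and separated, a filtered map whose associated graded is an isomorphism is itself an isomorphism. Surjectivity of $\Phi$ can also be checked directly by using \eqref{SS1}--\eqref{SS4} to climb inductively from the level-$1$ and level-$2$ generators to $S^{(r)}_{i,j}$ for arbitrary $r$.

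The main obstacle, as I see it, is the first step: several asymmetric right-hand sides in Definition~\ref{fin.pre}, notably \eqref{q901-2-1}--\eqref{q903-1-}, contain terms like $-2(n-1)\hbar[H_{n-1,1},X^+_{n,0}]$ whose naive total degree matches that of the left-hand side, so I must check carefully, relation by relation, that the filtration degrees drop strictly and that the surviving symbols match those of Theorem~\ref{Mini} on the nose. A secondary delicate point is the explicit matching of the leading symbols of $\Phi(H_{i,1})$ and $\Phi(X^\pm_{i,1})$ with the correct twisted-current generators under the isomorphism of Theorem~\ref{gra}; this requires tracking the signs and the combinatorial shifts hidden in the definition of $S^{(r)}_{i,j}$, and is where most of the actual labor will concentrate.
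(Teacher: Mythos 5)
Your proposal is essentially the paper's own argument: the authors filter $TY_\hbar(\mathfrak{so}(2n))$ by $\deg H_{i,r}=\deg X^\pm_{i,r}=r$, note that the leading symbols of the defining relations recover \eqref{eq901}--\eqref{eq903-7} so that Theorem~\ref{Mini} gives a surjection onto $\operatorname{gr}TY_\hbar(\mathfrak{so}(2n))$, use Theorem~\ref{gra} on the target, check $\operatorname{gr}\Phi$ fixes the generators, and conclude that $\operatorname{gr}\Phi$ (hence $\Phi$) is an isomorphism. The only difference is that you flag explicitly the relation-by-relation degree checks (e.g.\ for \eqref{q901-2-1}) that the paper leaves implicit; these do go through, so your plan is sound.
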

\begin{proof}
Let us take a grading of $TY_\hbar(\mathfrak{so}(2n))$ by $\text{deg}(H_{i,r})=\text{deg}(X^\pm_{i,r})=r$. Then, $TY_\hbar(\mathfrak{so}(2n))$ becomes a filtered algebra. We denote the graded associated algebra of the twisted Yangian $TY_\hbar(\mathfrak{so}(2n))$ by $grTY_\hbar(\mathfrak{so}(2n))$. Since $\Phi$ is a homomorphism of filtered algebras, $\Phi$ induces a homomorphism 
\begin{gather*}
gr\Phi\colon grTY_\hbar(\mathfrak{so}(2n))\to gr\widetilde{TY}_\hbar(\mathfrak{so}(2n))=U(\mathfrak{sl}(2n)[u]^\tau).
\end{gather*}
By the definition of $\Phi$, we have
\begin{align*}
gr\Phi(H_{i,r})&=H_{i,r},\ gr\Phi(X^\pm_{i,r})=X^\pm_{i,r}.
\end{align*}
Since $U(\mathfrak{sl}(2n)[u]^\tau)$ is generated by $H_{i,r},X^\pm_{i,r}$, $gr\Phi$ is surjective. By the definition of $TY_\hbar(\mathfrak{so}(2n))$, \eqref{eq901}-\eqref{eq903-7} hold in $grTY_\hbar(\mathfrak{so}(2n))$. By Theorem~\ref{Mini}, $gr\Phi$ is injective.
Since $gr\Phi$ is an isomorphism, $\Phi$ also becomes an isomorphism.
\end{proof}

\section{Suggestion of a definition of the twisted affine Yangian of type $D$}
Guay \cite{Gu1} defined the affine Yangian associated with $\widehat{\mathfrak{sl}}(n)$ by extending the minimalistic presentation of the Yangian associated with $\mathfrak{sl}(n)$. Similarly, we expect that the twisted affine Yangian of type $D$ can be defined by extending the presentation given in Theorem~\ref{fin.pre}. 

The Cartan matrix of $\widehat{\mathfrak{so}}(2n)$ is given by $A_{i,j}=\begin{cases}
2\delta_{i,j}-\delta_{i,j+1}-\delta_{i+1,j}&\text{ if }1\leq i,j\leq n-1,\\
2\delta_{i,j}-\delta_{i,n-2}-\delta_{j,n-2}&\text{ if }i\text{ or }j=n,\\
2\delta_{i,j}-\delta_{i,2}-\delta_{j,2}&\text{ if }i\text{ or }j=0
\end{cases}$
for $0\leq i,j\leq n$.
\begin{Definition}
We define $TY_\hbar(\widehat{\mathfrak{so}}(2n))$ as the associative algebra generated by
\begin{equation*}
\{H_{i,0},H_{j,1},X^\pm_{i,r}\mid 0\leq i\leq n,1\leq j\leq n-1,r=0,1\}
\end{equation*}
with the defining relations \eqref{q901}, \eqref{q901-2--1},\eqref{q901-3}, \eqref{q901-10}, \eqref{q901-17}-\eqref{q903-1-} and
\begin{gather}
[H_{i,0},X^\pm_{j,0}]=\pm A_{i,j}X^\pm_{j,0},\label{Q901-1-1}\\
[H_{i,1}-\dfrac{\hbar}{2}H_{i,0}^2,X^\pm_{j,0}]=\pm A_{i,j}X^\pm_{j,1},\label{Q901-1}\\
[X^+_{i,1},X^-_{i,0}]=H_{i,1}\text{ if }i\neq 0,n\label{Q901-2-0}\\
[X^+_{0,1},X^-_{0,0}]=H_{1,1}-\dfrac{\hbar}{4}(H_{1,0}^2-H_{0,0}^2)+\hbar X^-_{1,0}X^+_{1,0},\label{Q901-3}\\
[[X^+_{0,0},X^+_{2,0}],[X^+_{0,0},X^+_{1,1}]]=\hbar X^+_{0,0}[X^+_{2,0},[X^+_{1,0},X^+_{0,0}]],\label{Q903-3}\\
[[X^-_{0,0},X^-_{2,0}],[X^-_{0,0},X^-_{1,1}]]=-\hbar [X^-_{2,0},[X^-_{1,0},X^-_{0,0}]]X^-_{0,0},\label{Q903-3-}\\
[[X^+_{1,0},X^+_{2,0}],[X^+_{1,0},X^+_{0,1}]]=-\hbar [X^+_{2,0},[X^+_{1,0},X^+_{0,0}]]X^+_{1,0},\label{Q903-7}\\
[[X^-_{1,0},X^-_{2,0}],[X^-_{1,0},X^-_{0,1}]]=\hbar X^-_{1,0}[X^-_{2,0},[X^-_{1,0},X^-_{0,0}]]\label{Q903-7-}
\end{gather}
and
\begin{align}
&\quad[[X^+_{1,1},X^-_{1,1}],X^+_{0,0}]\nonumber\\
&=-2\hbar [H_{1,1},X^+_{0,0}]-2\hbar X^+_{0,1}H_{1,0}-2\hbar^2 X^+_{0,0}(H_{1,0}+ H_{0,0})\nonumber\\
&\quad+4\hbar X^+_{0,1}+6\hbar^2 X^+_{0,0},\label{Q901-2-1}\\
&\quad[[X^+_{1,1},X^-_{1,1}],X^-_{0,0}]\nonumber\\
&=-2\hbar [H_{1,1},X^-_{0,0}]+2\hbar H_{1,0}X^-_{0,1}+2\hbar^2 (H_{1,0}+ H_{0,0})X^-_{0,0}\nonumber\\
&\quad-4\hbar X^-_{0,1}-6\hbar^2 X^-_{0,0},\label{Q901-2-1-}\\
&\quad[X^+_{0,1},[X^+_{2,0},X^+_{1,0}]]-[[X^+_{0,1},X^+_{2,0}],X^+_{1,0}]\nonumber\\
&=-\hbar[X^+_{2,0},[X^+_{1,0},X^+_{0,0}]]+\hbar X^+_{1,0}[X^+_{2,0},X^+_{0,0}]+\hbar X^+_{0,0}[X^+_{2,0},X^+_{1,0}],\label{Q903-1}\\
&\quad[X^-_{0,1},[X^-_{2,0},X^-_{1,0}]]-[[X^-_{0,1},X^-_{2,0}],X^-_{1,0}]\nonumber\\
&=-\hbar[X^-_{2,0},[X^-_{1,0},X^-_{0,0}]]+\hbar [X^-_{2,0},X^-_{0,0}]X^-_{1,0}-\hbar [X^-_{2,0},X^-_{1,0}]X^-_{0,0}.\label{Q903-1-}
\end{align}
\end{Definition}
The Dynkin diagram of $\widehat{\mathfrak{so}}(2n)$ can be regarded as a combination of two Dynkin diagrams of $\mathfrak{so}(2n)$. Thus, we can propose the following conjecture.
\begin{Conjecture}
The associative algebra $TY_\hbar(\widehat{\mathfrak{so}}(2n))$ becomes a coideal of the affine Yangian associated with $\widehat{\mathfrak{sl}}(n)$ and can be regarded as the twisted affine Yangian of type $D$.
\end{Conjecture}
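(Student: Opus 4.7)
The plan is to follow the same three-step strategy that succeeded for $TY_\hbar(\mathfrak{so}(2n))$ in Section 6: construct an explicit map, prove it is an isomorphism via a graded/filtered argument, and then use an explicit coproduct formula to verify the coideal property. Concretely, first I would build a homomorphism
\[
\widehat{\Phi}\colon TY_\hbar(\widehat{\mathfrak{so}}(2n)) \to Y_\hbar(\widehat{\mathfrak{sl}}(2n))
\]
by extending the formulas of Theorem~\ref{Main}. The affine Dynkin diagram of $\widehat{\mathfrak{so}}(2n)$ has two ``fork'' nodes (at $0$ and $n$), each forming a local $D_4$-shaped subdiagram inside $\widehat{\mathfrak{sl}}(2n)$, so the images of $X^\pm_{0,r}, H_{0,r}$ should be given by formulas obtained from those of $X^\pm_{n,r}, H_{n,r}$ by the diagram rotation that exchanges the two forks. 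The natural definition of such a map ought to come from composing a suitable automorphism of $Y_\hbar(\widehat{\mathfrak{sl}}(2n))$ (coming from the rotation of the affine Dynkin diagram) with the finite map $\Phi$ on each $D$-type sub-Yangian.

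Next I would verify the defining relations. Most of the relations \eqref{q901}, \eqref{Q901-1-1}--\eqref{Q901-10}, \eqref{q901-17}--\eqref{q903-1-} split into two disjoint blocks (one involving node $n$, one involving node $0$) each of which is already checked in the proof of Theorem~\ref{Main}. What is truly new in the affine case is the interaction between the two forks through the middle of the Dynkin diagram — in particular, the Serre-type identities \eqref{q902} and the ``cubic'' relations \eqref{Q903-3}--\eqref{Q901-2-1-} will have to be cross-checked whenever a local relation at one fork involves a simple root adjacent to the other fork (and for small $n$, where the two forks actually overlap, additional care is needed; the assumption $n>4$ at the start of Section~2 should be revisited here, and I would likely need to impose a stronger lower bound on $n$ in the affine setting).

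Then I would prove $\widehat{\Phi}$ is injective, hence an isomorphism onto its image, by the same filtered argument as in Section~6: place a grading on $TY_\hbar(\widehat{\mathfrak{so}}(2n))$ by degree in the second subscript $r$, and compare with the corresponding twisted affine current algebra $\mathfrak{sl}(2n)[u,u^{-1}]^{\widetilde\tau}$ via an affine analog of Theorem~\ref{Mini}. Establishing this affine finite-presentation lemma is where most of the combinatorial work lies; it should follow from the finite case by a careful loop-versus-polynomial bookkeeping, but several of the lemmas analogous to Lemma~\ref{gat} will need to be redone in the two-sided (Laurent) setting.

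Finally, to prove the coideal property, I would use Guay-Nakajima-Wendlandt's coproduct $\Delta$ on $Y_\hbar(\widehat{\mathfrak{sl}}(2n))$ \cite{GNW} and check, generator by generator, that
\[
\Delta\bigl(\widehat{\Phi}(g)\bigr) \in \widehat{\Phi}\bigl(TY_\hbar(\widehat{\mathfrak{so}}(2n))\bigr) \otimes Y_\hbar(\widehat{\mathfrak{sl}}(2n))
\]
for each generator $g$. This is the hardest step and the main obstacle: the GNW coproduct is known only in a rather involved form, and extracting the twisted subalgebra requires an explicit reshuffling of Drinfeld generators akin to the finite RTT presentation used in Section~5. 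In practice I would first do the level-zero (classical) check, confirming that $\widehat{\Phi}$ restricted to degree zero lands in a coideal subalgebra of $\widehat{\mathfrak{sl}}(2n)$, and then argue by induction on the filtration degree, reducing the check at each level to a finite list of bracket identities — most of which are already covered by the analogous calculations for the finite twisted Yangian.
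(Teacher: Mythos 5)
This statement is stated in the paper as a \emph{Conjecture}; the paper offers no proof of it, so there is no argument of the authors' to compare yours against. What you have written is likewise not a proof but a research outline, and you are candid about where the genuine difficulties sit. Three of them deserve to be named explicitly. First, your injectivity step relies on an affine analogue of Theorem~\ref{Mini}, i.e.\ a finite presentation of $U(\mathfrak{sl}(2n)[u,u^{-1}]^{\widetilde{\tau}})$ (or of the relevant twisted loop algebra); nothing in the paper supplies this, and the Laurent-polynomial versions of Lemma~\ref{gat} and its companions are not routine transcriptions of the polynomial case, since the spanning-set arguments there use the grading by non-negative degree in an essential way. Second, even granting such a presentation, the filtered-to-graded argument of Section~6 needs the associated graded of the target to be identified with the twisted loop algebra; in the affine setting the analogue of Theorem~\ref{gra} (a PBW-type statement for a twisted \emph{affine} Yangian) is precisely part of what is being conjectured, so your argument is circular at that point unless you find an independent route. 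Third, the coideal property is the actual content of the conjecture, and checking $\Delta(\widehat{\Phi}(g))\in \widehat{\Phi}(TY_\hbar(\widehat{\mathfrak{so}}(2n)))\otimes Y_\hbar(\widehat{\mathfrak{sl}}(2n))$ against the coproduct of \cite{GNW} is not reducible to the finite-type computations of the appendix, because that coproduct is not of Drinfeld--Jimbo type on the Drinfeld generators and has no finite-type counterpart in Section~6 (the finite theorem proves an isomorphism with $\widetilde{TY}_\hbar(\mathfrak{so}(2n))$, not a coideal statement). So while your strategy is the natural one and mirrors what the authors themselves presumably intend, each of its three stages contains an unproved substantive claim; as it stands the proposal does not establish the conjecture.
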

\appendix
\section{Proof of Lemma~\ref{gat}}
This section is devoted to the proof of Lemma~\ref{gat}. As for $[X_{i,j,r},X_{p,q,s}]$, this follows from the following formula:
\begin{gather}
[X_{i,j,r},X_{p,q,s}]=\delta_{j,p}X_{i,q,r+s}-\delta_{i,q}X_{p,j,r+s}.\label{gat1}
\end{gather}
We note that the relations \eqref{Eq901-10-0}, \eqref{Eq901-17} and \eqref{Eq902} coincide with the defining relations of $\mathfrak{n}_+(n)$. Since $x_{i,j,r}$ corresponds to $E_{i,j}u^r\in\mathfrak{n}_+(n)$, we find that $[X_{i,j,r},X_{p,q,s}]$ is spanned by the given set.

As for other elements, we need to prepare several formulas. 
\begin{Lemma}
The following relations hold:
\begin{gather}
[[X^+_{i,0}, X^+_{j,r}], [X^+_{i,0}, X^+_{k,s}]] = 0 \text{ if } a_{i,j} = a_{i,k} = -1\text{ and }(j,k)\neq (n-1,n),(n,n-1),\label{Eq903-13}\\
[X^+_{n-1,s},[X^+_{n-2,r},X^+_{n,u}]]=(-1)^{r+u}[X^+_{n,r+s+u},[X^+_{n-2,0},X^+_{n-1,0}]],\label{Eq903-16}\\
[X^+_{n-2,r},[X^+_{n-1,s},X^+_{n,u}]]=(1-(-1)^{s+u})(-1)^{r+s}[[X^+_{n-2,0},X^+_{n-1,0}],X^+_{n,r+s+u}],\label{Eq903-17}\\
[[X^+_{n-2,s},X^+_{n-1,u}],X^+_{n,r}]=(-1)^{s+u}[[X^+_{n-2,0},X^+_{n-1,0}],X^+_{n,r+s+u}].\label{Eq903-19}
\end{gather}
\end{Lemma}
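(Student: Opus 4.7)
The plan is to derive the four identities from the Jacobi identity combined with the shift-invariance relations \eqref{Eq901-10-0} and \eqref{Eq901-10}, the Serre-type relations \eqref{Eq901-17}--\eqref{Eq902}, and the cubic relations \eqref{Eq903-1} and \eqref{Eq903-3} already in the definition of $L$. The key observation is that \eqref{Eq901-10-0} lets me freely redistribute shifts within any bracket whose underlying pair avoids $(n-1,n)$ and $(n,n-1)$, while \eqref{Eq901-10} does the same at the cost of one sign per unit of shift transferred across the special pair. Consequently, all the sign factors appearing on the right-hand sides of \eqref{Eq903-16}, \eqref{Eq903-17} and \eqref{Eq903-19} should arise from applying \eqref{Eq903-1} and \eqref{Eq901-10} a controlled number of times.

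For \eqref{Eq903-13} I would induct on $r+s$. The base case $r=s=0$ is a classical finite-type Serre identity, obtained by expanding $[X^+_{i,0},[X^+_{i,0},[X^+_{j,0},X^+_{k,0}]]]=0$ via Jacobi and applying \eqref{Eq902} twice. For the inductive step, the assumption $(j,k)\ne(n-1,n),(n,n-1)$ together with $a_{i,j}=a_{i,k}=-1$ guarantees that at least one of the pairs occurring in the nested commutator lies outside the exceptional pair of \eqref{Eq901-10-0}, so a unit of shift can be moved from $X^+_{j,r}$ or $X^+_{k,s}$ into the $X^+_{i,0}$ entry; the resulting expression is then controlled by the inductive hypothesis and \eqref{Eq902}.

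The three identities \eqref{Eq903-16}, \eqref{Eq903-17}, \eqref{Eq903-19} I would treat uniformly in three stages. Stage one applies \eqref{Eq901-10-0} to the pairs $(n-2,n-1)$ and $(n-2,n)$ in order to push all shifts off the $X^+_{n-2}$ slot, which is always admissible since $n-2$ is never paired with $n-1$ or $n$ in the excluded sense. Stage two combines Jacobi with \eqref{Eq903-1} to swap the roles of $X^+_{n-1}$ and $X^+_n$, producing the parity sign $(-1)^{s+u}$. Stage three uses \eqref{Eq901-10} to sweep the remaining shifts onto the final chosen generator, absorbing any residual parity into the signs. For \eqref{Eq903-17} the prefactor $(1-(-1)^{s+u})$ reflects \eqref{Eq901-17-1}: when $s+u$ is even, the inner bracket $[X^+_{n-1,s},X^+_{n,u}]$ vanishes and both sides are zero, while when $s+u$ is odd the Jacobi expansion yields two equal contributions that combine to the factor~$2$.

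The main obstacle will be the accurate tracking of signs, particularly in \eqref{Eq903-17}, where the distinction between the two parity regimes must emerge from the combined use of \eqref{Eq901-10} and \eqref{Eq901-17-1}. In \eqref{Eq903-19} the entries $X^+_{n-1}$ and $X^+_n$ are separated by the outer bracket with $X^+_{n-2}$, so only the shifts between $X^+_{n-1}$ and $X^+_n$ contribute signs, yielding the clean $(-1)^{s+u}$; the argument for \eqref{Eq903-16} is almost parallel, with an additional Jacobi step needed to match the nested form of the right-hand side. I do not expect new relations beyond those in the definition of $L$ to be required, but the case analysis by the parities of $r$, $s$, $u$ is likely to be the most labor-intensive part of the write-up.
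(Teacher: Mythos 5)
Your proposal is correct and follows essentially the same route as the paper: all four identities are obtained from the Jacobi identity together with the shift-transfer relations \eqref{Eq901-10-0}, \eqref{Eq901-10}, the swap relation \eqref{Eq903-1}, and the Serre-type relations \eqref{Eq901-17}, \eqref{Eq901-17-1}, \eqref{Eq902}, with the parity signs and the prefactor $(1-(-1)^{s+u})$ arising exactly as you describe. The only simplification worth noting is that your induction for \eqref{Eq903-13} is unnecessary: since \eqref{Eq901-17}, \eqref{Eq901-10-2} and \eqref{Eq902} are already imposed for arbitrary shift indices, a single Jacobi expansion yields $[[X^+_{i,0},X^+_{j,r}],[X^+_{i,0},X^+_{k,s}]]=-[[X^+_{i,0},X^+_{j,r}],[X^+_{i,0},X^+_{k,s}]]$ directly.
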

\begin{proof}
The relation \eqref{Eq903-13} naturally follows from \eqref{Eq901-17} and \eqref{Eq902}.
We only prove the relation \eqref{Eq903-16} since
\eqref{Eq903-17} and \eqref{Eq903-19} can be proven in a similar way to \eqref{Eq903-16}.
The following relation holds:
\begin{align*}
&\quad[X^+_{n-1,s},[X^+_{n-2,r},X^+_{n,u}]]=[X^+_{n-1,s},[X^+_{n-2,r+u},X^+_{n,0}]]=(-1)^s[X^+_{n,0},[X^+_{n-2,r+u},X^+_{n-1,s}]]\\
&=(-1)^s[X^+_{n,0},[X^+_{n-2,r+s+u},X^+_{n-1,0}]]=(-1)^s[X^+_{n-1,0},[X^+_{n-2,r+s+u},X^+_{n,0}]]\\
&=(-1)^s[X^+_{n-1,0},[X^+_{n-2,0},X^+_{n,r+s+u}]]=(-1)^{r+u}[X^+_{n,r+s+u},[X^+_{n-2,0},X^+_{n-1,0}]],
\end{align*}
where the 1st, 3rd and 5th equalities are due to \eqref{Eq901-10-0} and the 2nd, 4th and 6th equalities are due to \eqref{Eq903-1}.
\begin{comment}
By \eqref{Eq901-10-0} and \eqref{Eq903-1}, we obtain
\begin{align*}
&\quad[X^+_{n-2,r},[X^+_{n-1,s},X^+_{n,u}]]=[[X^+_{n-2,r},X^+_{n-1,s}],X^+_{n,u}]+[X^+_{n-1,s},[X^+_{n-2,r},X^+_{n,u}]]\\
&=[[X^+_{n-2,r+s},X^+_{n-1,0}],X^+_{n,u}]+(-1)^{r+u}[X^+_{n,r+s+u},[X^+_{n-2,0},X^+_{n-1,0}]]\\
&=(-1)^{u}[[X^+_{n-2,r+s},X^+_{n,u}],X^+_{n-1,0}]+(-1)^{r+u}[X^+_{n,r+s+u},[X^+_{n-2,0},X^+_{n-1,0}]]\\
&=(-1)^{r+s}[[X^+_{n-2,0},X^+_{n-1,0}],X^+_{n,r+s+u}]+(-1)^{r+u}[X^+_{n,r+s+u},[X^+_{n-2,0},X^+_{n-1,0}]]\\
&=(1-(-1)^{s+u})(-1)^{r+s}[[X^+_{n-2,0},X^+_{n-1,0}],X^+_{n,r+s+u}].
\end{align*}
\end{comment}
\end{proof}
\begin{Lemma}
The following relations hold:
\begin{gather}
[X_{i,j,r},X^+_{n,s}]=0\text{ if }j\leq n-2,\label{Eq903-9.5}\\
[X_{i,n-1,0},[X^+_{n-2,r},X^+_{n,s}]]=0,\label{Eq903-12}\\
[X_{i,n-1,0},[X_{i,n,0},X^+_{n,2r}]]=0\text{ for }i<n-1,\label{Eq903-9}\\
[X_{j,n,s},X^+_{n,r}]=
(-1)^s[X_{j,n,0},X^+_{n,r+s}]\text{ for }j\leq n-1,\label{Eq903-10}\\
[X_{i,n-1,s},X^+_{n,r}]=[X_{i,n-1,0},X^+_{n,r+s}]\text{ for }i<n-1,\label{Eq903-11}\\
[X_{i,n-1,s},[X_{j,n-1,0},X^+_{n,r}]]=0\text{ for }i\leq j<n-1,\label{Eq903-14-0}\\
[X_{i,n,s}, [X_{j-1,n-1,0}, X^+_{n,r}]]
    = -(-1)^r[X_{i,n-1,s}, [X_{j-1,n,0}, X^+_{n,r}]]\text{ for }i<n-1,j<n, \label{Eq903-14} \\
[X_{i,n,s}, [X_{j,n,0}, X^+_{n,r}]] = 0 \text{ for } j < n-1, \label{Eq903-18}\\
[X_{i,n,s},[X_{j,n-1,0},X^+_{n,r}]]=(-1)^s[X_{i,n,0},[X_{j,n-1,0},X^+_{n,r+s}]]\text{ for }i,j<n-1,\label{Eq903-20}\\
[X_{i,n-1,s},[X_{j,n,0},X^+_{n,r}]]=[X_{i,n-1,0},[X_{j,n,0},X^+_{n,r+s}]]\text{ for }i,j<n-1,\label{Eq903-21}\\
[X_{i,n,0}, [X^+_{n-2,s}, X^+_{n,r}]]=0\text{ for }i<n-2,\label{Eq903-22}\\
[X^+_{n-2,s}, [X_{i,n-1,0}, X^+_{n,r}]]=0\text{ for }i<n-2,\label{Eq903-23}\\
[X_{i,n,0}, [X_{i,n,0}, X^+_{n,r}]] = 0 \text{ for } i \leq n-1. \label{Eq903-18-2}
\end{gather}
\end{Lemma}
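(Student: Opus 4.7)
The plan is to prove the listed identities in the stated order, using the Jacobi identity, the defining relations of $L$ (especially \eqref{Eq901-10-0}--\eqref{Eq901-10-2}, \eqref{Eq901-17}--\eqref{Eq901-17-2}, \eqref{Eq902}, \eqref{Eq903-1}--\eqref{Eq903-8.5}), and the preliminary identities \eqref{Eq903-13}--\eqref{Eq903-19} already proved. The guiding idea is that $X_{i,j,r}$ for $j\leq n-1$ is an iterated $\ad(X^+_{u,0})$ ($u\leq n-2$) applied to $X^+_{j-1,r}$, so it commutes with anything that commutes with each building block; this reduces most identities to a finite check on short commutators.

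First I would dispose of \eqref{Eq903-9.5}: since $a_{u,n}=0$ for $u\leq n-2$, every factor $X^+_{u,0}$ appearing in $X_{i,j,r}$ with $j\leq n-2$ commutes with $X^+_{n,s}$ by \eqref{Eq901-17}, and $[X^+_{j-1,r},X^+_{n,s}]=0$ as well, so iterated Jacobi kills the bracket. Identities \eqref{Eq903-12} and \eqref{Eq903-23} then follow by Jacobi together with \eqref{Eq903-9.5} and the preliminary identities \eqref{Eq903-16}--\eqref{Eq903-19}, which collapse the surviving nested brackets. For \eqref{Eq903-22} I would similarly use Jacobi to move $X_{i,n,0}$ past $X^+_{n-2,s}$, reducing to brackets annihilated by \eqref{Eq903-9.5} and $[X^+_{u,0},X^+_{n-2,s}]=0$ for $u\leq n-4$. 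The Serre-type identity \eqref{Eq903-9} then comes out of \eqref{Eq903-12} combined with the even-spectral-parameter Serre relation \eqref{Eq901-17-1}.

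The next block is the spectral-shift identities \eqref{Eq903-10}, \eqref{Eq903-11}, \eqref{Eq903-20}, \eqref{Eq903-21}. I would prove these by induction on $s$, using \eqref{Eq901-10-0} and \eqref{Eq901-10} (the derivation-type relations that trade one unit of spectral parameter between two arguments). The sign $(-1)^s$ appearing in \eqref{Eq903-10} and \eqref{Eq903-20} but not in \eqref{Eq903-11} and \eqref{Eq903-21} is forced by \eqref{Eq901-10}: the outer $\ad(X^+_{n,0})$ present in $X_{j,n,s}$ produces a sign flip at each step, whereas $X_{i,n-1,s}$ has no such outer factor. Identity \eqref{Eq903-14-0} follows by expanding $X_{i,n-1,s}$ as a nested commutator of $X^+_{u,0}$'s acting on $X^+_{n-2,s}$, applying Jacobi, and repeatedly using \eqref{Eq901-17}, \eqref{Eq903-7}, and \eqref{Eq903-9.5}.

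The remaining identities \eqref{Eq903-14}, \eqref{Eq903-18}, \eqref{Eq903-18-2} are the most delicate, and I expect \eqref{Eq903-14} to be the principal obstacle. The plan there is to use Jacobi to push $X_{i,n,s}$ inside the double bracket, then apply \eqref{Eq903-1} (the key twisted Serre relation) together with \eqref{Eq903-16}--\eqref{Eq903-19} to identify the surviving terms with the right-hand side. The bookkeeping is heavy because several deeply nested commutators of length four or five appear, and one must verify that the cross terms cancel modulo \eqref{Eq903-3}, \eqref{Eq903-7}, and the higher Serre relations \eqref{Eq903-4}, \eqref{Eq903-5}, \eqref{Eq903-8}, \eqref{Eq903-8.5}. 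Once \eqref{Eq903-14} is in hand, \eqref{Eq903-18} follows from it by taking $j=i$ after an application of \eqref{Eq903-14-0}, and \eqref{Eq903-18-2} is the specialization $s=0$ of \eqref{Eq903-18} combined with $[X_{i,n,0},X_{i,n,0}]=0$.
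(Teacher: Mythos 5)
Your overall strategy for most of the list --- expand each $X_{i,j,r}$ as an iterated $\ad(X^+_{u,0})$, apply the Jacobi identity, and reduce to short commutators killed by the Serre-type defining relations and the preliminary identities \eqref{Eq903-13}--\eqref{Eq903-19} --- is the same as the paper's, and it carries you through \eqref{Eq903-9.5}, \eqref{Eq903-12}, \eqref{Eq903-9}, the spectral-shift identities \eqref{Eq903-10}, \eqref{Eq903-11}, \eqref{Eq903-20}, \eqref{Eq903-21}, as well as \eqref{Eq903-14-0}, \eqref{Eq903-22}, \eqref{Eq903-23} and \eqref{Eq903-14}, in essentially the intended way. (One slip of wording: $X_{j,n,s}$ contains no ``outer $\ad(X^+_{n,0})$''; the sign $(-1)^s$ in \eqref{Eq903-10} arises because transferring the spectral parameter across the $(n-1,n)$ edge uses \eqref{Eq901-10}, which carries a minus sign, unlike \eqref{Eq901-10-0}. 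Also, \eqref{Eq903-14} is less of an obstacle than you expect: once \eqref{Eq903-14-0} kills one Jacobi term, the other is handled by \eqref{gat1}, \eqref{Eq903-9.5} and a single application of \eqref{Eq903-1}.)

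The genuine gap is your derivation of \eqref{Eq903-18}. Relation \eqref{Eq903-14} converts the configuration $[X_{i,n,s},[X_{j-1,n-1,0},X^+_{n,r}]]$ into $[X_{i,n-1,s},[X_{j-1,n,0},X^+_{n,r}]]$, and \eqref{Eq903-14-0} kills $[X_{i,n-1,s},[X_{j,n-1,0},X^+_{n,r}]]$; no specialization ``$j=i$'' of these produces the remaining configuration $[X_{i,n,s},[X_{j,n,0},X^+_{n,r}]]$, in which \emph{both} chains end at $n$. Trying to reach it by writing $X_{i,n,s}=[X_{i,n-1,s},X^+_{n-1,0}]$ and applying Jacobi generates the new term $[X_{i,n-1,s},[X_{j,n,0},[X^+_{n-1,0},X^+_{n,r}]]]$, which does not close up within the identities you have. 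The paper proves \eqref{Eq903-18} by a separate argument: first reduce to $s=0$ and $i=j-1$ using $[X_{i,n,s},X_{j,n,0}]=0$ (from \eqref{gat1}) and \eqref{Eq903-10}, then run a downward induction on $j$ whose base cases are exactly the defining relations \eqref{Eq903-8} (for $j=n-2$, $i=n-3$) and \eqref{Eq903-8.5} (for $i=j=n-2$); those two relations are in the presentation precisely to seed this induction, and your plan never invokes them for \eqref{Eq903-18}. Relatedly, \eqref{Eq903-18-2} is not merely the $s=0$, $j=i$ specialization of \eqref{Eq903-18}, which requires $i<n-1$: the cases $i=n-1$ and $i=n-2$ need \eqref{Eq901-17-1}, \eqref{Eq901-17-2} and \eqref{Eq903-8.5}, and the paper then obtains general $i$ by its own induction using \eqref{Eq903-18}.
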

\begin{proof}
First, we show the relation \eqref{Eq903-9.5}. By the definition of $X_{i,j,r}$, we can rewrite $X_{i,j,r}=\prod_{u=i}^{j-2}\ad(X^+_{u,0})X^+_{j-1,r}$. Thus, \eqref{Eq903-9.5} follows from \eqref{Eq901-17} and the assumption that $j\leq n-2$.

Next, we show the relation \eqref{Eq903-12}. We obtain
\begin{align*}
&\quad[X_{i,n-1,0},[X^+_{n-2,r},X^+_{n,s}]]=[[X_{i,n-3,0},X_{n-3,,n-1,0}],[X^+_{n-2,r},X^+_{n,s}]]\\
&=[X_{i,n-3,0},[[X^+_{n-3,0},X^+_{n-2,0}],[X^+_{n-2,r},X^+_{n,s}]]=0,
\end{align*}
where the first equality is due to \eqref{gat1}, the second equality is due to \eqref{gat1} and \eqref{Eq903-9.5}, and the third equality is due to \eqref{Eq903-13} and \eqref{Eq901-10}.

Next, we show the relation \eqref{Eq903-9}. The following relation holds:
    \begin{align}
        &\phantom{{}={}} [X_{i,n-1,0},[X_{i,n,0}, X^+_{n,2r}]]
        = [X_{i,n-1,0}, [[X_{i,n-1,0}, X^+_{n-1,0}],X^+_{n,2r}]]\nonumber \\
        &{}= [X_{i,n-1,0}, [[X_{i,n-1,0},X^+_{n,2r}], X^+_{n-1,0}]]\nonumber \\
        &{}= [[X_{i,n-1,0}, [X_{i,n-1,0}, X^+_{n,2r}]], X^+_{n-1,0}]
            + [[X_{i,n-1,0}, X^+_{n,2r}], [X_{i,n-1,0}, X^+_{n-1,0}]],\label{first1}
    \end{align}
where the first equality is due to \eqref{gat1}, the second equality is due to \eqref{Eq901-17-1} and the third equality is due to the Jacobi identity.
Here after, we denote the $i$-th term of the right hand side of the equality $(\cdot)$ by $(\cdot)_i$. We obtain
\begin{align*}
\eqref{first1}_1&=
[[X^+_{n-2,0}, [X^+_{n-2,0}, X^+_{n,2r}]], X^+_{n-1,0}]=0.
\end{align*}
by \eqref{Eq902} if $i=n-2$ and
\begin{align*}
\eqref{first1}_1
&=[[X_{i,n-1,0}, [[X_{i,n-2,0},X^+_{n-2,0}], X^+_{n,2r}]], X^+_{n-1,0}]\\
&=[[X_{i,n-2,0},[X_{i,n-1,0}, [X^+_{n-2,0}, X^+_{n,2r}]]], X^+_{n-1,0}]=0
\end{align*}
if $i<n-2$, where the first equality is due to \eqref{gat1}, the second equality is due to the assumption of $i$ and \eqref{Eq903-9.5}, and the last equality is due to \eqref{Eq903-12}.
By \eqref{gat1}, we have
\begin{align*}
\eqref{first1}_2&=[[X_{i,n-1,0}, X^+_{n,2r}], X_{i,n,0}]=-[X_{i,n-1,0},[X_{i,n,0}, X^+_{n,2r}]].
\end{align*}
Hence, we obtain the relation $[X_{i,n-1,0},[X_{i,n,0}, X^+_{n,2r}]] = -[X_{i,n-1,0},[X_{i,n,0}, X^+_{n,2r}]]$, which means that $[X_{i,n-1,0},[X_{i,n,0}, X^+_{n,2r}]]=0$.

Next, we show the relation \eqref{Eq903-10}. The cases that $j=n-2,n-1$ directly follows from \eqref{Eq903-19} and \eqref{Eq901-10}. Thus, it is enugh to show the case that $j \leq n-3$. 
The following relation holds for $j\leq n-3$:
    \begin{align*}
        &\phantom{{}={}} [X_{j,n,s}, X^+_{n,r}]
        = [[X_{j,n-2,0}, [X^+_{n-2,0}, X^+_{n-1,s}]], X^+_{n,r}] \\
        &{}= (-1)^s[X_{j,n-2,0}, [[X^+_{n-2,0}, X^+_{n-1,0}], X^+_{n,r+s}]]
        = (-1)^s[X_{j,n,0}, X^+_{n,r}],
    \end{align*}
where the first equality is due to \eqref{gat1} and the second equality is due to \eqref{Eq903-9.5} and \eqref{Eq903-19}.
We can prove \eqref{Eq903-11} by the similar way to \eqref{Eq903-10}.

Next, we show \eqref{Eq903-14-0}. In the case that $i=j=n-2$, the relation follows from \eqref{Eq902}. In the case that $i<j=n-2$, we have
\begin{align*}
&\quad[X_{i,n-1,s},[X_{j,n-1,0},X_{n,r}]]=[X_{i,n-1,s},[X^+_{n-2,0},X_{n,r}]]\\
&=[[X_{i,n-3,0},X_{n-3,n-1,s}],[X^+_{n-2,0},X_{n,r}]]=[X_{i,n-3,0},[[X^+_{n-3,0},X^+_{n-2,0}],[X^+_{n-2,0},X^+_{n,r}]]]=0,
\end{align*}
where the second equality is due to \eqref{gat1}, the third equality is due to \eqref{gat1} and \eqref{Eq903-9.5} and the last equality is due to \eqref{Eq903-13}.

Suppose $i\leq j<n-2$. Then, we obtain
\begin{align*}
&\quad[X_{i,n-1,s},[X_{j,n-1,0},X_{n,r}]]=[X_{i,n-1,s},[[X_{j,n-2,0},X^+_{n-1,0}],X_{n,r}]]\\
&=[X_{j,n-2,0},[X_{i,n-1,s},[X^+_{n-1,0},X_{n,r}]]]=[X_{j,n-2,0},[[X_{i,n-2,0},X^+_{n-1,s}],[X^+_{n-1,0},X_{n,r}]]]\\
&=[X_{j,n-2,0},[X_{i,n-2,0},[X^+_{n-1,s},[X^+_{n-1,0},X_{n,r}]]]]=0,
\end{align*}
where the first and third equalities are due to \eqref{gat1}, the second and 4th equalities are due to \eqref{gat1} and \eqref{Eq903-9.5}, and the last equality is due to \eqref{Eq901-17-1} and \eqref{Eq901-17-2}.

Next, we show \eqref{Eq903-14}. By \eqref{gat1} and the Jacobi identity, we obtain
    \begin{align}
        &\phantom{{}={}} [X_{i,n,0}, [X_{j-1,n-1,0}, X^+_{n,r}]] 
        = [[X_{i,n-1,0}, X^+_{n-1,0}], [X_{j-1,n-1,0}, X^+_{n,r}]]\nonumber\\
        &{}= [[X_{i,n-1,0}, [X_{j-1,n-1,0}, X^+_{n,r}]], X^+_{n-1,0}]-[X_{i,n-1,0}, [[X_{j-1,n-1,0}, X^+_{n,r}], X^+_{n-1,0}]].\label{first2}
    \end{align}
By \eqref{Eq903-14-0}, we find that $\eqref{first2}_1$ is $0$. We also obtain
\begin{align*}
\eqref{first2}_2&=-[X_{i,n-1,0}, [[[X_{j-1,n-2,0},X^+_{n-2,0}], X^+_{n,r}], X^+_{n-1,0}]]\\
&=-[X_{i,n-1,0},[X_{j-1,n-2,0},[[X^+_{n-2,0},X^+_{n,r}],X^+_{n-1,0}]]]\\
&=-(-1)^r[X_{i,n-1,0},[X_{j-1,n-2,0},[[X^+_{n-2,0},X^+_{n-1,0}],X^+_{n,r}]]]\\
&=-(-1)^r[X_{i,n-1,0},[X_{j-1,n,0},X^+_{n,r}]],
\end{align*}
where the first equality is due to \eqref{gat1}, the second equality and last equalities are due to \eqref{Eq903-9.5} and \eqref{gat1}, and the third equality is due to \eqref{Eq903-1}.

Next, we show the relation \eqref{Eq903-18}. In the case that $i=j=n-2$, \eqref{Eq903-18} follows from \eqref{Eq903-8.5} and \eqref{gat1}. We will show other cases.

By \eqref{gat1} and \eqref{Eq903-10}, we have
\begin{align*}
[X_{i,n,s},[X_{j,n,0},X^+_{n,r}]]&=[X_{j,n,0},[X_{i,n,s},X^+_{n,r}]]=(-1)^s[X_{j,n,0},[X_{i,n,0},X^+_{n,r+s}]].
\end{align*}
Thus, it is enough to show the case that $s=0$ and $i\leq j$. 
By \eqref{Eq901-17} and \eqref{gat1}, we have
\begin{align*}
[X_{i,n,0},[X_{j,n,0},X^+_{n,r}]]=\prod_{u=i}^{j-2}\ad(X^+_{u,0})[X_{j-1,n,0},[X_{j,n,0},X^+_{n,r}]]
\end{align*}
if $i+2\leq j$ and
\begin{align*}
[X_{i,n,0},[X_{i,n,0},X^+_{n,r}]]=\ad(X^+_{i,0})[X_{i,n,0},[X_{i+1,n,0},X^+_{n,r}]]
\end{align*}
if $i=j<n-2$. Thus, it is enough to show \eqref{Eq903-17} by induction on $j$ only for the case $i = j - 1$. The case $j = n-2$ immediately follows from \eqref{Eq903-8}. 
    For $j = n-3$, we obtain
    \begin{align*}
        &\phantom{{}={}} [X_{n-4,n,s}, [X_{n-3,n,0}, X^+_{n,r}]] \\
        &{}= [[X_{n-4,n-2,s}, X_{n-2,n,0}], [[X_{n-3,n-1,0}, X^+_{n-1,0}], X^+_{n,r}]] \\
        &{}= [X_{n-4,n-2,s}, [[X_{n-3,n-1,0}, X^+_{n-1,0}], [X_{n-2,n,0}, X^+_{n,r}]]] = 0,     
    \end{align*}
    where the first and second equalities are due to \eqref{gat1}, the third equality is due to \eqref{gat1} and \eqref{Eq903-9.5}, and the last equality is due to \eqref{Eq903-8}.
    
    If $ [X_{j+1,n,0}, [X_{j+2,n,0}, X^+_{n,r}]] = 0 $ holds for $j+2 < n-1$, then we have 
    \begin{align*}
        &\phantom{{}={}} [X_{j-1,n,s}, [X_{j,n,0}, X^+_{n,r}]] \\
        &{}= [[X_{j-1,j+1,s}, X_{j+1,n,0}], [X_{j,j+2,0},[X_{j+2,n,0}, X^+_{n,r}]]] \\
        &{}= [X_{j-1,j+1,s}, [X_{j+1,n,0}, [X_{j,j+2,0}, [X_{j+2,n,0}, X^+_{n,r}]]]] \\
        &{}= [X_{j-1,j+1,s}, [X_{j,j+2,0}, [X_{j+1,n,0}, [X_{j+2,n,0}, X^+_{n,r}]]]] = 0,
    \end{align*}
where the first equality is due to \eqref{gat1}, the second equality is due to \eqref{gat1} and \eqref{Eq903-9.5}, and the last equality is due to the induction hypothesis.

Next, we show the relation \eqref{Eq903-20}. The following relations hold:
\begin{align*}
&\quad[X_{i,n,s},[X_{j,n-1,0},X^+_{n,r}]]=[X_{j,n-1,0},[X_{i,n,s},X^+_{n,r}]]\\
&=(-1)^s[X_{j,n-1,0},[X_{i,n,0},X^+_{n,r+s}]]=(-1)^s[X_{i,n,0},[X_{j,n-1,0},X^+_{n,r+s}]],
\end{align*}
where the first equality is due to \eqref{gat1}, the second equality is due to \eqref{Eq903-10}, and the last equality is due to \eqref{gat1}. The relation \eqref{Eq903-21} can be proven in a similar way.

Next, we show the relation \eqref{Eq903-22} and \eqref{Eq903-23}. 
The following relation holds:
\begin{align*}
&\quad[X_{i,n-1,0},[X^+_{n-2,s},X^+_{n,r}]]=[[X_{i,n-3,0},X_{n-3,n-1,0}],[X^+_{n-2,s},X^+_{n,r}]]\\
&=[X_{i,n-3,0},[X_{n-3,n-1,0},[X^+_{n-2,s},X^+_{n,r}]]]=0,
\end{align*}
where the first equality is due to \eqref{gat1}, the second equality is due to \eqref{gat1} and \eqref{Eq903-9.5}, and the last equality is due to \eqref{Eq903-13}.
By \eqref{Eq902}, we also obtain 
\begin{align*}
&\quad0=[[X_{i,n-1,0},[X^+_{n-2,s},X^+_{n,r}]],X^+_{n-1,0}]=[X_{i,n,0},[X^+_{n-2,s},X^+_{n,r}]].
\end{align*}

Finally, we show \eqref{Eq903-18-2} by induction on $i$. 
The case $i = n-1,n-2$ follows from \eqref{Eq901-17-1},\eqref{Eq901-17-2} and \eqref{Eq903-8.5}. 

    Suppose $[X_{i+1,n,0}, [X_{i+1,n,0}, X^+_{n,r}]] = 0$ holds for $i + 1 \leq n-2$.
    From \eqref{gat1}, we have 
    $X_{i,n,0} = [X_{i,i+1,0}, X_{i+1,n,0}]$ and 
    $[X_{i,n,0}, X_{i,i+1,0}] = 0$. 
    By the Jacobi identity, we have
    \begin{align*}
        &\phantom{{}={}} [X_{i,n,0}, [X_{i,n,0}, X^+_{n,r}]]
        = [X_{i,n,0}, [[X_{i,i+1,0}, X_{i+1,n,0}], X^+_{n,r}]] \\
        &{}= [X_{i,n,0}, [X_{i,i+1,0}, [X_{i+1,n,0}, X^+_{n,r}]]]
        = [X_{i,i+1,0}, [X_{i,n,0}, [X_{i+1,n,0}, X^+_{n,r}]]]
        = 0
    \end{align*}
    using \eqref{Eq903-9.5} and \eqref{Eq903-18} in the second and last equality respectively. 
\end{proof}
We also need to prepare a new presentation of $X_{i,-j,r}$ and $X_{i,-i,2r+1}$.
By \eqref{gat1} and the definition of $X_{i,-j,r}$, we have
\begin{align*}
    X_{i,-j,r} =
    \begin{cases}
        [X_{i,n,0},[X_{j,n-1,0},X^+_{n,r}]] & \text{ if } j\leq n-3 \text{ and } j-n \text{ is odd}, \\
        [X_{i,n-1,0},[X_{j,n,0},X^+_{n,r}]] & \text{ if } j\leq n-2 \text{ and } j-n \text{ is even}, \\
        [X_{i,n,0},X^+_{n,r}] & \text{ if } j=n-1, \\
        [X_{i,n-1,0},X^+_{n,r}] & \text{ if } j=n \text{ and } i < n-1
    \end{cases}
\end{align*}
and
\begin{align*}
    X_{i,-i,2r+1} = 
    \begin{cases}
        [X_{i,n,0}, [X_{i,n-1,0}, X^+_{n,2r+1}]] & \text{ if } i \leq n-2, \\
        [X^+_{n-1,0}, X^+_{n,2r+1}] & \text{ if } i = n-1.
    \end{cases}
\end{align*}
Now, we can complete the proof of Lemma~\ref{gat}
\subsection{Proof for $[X^+_{n,s},X_{i,j,r}]$}
From \eqref{Eq901-10-0}, \eqref{Eq901-10} and \eqref{Eq903-1}, we have
\begin{align*}
    &\phantom{{}={}} \left[ X^+_{n,s}, X_{i,j,r} \right]\\
    &{}= \delta_{j,n-1} \left(\prod_{u=i}^{n-3} \ad(X^+_{u,0}) \right)
    \left[X^+_{n,s}, X^+_{n-2,r}\right]
    + \delta_{j,n} \left( \prod_{u=i}^{n-3} \ad(X^+_{u,0}) \right)
    \left[X^+_{n,s} \left[X^+_{n-2,0}, X^+_{n-1,r}\right]\right] \\
    &{}= \delta_{j,n-1} \left(\prod_{u=i}^{n-3} \ad(X^+_{u,0}) \right)
    \left[X^+_{n,s}, X^+_{n-2,r}\right]\\
  &{}-(-1)^r \delta_{j,n} \left( \prod_{u=i}^{n-3} \ad(X^+_{u,0}) \right)
    \left[\left[X^+_{n-2,0}, X^+_{n-1,0}\right],X^+_{n-2,r+s}]\right] \\
    &{}= -\delta_{j,n-1} X_{i,-n,r+s}
    -(-1)^r \delta_{j,n} X_{i,-n+1,r+s},
\end{align*}
where the first equality is due to \eqref{Eq901-17} and the second equality is due to \eqref{Eq903-19}. Thus, $[X^+_{n,s},X_{i,j,r}]$ can be spanned by the given set.

\subsection{Proof for $[X^+_{p,s},X_{i,-j,r}]$}
We show the statement for each of the cases $p \leq n-3$, $p = n-2$ and $p = n-1$.

First, we show the case that $p\leq n-3$. Suppose that $p \leq n-3$. 
For the case $j \leq n-3$ and $j - n$ is odd, we have 
\begin{align*}
    &\phantom{{}={}} [X^+_{p,s}, X_{i,-j,r}]
    = [X_{p,p+1,s}, [X_{i,n,0}, [X_{j,n-1,0}, X^+_{n,r}]]] \nonumber\\
    &{}= \delta_{p+1, i} [X_{i-1,n,s}, [X_{j,n-1,0}, X^+_{n,r}]]
        + \delta_{p+1, j} [X_{i,n,0}, [X_{j-1,n-1,s}, X^+_{n,r}]]\\
    &{}= \delta_{p+1, i} [X_{i-1,n,s}, [X_{j,n-1,0}, X^+_{n,r}]]
        + \delta_{p+1, j} [X_{i,n,0}, [X_{j-1,n-1,0}, X^+_{n,r+s}]]\\
    &{}=\delta_{p+1, i}(-1)^s [X_{i-1,n,0}, [X_{j,n-1,0}, X^+_{n,r+s}]]-(-1)^{r+s}\delta_{p+1, j} [X_{i,n-1,0}, [X_{j-1,n,0}, X^+_{n,r+s}]],
\end{align*}
where the second equality is due to \eqref{gat1} and \eqref{Eq903-9.5}, the third equality is due to \eqref{Eq903-11}, and the last equality is due to \eqref{Eq901-10-0}, \eqref{Eq903-14} and \eqref{Eq903-20}.
Thus, we have proved this case.

For the case $j \leq n-2$ and $j - n$ is even, we have
\begin{align*}
    &\phantom{{}={}} [X^+_{p,s}, X_{i,-j,r}]
    = [X_{p,p+1,s}, [X_{i,n-1,0}, [X_{j,n,0}, X^+_{n,r}]]] \\
    &{}= \delta_{p+1,i} [X_{i-1,n-1,s}, [X_{j,n,0}, X^+_{n,r}]]
        + \delta_{p+1,j} [X_{i,n-1,0}, [X_{p,n,s}, X^+_{n,r}]] \\
    &{}= \delta_{p+1,i} [X_{i-1,n-1,0}, [X_{j,n,0}, X^+_{n,r+s}]]+ (-1)^s\delta_{p+1,j} [X_{i,n-1,0},[X_{j-1,n,0}, X^+_{n,r+s}]] \\
    &{}= \delta_{p+1,i} [X_{i-1,n-1,0}, [X_{j,n,0}, X^+_{n,r+s}]]+ (-1)^s\delta_{p+1,j} [X_{i,n,0},[X_{j-1,n-1,0}, X^+_{n,r+s}]] \\
    &{}= \delta_{p+1,i} X_{i-1,-j,r+s}+ (-1)^s\delta_{p+1,j} X_{i,-j+1,r+s},
\end{align*}
where the second equality is due to \eqref{gat1} and \eqref{Eq903-9.5}, the third equality is due to \eqref{Eq903-21} and \eqref{Eq903-10}, and the 4-th equality is due to \eqref{Eq903-1}.

For the case $j = n-1$, we obtain
\begin{align*}
    &\phantom{{}={}} [X^+_{p,s}, X_{i,-j,r}]
    = [X_{p,p+1,s}, [X_{i,n,0}, X^+_{n,r}]] \\
    &{}= \delta_{p+1,i} [X_{i-1,n,s}, X^+_{n,r}]
    = (-1)^s\delta_{p+1,i} X_{i-1, -n+1, r+s}
\end{align*}
by \eqref{gat1}, \eqref{Eq903-9.5} and \eqref{Eq903-10}. 

For the case $j=n$, we have
\begin{align*}
    &\phantom{{}={}} [X^+_{p,s}, X_{i,-j,r}]
    = [X_{p,p+1,s}, [X_{i,n-1,0}, X^+_{n,r}]] \\
    &{}= \delta_{p+1,i} [X_{p,n-1,s}, X^+_{n,r}] = \delta_{p+1,i} X_{i-1,-n,r+s}
\end{align*}
from \eqref{gat1}, \eqref{Eq903-9.5} and \eqref{Eq903-11}.
Thus, the statement is true for every $p \leq n-3$.

Next, we show the case that $p=n-2$. Suppose that $p=n-2$. 
For the case $j \leq n-3$ and $j - n$ is odd, we have 
\begin{align*}
    &\phantom{{}={}} [X^+_{n-2,s}, X_{i,-j,r}]
    = [X^+_{n-2,s}, [X_{i,n,0}, [X_{j,n-1,0}, X^+_{n,r}]]] \\
    &{}= [X_{i,n,0}, [X_{j,n-1,0}, [X^+_{n-2,s}, X^+_{n,r}]]] = 0,
\end{align*}
where the second equality is due to \eqref{gat1} and the assumption that $j\leq n-3$ and the third equality is due to \eqref{Eq903-12}.

For the case $j \leq n-2$ and $j - n$ is even, 
\begin{align*}
    &\phantom{{}={}} [X^+_{n-2,s}, X_{i,-j,r}]
    = [X^+_{n-2,s}, [X_{i,n-1,0}, [X_{j,n,0}, X^+_{n,r}]]] \\
    &{}=  [X_{j,n,0},[X_{i,n-1,0}, [X^+_{n-2,0}, X^+_{n,r}]]] = 0,
\end{align*}
where the second equality is due to \eqref{gat1} and the assumption that $j\leq n-3$, and the last equality is due to \eqref{Eq903-22} and \eqref{gat1}.

For the case $j = n-1$, we obtain 
\begin{align*}
    &\phantom{{}={}} [X^+_{n-2,s}, X_{i,-(n-1),r}]
    = [X_{i,n,0}, [X^+_{n-2,s}, X^+_{n,r}]] \\
    &{}= \delta_{i,n-2} [[X^+_{n-2,0}, X^+_{n-1,0}], [X^+_{n-2,0}, X^+_{n,r+s}]]+\delta_{n-1,i}[X^+_{n-1,0}, [X^+_{n-2,s}, X^+_{n,r}]]\\
    &{}=0-(-1)^{r+s}[[X^+_{n-2,0},X^+_{n-1,0}],X^+_{n,r+s}]
    =-(-1)^{r+s} \delta_{n-1,i} X_{n-2,-n+2,r+s},
\end{align*}
where the second equality is due to \eqref{Eq903-22}, and the third equality is due to\eqref{Eq903-7} and \eqref{Eq903-19}.
For the case $i<n-1$ and $j=n$, by \eqref{Eq903-23} and \eqref{Eq902}, we find that
\begin{align*}
    &\phantom{{}={}} [X^+_{n-2,s}, X_{i,-n,r}]
    = [X_{n-2,n-1,s}, [X_{i,n-1,0}, X^+_{n,r}]]= 0.
\end{align*}
Since the case $(i,j)=(n-1,n)$ is trivial, the statement is true for $p=n-2$.

Finally, we show the case that $p=n-1$. Suppose that $p=n-1$. 
For the case $j \leq n-3$ and $j - n$ is odd, by \eqref{gat1} and \eqref{Eq903-18}, we obtain
\begin{align*}
    &\phantom{{}={}} [X^+_{n-1,s}, X_{i,-j,r}]
    = [X^+_{n-1,s}, [X_{i,n,0}, [X_{j,n-1,0}, X^+_{n,r}]]]\\
    &{}= [X_{i,n,0}, [X^+_{n-1,s}, [X_{j,n-1,0}, X^+_{n, r}]]]\\
    &{}=[X_{i,n,0}, [X_{j,n-1,0}, [X^+_{n-1,s}, X^+_{n, r}]]]-[X_{i,n,0}, [X_{j,n,0}, X^+_{n, r}]]\\
    &{}=[X_{j,n-1,0}, [X_{i,n,0}, [X^+_{n-1,s}, X^+_{n, r}]]]-0\\
    &{}=[X_{j,n-1,0}, [[X_{i,n-3},X_{n-2,n}], [X^+_{n-1,s}, X^+_{n, r}]]]\\
    &{}=[X_{j,n-1,0},[X_{i,n-3}, [X_{n-2,n}, [X^+_{n-1,s}, X^+_{n, r}]]]]=0,
\end{align*}
where the second and 5-th equalities are due to \eqref{gat1}, the third equality is due to the Jacobi identity, the 4-th equality is due to \eqref{Eq903-18} and \eqref{gat1}, the 6-th equality is due to \eqref{gat1} and \eqref{Eq903-9.5}, and the last equality is due to \eqref{Eq903-7}. 

For the case $j \leq n-2$ and $j - n$ is even, 
by \eqref{Eq903-7} and \eqref{Eq903-18}, 
\begin{align*}
    &\phantom{{}={}} [X^+_{n-1,s}, X_{i,-j,r}]
    = [X^+_{n-1,s}, [X_{i,n-1,0}, [X_{j,n,0}, X^+_{n,r}]]] \\
    &{}= -[X_{i,n,s}, [X_{j,n,0}, X^+_{n,r}]] + [X_{i,n-1,0}, [X_{j,n,0}, [X^+_{n-1,s}, X^+_{n,r}]]] \\
    &{}= 0 + [X_{i,n-1,0}, [X_{j,n-2,0}, [[X^+_{n-2,0}, X^+_{n-1,0}], [X^+_{n-1,s}, X^+_{n,r}]]]]
    = 0,
\end{align*}
where the second equality is due to the Jacobi identity, the third equality is due to \eqref{Eq903-18}, and the last equality is due to \eqref{Eq903-7}.

For the case $j = n-1$, we obtain
\begin{align*}
    & [X^+_{n-1,s}, X_{i,-n+1,r}] 
    = [X^+_{n-1,s}, [X_{i,n,0}, X^+_{n,r}]]
    = [X_{i,n,0}, [X^+_{n-1,s}, X^+_{n,r}]] \\
    &{}=(-1)^s[X_{i,n,0},[X^+_{n-1,0},X^+_{n,r+s}]]=(-1)^s[X^+_{n-1,0},[X_{i,n,0},X^+_{n,r+s}]]=0,
\end{align*}
where the second, third equality is due to \eqref{gat1}, the third equality is due to \eqref{Eq901-10}, and the last equality is due to \eqref{Eq903-18}.

For the case $i\leq n-2$ and $j=n$, we have
\begin{align*}
    & [X^+_{n-1,s}, X_{i,-n,r}]
    = [X^+_{n-1,s}, [X_{i,n-2,0}, [X^+_{n-2,0}, X^+_{n,r}]]] \\
    &{}= [X_{i,n-2,0}, [X^+_{n-1,s}, [X^+_{n-2,0}, X^+_{n,r}]]] \\
    &{}=-(-1)^r[X_{i,n-2,0}, [[X^+_{n-2,0},X^+_{n-1,0}],X^+_{n,s+r}]]\\
    &{}=-(-1)^r [X_{i,n,0}, X^+_{n,r+s}] = -(-1)^r X_{i,-n+1,r+s}, 
\end{align*}
where the second equality is due to \eqref{gat1}, the third equality is due to \eqref{Eq903-16}, and the 4-th equality is due to \eqref{gat1} and \eqref{Eq903-9.5}.

For the case that $(i,j)=(n-1,n)$, by the definition of $X_{n-1,-n+1,r}$, \eqref{Eq901-10} and \eqref{Eq901-17-1}, we have
\begin{align*}
    & [X^+_{n-1,s}, X_{n-1,-n,r}] 
    = [X^+_{n-1,s}, X^+_{n,r}] = (-1)^s \delta_{r+s,odd} X_{n-1,-n+1,r+s}.
\end{align*}
Thus, the statement is true for $p = n-1$. 

\subsection{Proof for $[X^+_{n,s}, X_{i,-j,r}]$}
We show the statement for each of the cases $j\leq n-2$, $j=n-1$ and $j=n$.

For the case $j \leq n-2$, by \eqref{Eq903-9.5} and the Jacobi identity, we have 
\begin{align*}
    &\phantom{{}={}} [X^+_{n,s}, X_{i,-j,r}]
    = \left( \prod_{u=i}^{j-2} \ad (X^+_{u,0}) \right)
        \left( \prod_{u=j+1}^n \ad (X_{u-2,u,0}) \right) (X^+_{n,r}) \\
    &{}= \left( \prod_{u=i}^{j-2} \ad (X^+_{u,0}) \right)
        \left( \prod_{u=j+1}^{n-2} \ad (X_{u-2,u,0}) \right) \\
        &\phantom{{}={}} \quad\quad\quad [X^+_{n,s}, [[X^+_{n-3,0}, X^+_{n-2,0}], [[X^+_{n-2,0}, X^+_{n-1,0}], X^+_{n,r}]]]\\
    &{}=[[X^+_{n,s}, [X^+_{n-2,0}, X^+_{n-3,0}]], [[X^+_{n-1,0}, X^+_{n-2,0}], X^+_{n,r}]]\\
     &\phantom{{}={}} +[[X^+_{n-2,0}, X^+_{n-3,0}], [X^+_{n,s},[[X^+_{n-1,0}, X^+_{n-2,0}], X^+_{n,r}]]].
\end{align*}
The first term of the right hand side is equal to zero by \eqref{Eq903-4}. We also find that the second term of the right hand side is equal to zero since we obtain
\begin{align}
&\quad[X^+_{n,s},[[X^+_{n-1,0}, X^+_{n-2,0}], X^+_{n,r}]]\nonumber\\
&=[[X^+_{n,s},X^+_{n-1,0}],[X^+_{n-2,0},X^+_{n,r}]]+[[X^+_{n-1,0},X^+_{n,r}],[X^+_{n,s},X^+_{n-2,0}]]=0+0,\label{first5}
\end{align}
where the first equality is due to the Jacobi identity, \eqref{Eq901-17-1}, \eqref{Eq901-17-2} and \eqref{Eq902}, and the second equality is due to \eqref{Eq903-3}, \eqref{Eq901-10-0} and \eqref{Eq901-10}.

For the case $j = n-1$, the following relation holds:
\begin{align*}
    &\phantom{{}={}} [X^+_{n,s}, X_{i,-j,r}]
    = \left[ X^+_{n,s}, \left( \prod_{u=i}^{n-3} \ad (X^+_{u,0}) \right) 
        [[X^+_{n-1,0}, X^+_{n-2,0}], X^+_{n,r}] \right] \\
    &{}= \left( \prod_{u=i}^{n-3} \ad (X^+_{u,0}) \right) 
        [X^+_{n,s}, [[X^+_{n-1,0}, X^+_{n-2,0}], X^+_{n,r}]] = 0,
\end{align*}
where the second equality is due to \eqref{gat1} and \eqref{Eq903-9.5}, and the last equality is due to \eqref{first5}.

Suppose that $j=n$. In the case that $i=n-1$, this directly follows from \eqref{Eq901-10-2}.
For $i\leq n-2$, by \eqref{Eq903-9.5} and \eqref{Eq902}, we have 
\begin{align*}
    &\phantom{{}={}} [X^+_{n,s}, X_{i,-j,r}]
    = \left[ X^+_{n,s}, \left( \prod_{u=i}^{n-3} \ad (X^+_{u,0}) \right) 
        ([X^+_{n-2,0}, X^+_{n,r}]) \right] \\
    &{}= \left( \prod_{u=i}^{n-3} \ad (X^+_{u,0}) \right) [X^+_{n,s}, [X^+_{n-2,0}, X^+_{n,r}]] = 0.
\end{align*}

\subsection{Proof for $[X^+_{p,s}, X_{i,-i,2r+1}]$}
We show the statement for each of the cases $i\leq n-2$ and $i=n-1$.

First, we show the case $i\leq n-2$. Suppose $i \leq n-2$. 
From the Jacobi identity, we have 
\begin{align}
    &\phantom{{}={}} [X^+_{p,s}, X_{i,-i,2r+1}]
    = [X_{p,p+1,s}, [X_{i,n,0}, [X_{i,n-1,0}, X^+_{n,2r+1}]]]\nonumber \\
    &{}= [[X_{p,p+1,s}, X_{i,n,0}], [X_{i,n-1,0}, X^+_{n,2r+1}]]
    + [X_{i,n,0}, [[X_{p,p+1,s}, X_{i,n-1,0}], X^+_{n,2r+1}]] \nonumber\\
    &{} \quad \quad \quad  + [X_{i,n,0}, [X_{i,n-1,0}, [X^+_{p,s}, X^+_{n,2r+1}]]]. \label{first6}
\end{align}
By \eqref{gat1} and \eqref{Eq901-17}, we have
\begin{align*}
\eqref{first6}_1&=\delta_{p+1,i} [X_{p,n,s}, [X_{i,n-1,0}, X^+_{n,2r+1}]],\\
\eqref{first6}_2&=\delta_{p+1,i} [X_{i,n,0}, [X_{p,n-1,s}, [X^+_{n,2r+1}]]]
            - \delta_{p,n-1} [X_{i,n,0}, [X_{i,p+1,s}, X^+_{n,2r+1}]],\\
\eqref{first6}_3&=(\delta_{p,n-1} + \delta_{p,n-2}) [X_{i,n,0}, [X_{i,n-1,0}, [X^+_{p,s}, X^+_{n,2r+1}]]].
\end{align*}
Thus, it is enough to compute the following three terms:
\begin{gather}
[X_{i-1,n,s}, [X_{i,n-1,0}, X^+_{n,2r+1}]]
        + [X_{i,n-1,0}, [X_{i-1,n,s}, X^+_{n,2r+1}]],\label{first7}\\
[X_{i,n,0}, [X_{i,n-1,0}, [X^+_{n-2,s}, X^+_{n,2r+1}]]],\label{first8}\\
[X_{i,n,0}, [X_{i,n-1,0}, [X^+_{n-1,s}, X^+_{n,2r+1}]]]
        - [X_{i,n,0}, [X_{i,n,s}, X^+_{n,2r+1}]].\label{first9}
\end{gather}
By \eqref{Eq903-12}, we find that \eqref{first8} is equal to zero.

From the assumption that $i\leq n-2$, the Jacobi identity and \eqref{gat1}, we find that
\begin{align*}
    \eqref{first7}
    &{}= 2[X_{i,n-1,0}, [X_{i,n,s}, X^+_{n,2r+1}]]
    = (-1)^s 2 [X_{i,n-1,0}, [X_{i-1,n,0}, X^+_{n,2r+1+s}]]\\
    &{}=-(-1)^s (-1)^{2r+1+s} 2[X_{i,n,0}, [X_{i-1,n-1,0}, X^+_{n,2r+1+s}]]] \\
    &{}= 2[X_{i,n,0}, [X_{i-1,n-1,0}, X^+_{n,2r+1+s}]]].
\end{align*}
Here we used \eqref{Eq903-10} in the second equality and \eqref{Eq903-14} in the third equality.
Hence, in either case $n - i$ is even or odd, we have 
\begin{align*}
    \eqref{first7}
    &{}= 
    2 ((-1)^s \delta_{n-i,odd} + \delta_{n-i,even}) X_{i-1,-i,2r+1+s}. 
\end{align*}

Since $[X_{i,n-1,0}, X^+_{n-1,s}] = X_{i,n,s}$ from \eqref{gat1}, we have
\begin{align*}
 \eqref{first9}
    &{}= (-1)^s([X_{i,n,0}, [X_{i,n-1,0}, [X^+_{n-1,0}, X^+_{n,2r+1+s}]]]
        - [X_{i,n,0}, [X_{i,n,0}, X^+_{n,2r+1+s}]]) \\
    &{}= (-1)^s([X_{i,n,0}, [[X_{i,n-1,0}, X^+_{n-1,0}], X^+_{n,2r+1+s}]]  \\
    & \quad \quad+ [X_{i,n,0}, [X^+_{n-1,0}, [X_{i,n-1,0}, X^+_{n,2r+1+s}]]] - [X_{i,n,0}, [X_{i,n,0}, X^+_{n,2r+s+1}]] ) \\
    &{}= (-1)^s [X_{i,n,0}, [X^+_{n-1,0}, [X_{i,n-1,0}, X^+_{n,2r+1+s}]]],
\end{align*}
where the first equality is due to \eqref{Eq903-10}.
We have the relations $X_{i,n-1,0} = [X_{i,n-2,0}, X^+_{n-2,0}]$,
$[X^+_{n-1,0}, X_{i,n-2,0}] = 0$ and
$[X_{i,n-2,0}, X^+_{n,2r+1+s}] = 0$ from \eqref{gat1} and \eqref{Eq903-9.5}, hence we obtain
\begin{align*}
    \eqref{first9}
    &{}= (-1)^s [X_{i,n,0}, [X^+_{n-1,0}, [X_{i,n-2,0}, [X^+_{n-2,0}, X^+_{n,2r+1+s}]]]] \\
    &{}= (-1)^s [X_{i,n,0}, [X_{i,n-2,0}, [X^+_{n-1,0}, [X^+_{n-2,0}, X^+_{n,2r+1+s}]]]] \\
    &{}= (-1)^s (-1)^{2r+1+s} [X_{i,n,0}, [X_{i,n-2,0}, [X^+_{n,2r+1+s}, [X^+_{n-2,0}, X^+_{n-1,0}]]]] \\
    &{}= [X_{i,n,0}, [X_{i,n-2,0}, [[X^+_{n-2,0}, X^+_{n-1,0}], X^+_{n,2r+1+s}]]] \\
    &{}= [X_{i,n,0}, [[X_{i,n-2,0}, [X^+_{n-2,0}, X^+_{n-1,0}]], X^+_{n,2r+1+s}]] \\
    &{}= [X_{i,n,0}, [X_{i,n,0}, X^+_{n,2r+1+s}]] = 0.
\end{align*}
Here we used \eqref{Eq903-1} in the third equality, and \eqref{Eq903-18-2} in the last equality. 

Next, we show the case $i=n-1$. For the case $i = n-1$, we obtain
\begin{align*}
    &\phantom{{}={}} [X^+_{p,s}, X_{n-1,-n+1,2r+1}] \\
    &{}= [[X_{p,p+1,s}, X_{n-1,n,0}], X^+_{n,2r+1}]
        + [X^+_{n-1,0}, [X^+_{p,s}, X^+_{n,2r+1}]] \\
    &{}= \delta_{p+1,n-1} [X_{p,n,s}, X^+_{n,2r+1}]
        + \delta_{p,n-2} [X^+_{n-1,0}, [X^+_{p,s}, X^+_{n,2r+1}]] \\
    &{}= \delta_{p+1,n-1} ([X_{n-2,n,s}, X^+_{n,2r+1}]
        + [X^+_{n-1,0}, [X^+_{n-2,s}, X^+_{n,2r+1}]]), 
\end{align*}
where the second equality is due to \eqref{gat1}, \eqref{Eq901-17-1} and \eqref{Eq901-17-2}.
Since $X_{n-2,n,s} = [X^+_{n-2,0}, X^+_{n-1,s}]$ holds by \eqref{gat1}, we have 
\begin{align*}
    &\phantom{{}={}} \delta_{p+1,n-1} ([X_{n-2,n,s}, X^+_{n,2r+1}]
        + [X^+_{n-1,0}, [X^+_{n-2,s}, X^+_{n,2r+1}]]) \\
    &{}= \delta_{p+1,n-1} ([[X^+_{n-2,0}, X^+_{n-1,s}], X^+_{n,2r+1}]
        + [X^+_{n-1,0}, [X^+_{n-2,s}, X^+_{n,2r+1}]]) \\
    &{}= \delta_{p+1,n-1} ((-1)^s [[X^+_{n-2,0}, X^+_{n-1,0}], X^+_{n,2r+1+s}] 
        + (-1)^{2r+1+s} [X^+_{n,2r+1+s}, [X^+_{n-2,0}, X^+_{n-1,0}]]) \\
    &{}= 2 (-1)^s \delta_{p+1,n-1} [[X^+_{n-2,0}, X^+_{n-1,0}], X^+_{n,2r+1+s}] 
    = 2 (-1)^s \delta_{p+1,n-1} X_{n-2,-n+1,2r+1+s}
\end{align*}
using \eqref{Eq903-16} and \eqref{Eq903-19} in the second equality.

\subsection{Proof for $[X^+_{n,s}, X_{i,-i,2r+1}]$}
We show the statement for each of the cases $i\leq n-3$, $i=n-2$ and $i=n-1$.

For $i \leq n-3$, the following relation holds:
\begin{align*}
    &\phantom{{}={}} [X^+_{n,s}, X_{i,-i,2r+1}] \\
    &{}= \Bigg[ X^+_{r,s}, \ad (X^+_{i,0}) 
        \left( \prod_{u=i+2}^{n-2} \ad (X_{u-2,u,0}) \right)  \\
       &\phantom{{}={}} \quad\quad\quad \ad ([X^+_{n-3,0}, X^+_{n-2,0}]) \ad ([X^+_{n-2,0}, X^+_{n-1,0}]) (X^+_{n,2r+1}) \Bigg] \\
    &{}= \ad (X^+_{i,0}) 
        \left( \prod_{u=i+2}^{n-2} \ad (X^+_{u-2,u,0}) \right) \\
        &\phantom{{}={}} \quad\quad\quad [X^+_{n,s}, [[X^+_{n-2,0}, X^+_{n-3,0}], [[X^+_{n-1,0}, X^+_{n-2,0}], X^+_{n,2r+1}]]]\\
     &{}= \ad (X^+_{i,0}) 
        \left( \prod_{u=i+2}^{n-2} \ad ([X^+_{u-1,0}, X^+_{u-2,0}]) \right) \\
        &\phantom{{}={}} \quad\quad\quad [[X^+_{n,s}, [X^+_{n-2,0}, X^+_{n-3,0}]], [[X^+_{n-1,0}, X^+_{n-2,0}], X^+_{n,2r+1}]] 
        = 0,
\end{align*}
where the second equality is due to \eqref{Eq903-9.5} and \eqref{gat1},  the third equality is due to \eqref{first5}, and the last equality is due to \eqref{Eq903-4}.

For $i = n-2$, we obtain
\begin{align*}
&\quad[X^+_{n,s}, X_{n-2,-n+2,2r+1}]= [X^+_{n,s}, [X^+_{n-2,0}, [[X^+_{n-2,0}, X^+_{n-1,0}], X^+_{n,2r+1}]]]\\
&=[X^+_{n,s},  [[X^+_{n-2,0}, X^+_{n-1,0}], [X^+_{n-2,0},X^+_{n,2r+1}]]]\\
&=[[X^+_{n,s},[X^+_{n-2,0}, X^+_{n-1,0}]], [X^+_{n-2,0},X^+_{n,2r+1}]]=0.
\end{align*}
where the second and third equalities are due to \eqref{Eq902} and the last equality is due to \eqref{Eq903-5}.

For $i=n-1$, by \eqref{Eq902} and \eqref{Eq901-14}, we have
\begin{align*}
    [X^+_{n,s}, X_{n-1,-n+1,2r+1}] &{}= [X^+_{n,s}, [X^+_{n-1,0}, X^+_{n,2r+1}]]= 0.
\end{align*}

\section{Proof of Theorem~\ref{Main}}
It is enough to show the compatibility with the relations \eqref{q901}-\eqref{q903-7}. By \eqref{SS1}, the compatibilities with the relations \eqref{q901-1-1}, \eqref{q901-2--1}, \eqref{q901-17} and \eqref{q902}  are trivial. Thus, we need to show the compatibility with \eqref{q901}, \eqref{q901-1}, \eqref{q901-2-0}, \eqref{q901-2-1}, \eqref{q901-3}, \eqref{q901-10}, \eqref{q903-1}, \eqref{q903-3} and \eqref{q903-7}. We will give the proof of the compatibility with \eqref{q901}, \eqref{q901-1}, \eqref{q901-2-1}, \eqref{q901-3}, \eqref{q901-10}, \eqref{q903-1}, \eqref{q903-3} and \eqref{q903-7}. The other compatibilities can be proven by the similar way.
\begin{comment}
By the definition of $\Phi$, we have
\begin{align*}
[\Phi(H_{i,1}),\Phi(X^-_{n,0})]&=[\sigma\circ\Phi(H_{i,1}),\sigma\circ\Phi(X^+_{n,0})]=-\sigma([\Phi(H_{i,1}),\Phi(X^+_{n,0})]).
\end{align*}
Since $\sigma(\Phi(X^+_{n,1}))=\Phi(X^-_{n,1})$ holds, it is enough to show the $+$ case.
\end{comment}

\subsection{Compatibility with \eqref{q901}}
Let us set 
\begin{align*}
V_i&=\sum_{1\leq a\leq i}S^{(1)}_{i,a}S^{(1)}_{a,i},\ W_i=\sum_{1\leq b\leq i}S^{(1)}_{i,-b}S^{(1)}_{-b,i}.
\end{align*}
By \eqref{SS1} and the relation $S^{(1)}_{i,-i}=0$, we can rewrite
\begin{align*}
&\quad[\Phi(H_{i,1}),\Phi(H_{j,1})]\\
&=[-S^{(2)}_{i,i}+S^{(2)}_{i+1,i+1}+\hbar(V_i+W_i-V_{i+1}-W_{i+1}),\\
&\qquad\qquad-S^{(2)}_{j,j}+S^{(2)}_{j+1,j+1}+\hbar(V_j+W_j-V_{j+1}-W_{j+1})].
\end{align*}
Thus, for the proof of the compatibility with \eqref{q901}, it is enough to show the relation
\begin{align*}
[-S^{(2)}_{i,i}+\hbar(V_i+W_i),-S^{(2)}_{j,j}+\hbar(V_j+W_j)]&=0.
\end{align*}
By \eqref{SS1}, we obtain
\begin{align}
[S^{(2)}_{i,i},V_j+W_j]&=\delta(i\leq j)(S^{(1)}_{j,i}S^{(2)}_{i,j}-S^{(2)}_{j,i}S^{(1)}_{i,j})+\delta(i\leq j)(S^{(1)}_{i,-j}S^{(2)}_{-j,i}-S^{(2)}_{i,-j}S^{(1)}_{-j,i}).\label{SVW}
\end{align}
In the case that $i<j$, by \eqref{SVW}, we have
\begin{align}
[V_i+W_i,S^{(2)}_{j,j}]=0.\label{SVW2}
\end{align}
By \eqref{SVW}, \eqref{SVW2} and \eqref{SS3}, we have
\begin{align}
[S^{(2)}_{i,i},S^{(2)}_{j,j}]-[S^{(2)}_{i,i},\hbar(V_j+W_j)]-[\hbar(V_i+W_i),S^{(2)}_{j,j}]=0\label{SVW3}
\end{align}
for $i<j$. By the symmetry with respect to $i$ and $j$, \eqref{SVW3} holds for any $1\leq i,j\leq n$. Thus, we need to show that $[V_i+W_i,V_j+W_j]=0$. 

By \eqref{SS1} and a direct computation, for $i\neq j$, we obtain
\begin{align}
[V_i,V_j]&=0,\\
[V_i,W_j]&=-\delta(i>j)\sum_{1\leq b\leq j}S^{(1)}_{i,j}S^{(1)}_{j,-b}S^{(1)}_{-b,i}+\delta(i>j)\sum_{1\leq b\leq j}S^{(1)}_{i,b}S^{(1)}_{j,-b}S^{(1)}_{-j,i}\nonumber\\
&\quad+\delta(i>j)\sum_{1\leq b\leq j}S^{(1)}_{i,-b}S^{(1)}_{-b,j}S^{(1)}_{j,i}-\delta(i>j)\sum_{1\leq b\leq j}S^{(1)}_{i,-j}S^{(1)}_{-b,j}S^{(1)}_{b,i},\label{1}\\
[W_i,W_j]&=-\delta(i<j)\sum_{1\leq a\leq i}S^{(1)}_{i,-a}(S^{(1)}_{j,i})S^{(1)}_{-a,j}-\delta(i>j)\sum_{1\leq b\leq j}S^{(1)}_{i,-b}S^{(1)}_{j,i}S^{(1)}_{-b,j}\nonumber\\
&\quad-\delta(i<j)\sum_{1\leq a\leq i}S^{(1)}_{i,-a}S^{(1)}_{-a,-j}S^{(1)}_{-i,j}+\delta(i>j)\sum_{1\leq b\leq j}S^{(1)}_{i,-j}S^{(1)}_{b,i}S^{(1)}_{-b,j}\nonumber\\
&\quad+\delta(i<j)\sum_{1\leq a\leq i}S^{(1)}_{j,-a}S^{(1)}_{i,j}S^{(1)}_{-a,i}+\delta(i>j)\sum_{1\leq b\leq j}S^{(1)}_{j,-b}S^{(1)}_{i,j}S^{(1)}_{-b,i}\nonumber\\
&\quad-\delta(i>j)\sum_{1\leq b\leq j}S^{(1)}_{j,-b}S^{(1)}_{i,b}S^{(1)}_{-j,i}+\delta(i<j)\sum_{1\leq a\leq i}S^{(1)}_{j,-i}S^{(1)}_{-j,-a}S^{(1)}_{-a,i}.\label{2}
\end{align}
For the convenience of the notation, we denote the $x$-th term of the right hand side of the equality $(\cdot)$ in the case that $i=a,j=b$ by $(\cdot)_{a,b,x}$.
We assume that $i<j$. We can divide $[V_i+W_i,V_j+W_j]$ into two pieces as follows:
\begin{align*}
&\quad-\eqref{1}_{j,i,1}-\eqref{1}_{j,i,3}+\eqref{2}_{i,j,1}+\eqref{2}_{i,j,5}\\
&=\sum_{1\leq b\leq i}S^{(1)}_{j,i}S^{(1)}_{i,-b}S^{(1)}_{-b,j}-\sum_{1\leq b\leq i}S^{(1)}_{j,-b}S^{(1)}_{-b,i}S^{(1)}_{i,j}\\
&\quad-\sum_{1\leq a\leq i}S^{(1)}_{i,-a}S^{(1)}_{j,i}S^{(1)}_{-a,j}+\sum_{1\leq a\leq i}S^{(1)}_{j,-a}S^{(1)}_{i,j}S^{(1)}_{-a,i}=0,\\
&\quad-\eqref{1}_{j,i,2}-\eqref{1}_{j,i,4}+\eqref{2}_{i,j,3}+\eqref{2}_{i,j,8}\\
&=-\sum_{1\leq b\leq i}S^{(1)}_{j,b}S^{(1)}_{i,-b}S^{(1)}_{-i,j}+\sum_{1\leq b\leq i}S^{(1)}_{j,-i}S^{(1)}_{-b,i}S^{(1)}_{b,j}\\
&\quad-\sum_{1\leq a\leq i}S^{(1)}_{i,-a}S^{(1)}_{-a,-j}S^{(1)}_{-i,j}+\sum_{1\leq a\leq i}S^{(1)}_{j,-i}S^{(1)}_{-j,-a}S^{(1)}_{-a,i}=0.
\end{align*}
\subsection{Compatibility with \eqref{q901-1}}
We only show the case $j=n$. The other cases can be proven in a similar way.
\begin{comment}
First, we show the case that $j\neq n$.
By \eqref{SS1}, we obtain
\begin{align}
&\quad[-(S^{(2)}_{i,i}-S^{(2)}_{i+1,i+1}),S^{(1)}_{j,j+1}]\nonumber\\
&=-(2\delta_{i,j}-\delta_{i,j+1}-\delta_{j,i+1})S^{(2)}_{j,j+1}.\label{0000-1}
\end{align}
By \eqref{SS1}, we have
\begin{align}
&\quad[-\dfrac{i+n-1}{2}\hbar (S^{(1)}_{i,i}-S^{(1)}_{i+1,i+1})-\dfrac{\hbar}{2}(S^{(1)}_{i,i})^2-\dfrac{\hbar}{2}(S^{(1)}_{i+1,i+1})^2,S^{(1)}_{j,j+1}]\nonumber\\
&=-\dfrac{i+n-1}{2}(2\delta_{i,j}-\delta_{i,j+1}-\delta_{j,i+1})S^{(1)}_{j,j+1}\nonumber\\
&\quad-\dfrac{\hbar}{2}(\delta_{i,j}-\delta_{i,j+1})(S^{(1)}_{i,i}S^{(1)}_{j,j+1}+S^{(1)}_{j,j+1}S^{(1)}_{i,i})\nonumber\\
&\quad+\dfrac{\hbar}{2}(\delta_{i,j}-\delta_{i+1,j})(S^{(1)}_{i+1,i+1}S^{(1)}_{j,j+1}+S^{(1)}_{j,j+1}S^{(1)}_{i+1,i+1}),\label{1111-1}\\
&\quad[\hbar\sum_{a\in I_i}S^{(1)}_{i,a}S^{(1)}_{a,i},S^{(1)}_{j,j+1}]\nonumber\\
&=(\delta_{i,j}-\delta_{i,j+1})\sum_{a\in I_i}\hbar S^{(1)}_{j,a}S^{(1)}_{a,j+1}+\delta_{i,j}\hbar S^{(1)}_{i,i+1}S^{(1)}_{i,i},\label{2222-1}\\
&\quad-[\hbar\sum_{a\in I_i}S^{(1)}_{i+1,a}S^{(1)}_{a,i+1},S^{(1)}_{i,i+1}]\nonumber\\
&=-(\delta_{i+1,j}-\delta_{i,j})\sum_{a\in I_i}S^{(1)}_{j,a}S^{(1)}_{a,j+1}-\delta_{i,j}S^{(1)}_{i+1,i+1}S^{(1)}_{i,i+1}.\label{3333-1}
\end{align}

Next, we show the case that $j=n$.
\end{comment}
By \eqref{SS1} and \eqref{SS5}, we obtain
\begin{align}
&\quad[-(S^{(2)}_{i,i}-S^{(2)}_{i+1,i+1}),S^{(1)}_{n-1,-n}]\nonumber\\
&=-\delta_{i,n-1}S^{(2)}_{n-1,-n}+\delta_{i,n-2}S^{(2)}_{n-1,-n}-\delta_{i,n-1} S^{(2)}_{n,-n+1}\nonumber\\
&=-2\delta_{i,n-1}S^{(2)}_{n-1,-n}+\delta_{i,n-2}S^{(2)}_{n-1,-n}-\delta_{i,n-1}\hbar S^{(1)}_{n-1,-n}.\label{0000}
\end{align}
By \eqref{SS1}, we have
\begin{align}
&\quad[-\dfrac{i+n-1}{2}\hbar (S^{(1)}_{i,i}-S^{(1)}_{i+1,i+1})-\dfrac{\hbar}{2}(S^{(1)}_{i,i})^2-\dfrac{\hbar}{2}(S^{(1)}_{i+1,i+1})^2,S^{(1)}_{n-1,-n}]\nonumber\\
&=\dfrac{2n-3}{2}\delta_{i,n-2}S^{(1)}_{n-1,-n}-\hbar\delta_{i,n-1}S^{(1)}_{n-1,-n}S^{(1)}_{n-1,n-1}-\dfrac{\hbar}{2}\delta_{i,n-1}S^{(1)}_{n-1,-n}\nonumber\\
&\quad-\delta_{i,n-2}\hbar S^{(1)}_{n-1,n-1}S^{(1)}_{n-1,-n}+\delta_{i,n-2}\dfrac{\hbar}{2}S^{(1)}_{n-1,-n}+\delta_{i,n-1}{\hbar}S^{(1)}_{n-1,-n}S^{(1)}_{n,n}-\delta_{i,n-1}\dfrac{\hbar}{2}S^{(1)}_{n,-n+1},\label{1111}\\
&\quad[\hbar \sum_{1\leq a\leq i}S^{(1)}_{i,a}S^{(1)}_{a,i},S^{(1)}_{n-1,-n}]-[\hbar \sum_{1\leq a\leq i}S^{(1)}_{i+1,a}S^{(1)}_{a,i+1},S^{(1)}_{n-1,-n}]\nonumber\\
&=\delta_{i,n-1}\hbar \sum_{1\leq a\leq n-1}S^{(1)}_{n-1,a}S^{(1)}_{a,-n}+\delta_{i,n-1}\hbar S^{(1)}_{n-1,-n}S^{(1)}_{n-1,n-1}-\delta_{i+1,n-1}\hbar \sum_{1\leq a\leq n-2}S^{(1)}_{n-1,a}S^{(1)}_{a,-n}\nonumber\\
&\quad+\delta_{i,n-1}\hbar \sum_{1\leq a\leq n-1}S^{(1)}_{n,a}S^{(1)}_{a,-n+1},\label{2222}\\
&\quad[\hbar \sum_{1\leq a\leq i}S^{(1)}_{i,-a}S^{(1)}_{-a,i},S^{(1)}_{n-1,-n}]-[\hbar \sum_{1\leq a\leq i}S^{(1)}_{i+1,-a}S^{(1)}_{-a,i+1},S^{(1)}_{n-1,-n}]\nonumber\\
&=\delta_{i,n-1}\hbar \sum_{1\leq a\leq n-1}S^{(1)}_{n-1,-a}S^{(1)}_{-a,-n}-\delta_{i+1,n-1}\hbar \sum_{1\leq a\leq n-2}S^{(1)}_{n-1,-a}S^{(1)}_{-a,-n}\nonumber\\
&\quad+\delta_{i,n-1}\hbar \sum_{1\leq a\leq n-1}S^{(1)}_{n,-a}S^{(1)}_{-a,-n+1}+\delta_{i,n-1}\hbar S^{(1)}_{n,-n+1}S^{(1)}_{n,n}.\label{3333}
\end{align}
By a direct computation, we obtain
\begin{align*}
&\quad\eqref{0000}_2+\eqref{1111}_{1}+\eqref{1111}_{4}+\eqref{1111}_{5}+\eqref{2222}_{3}+\eqref{3333}_4\\
&=\delta_{i,n-2}S^{(2)}_{n-1,-n}+(n-1)\delta_{i,n-2}\hbar S^{(1)}_{n-1,-n}-\delta_{i,n-2}\hbar \sum_{1\leq a\leq n-2}S^{(1)}_{n-1,a}S^{(1)}_{a,-n}\\
&\quad-\delta_{i,n-2}\hbar \sum_{1\leq a\leq n-2}S^{(1)}_{n-1,-a}S^{(1)}_{-a,-n}\\
&=-\delta_{i,n-2}\Phi(X^+_{n,1})
\end{align*}
and
\begin{align*}
&\quad\eqref{0000}_1+\eqref{0000}_3+\eqref{1111}_{2}+\eqref{1111}_{3}+\eqref{1111}_{6}+\eqref{1111}_{7}\\
&\qquad\qquad+\eqref{2222}_{1}+\eqref{2222}_{2}+\eqref{2222}_{4}+\eqref{3333}_1+\eqref{3333}_3+\eqref{3333}_4\\
&=-2\delta_{i,n-1}S^{(2)}_{n-1,-n}+2\delta_{i,n-1}\hbar \sum_{1\leq a\leq n-1}S^{(1)}_{n-1,a}S^{(1)}_{a,-n}\\
&\qquad\qquad+2\delta_{i+1,n}\hbar \sum_{1\leq a\leq n-1}S^{(1)}_{n-1,-a}S^{(1)}_{-a,-n}-2(n-1)\hbar S^{(1)}_{n-1,-n}\\
&=2\delta_{i,n-1}\Phi(X^+_{n,1}).
\end{align*}
Thus, we obtain
\begin{align*}
[\Phi(H_{i,1})-\dfrac{\hbar}{2}\Phi(H_{i,0}^2),X^+_{n,0}]&=(2\delta_{i,n-1}-\delta_{i,n-2})\Phi(X^+_{n,1}).
\end{align*}
We can prove the $-$ case in the same way.
\begin{comment}
\subsection{Compatibility with \eqref{q901-2-0}}
By \eqref{SS1} and \eqref{SS1}, we obtain
\begin{align*}
&[-\dfrac{i+n-1}{2}\hbar S^{(1)}_{i,i+1},S^{(1)}_{i+1,i}]\\
&\quad+[\hbar\sum_{1\leq a\leq i}S^{(1)}_{i,a}S^{(1)}_{a,i+1},S^{(1)}_{i+1,i}]+[\hbar\sum_{1\leq a\leq i}S^{(1)}_{i,-a}S^{(1)}_{-a,i+1},S^{(1)}_{i+1,i}]\\
&=-\dfrac{i+n-1}{2}(S^{(1)}_{i,i}-S^{(1)}_{i+1,i+1})\\
&\quad+\hbar\sum_{1\leq a\leq i}S^{(1)}_{i,a}S^{(1)}_{a,i}-\hbar\sum_{1\leq a\leq i}S^{(1)}_{i+1,a}S^{(1)}_{a,i+1}-\hbar S^{(1)}_{i,i}S^{(1)}_{i+1,i+1}\\
&\quad+\hbar\sum_{1\leq a\leq i-1}S^{(1)}_{i,-a}S^{(1)}_{-a,i}-\hbar\sum_{1\leq a\leq i-1}S^{(1)}_{i+1,-a}S^{(1)}_{-a,i+1}.
\end{align*}
\end{comment}
\subsection{Compatibility with \eqref{q901-3}}
By \eqref{SS1} and \eqref{SS6}, we obtain
\begin{align*}
[-S^{(2)}_{n-1,-n},S^{(1)}_{-n,n-1}]&=-S^{(2)}_{n-1,n-1}+S^{(2)}_{-n,-n}=-S^{(2)}_{n-1,n-1}+S^{(2)}_{n,n}+\hbar S^{(1)}_{n,n}.
\end{align*}
By \eqref{SS1} and \eqref{SS1}, we have
\begin{align*}
&\quad-(n-1)\hbar[S^{(1)}_{n-1,-n},S^{(1)}_{-n,n-1}]+[\hbar\sum_{1\leq a\leq n-1}S^{(1)}_{n-1,a}S^{(1)}_{a,-n},S^{(1)}_{-n,n-1}]\\
&\qquad\qquad+[\hbar\sum_{1\leq a\leq n-1}S^{(1)}_{n-1,-a}S^{(1)}_{-a,-n},S^{(1)}_{-n,n-1}]\\
&=-(n-1)\hbar(S^{(1)}_{n-1,n-1}+S^{(1)}_{n,n})+\hbar S^{(1)}_{n-1,n-1}S^{(1)}_{n,n}\\
&\quad+\hbar\sum_{1\leq a\leq n-1}S^{(1)}_{n-1,a}S^{(1)}_{a,n-1}-\hbar\sum_{1\leq a\leq n-1}S^{(1)}_{-n,a}S^{(1)}_{a,-n}\\
&\quad+\hbar\sum_{1\leq a\leq n-2}S^{(1)}_{n-1,-a}S^{(1)}_{-a,n-1}-\hbar\sum_{1\leq a\leq n-2}S^{(1)}_{-n,-a}S^{(1)}_{-a,-n}.
\end{align*}
Since 
\begin{align*}
&\quad-\hbar\sum_{1\leq a\leq n-1}S^{(1)}_{-n,a}S^{(1)}_{a,-n}-\hbar\sum_{1\leq a\leq n-2}S^{(1)}_{-n,-a}S^{(1)}_{-a,-n}\\
&=-\hbar\sum_{1\leq a\leq n-1}S^{(1)}_{n,-a}S^{(1)}_{-a,n}-\hbar\sum_{1\leq a\leq n-2}S^{(1)}_{n,a}S^{(1)}_{a,n}-(2n-3)\hbar S^{(1)}_{-n,-n}
\end{align*}
holds by a direct computation, we have proved the compatibility with \eqref{q901-3}.
\subsection{Compatibility with \eqref{q901-10}}
We only show the case that $i=n$. The other cases can be proven in a similar way.
\begin{comment}
First, we show the case that $i\neq n$. By \eqref{SS1} and \eqref{SS1}, we obtain
\begin{align*}
&[-\dfrac{i+n-1}{2}\hbar S^{(1)}_{i,i+1},S^{(1)}_{i,i+1}]\\
&\quad+[\hbar\sum_{1\leq a\leq i}S^{(1)}_{i,a}S^{(1)}_{a,i+1},S^{(1)}_{i,i+1}]+[\hbar\sum_{1\leq a\leq i-1}S^{(1)}_{i,-a}S^{(1)}_{-a,i+1},S^{(1)}_{i,i+1}]\\
&=-\hbar\sum_{1\leq a\leq i}\delta_{a,i+1}S^{(1)}_{i,a}S^{(1)}_{i,i+1})+\hbar\sum_{1\leq a\leq i}\delta_{a,i}S^{(1)}_{i,i+1}S^{(1)}_{a,i+1}\\
&=\delta_{i,j}\hbar S^{(1)}_{i,i+1}S^{(1)}_{i,i+1}.
\end{align*}

Next, we show the case that $i=n$. 
\end{comment}
By \eqref{SS1}, we obtain
\begin{align*}
&\quad[-S^{(2)}_{n-1,-n}-(n-1)\hbar S^{(1)}_{n-1,-n}+\hbar\sum_{1\leq a\leq n-1}S^{(1)}_{n-1,a}S^{(1)}_{a,-n}+\hbar\sum_{1\leq a\leq n-1}S^{(1)}_{n-1,-a}S^{(1)}_{-a,-n},S^{(1)}_{n-1,-n}]\\
&=0+\hbar\sum_{1\leq a\leq n-1}(\delta_{a,n-1}S^{(1)}_{n-1,-n})S^{(1)}_{a,-n}+0+0\\
&=\hbar S^{(1)}_{n-1,-n}S^{(1)}_{n-1,-n}.
\end{align*}
Thus, we have proved the $+$ case. The $-$ case can be proven in a similar way.

\subsection{Compatibility with \eqref{q901-2-1}}
First, we compute $[\Phi(X^+_{n-1,1}),\Phi(X^-_{n-1,1})]$. By \eqref{SS1} and \eqref{SS6.5}, we obtain
\begin{align}
&\quad[S^{(2)}_{n-1,n},S^{(2)}_{n,n-1}]\nonumber\\
&=S^{(3)}_{n-1,n-1}+\hbar S^{(1)}_{n,n}S^{(2)}_{n-1,n-1}-S^{(3)}_{n,n}-\hbar S^{(2)}_{n,n}S^{(1)}_{n-1,n-1}-\hbar S^{(1)}_{n-1,-n}S^{(2)}_{-n,n-1}+\hbar S^{(2)}_{n-1,-n}S^{(1)}_{-n+1,n},\label{eee1}\\
&\quad[S^{(2)}_{n-1,n},\hbar (n-1) S^{(1)}_{n,n-1}]+[\hbar (n-1) S^{(1)}_{n-1,n},S^{(2)}_{n,n-1}]+[\hbar (n-1) S^{(1)}_{n-1,n},\hbar (n-1) S^{(1)}_{n,n-1}]\nonumber\\
&=2(n-1)\hbar (S^{(2)}_{n-1,n-1}-S^{(2)}_{n,n})+\hbar^2 (n-1)^2(S^{(1)}_{n-1,n-1}-S^{(1)}_{n,n}),\label{eee2}\\
&\quad[S^{(2)}_{n-1,n},\hbar \sum_{a\in I_{n-1}}S^{(1)}_{n,a}S^{(1)}_{a,n-1}]\nonumber\\
&=\hbar \sum_{a\in I_{n-1}}S^{(2)}_{n-1,a}S^{(1)}_{a,n-1}-\hbar \sum_{a\in I_{n-1}}S^{(1)}_{n,a}S^{(2)}_{a,n}-\hbar S^{(2)}_{n,n}S^{(1)}_{n-1,n-1}+\hbar S^{(1)}_{n,-n+1}S^{(2)}_{-n+1,n},\label{eee3}\\
&\quad[\hbar S^{(1)}_{n-1,n},\hbar \sum_{a\in I_{n-1}}S^{(1)}_{n,a}S^{(1)}_{a,n-1}]\nonumber\\
&=\hbar^2 \sum_{a\in I_{n-1}}S^{(1)}_{n-1,a}S^{(1)}_{a,n-1}-\hbar^2\sum_{a\in I_{n-1}}S^{(1)}_{n,a}S^{(1)}_{a,n}-\hbar^2 S^{(1)}_{n,n}S^{(1)}_{n-1,n-1}+\hbar^2 S^{(1)}_{n,-n+1}S^{(1)}_{-n+1,n},\label{eee4}\\
&\quad[\hbar \sum_{a\in I_{n-1}}S^{(1)}_{n-1,a}S^{(1)}_{a,n},S^{(2)}_{n,n-1}]\nonumber\\
&=\hbar \sum_{a\in I_{n-1}}S^{(1)}_{n-1,a}S^{(2)}_{a,n-1}-\hbar \sum_{a\in I_{n-1}}S^{(2)}_{n,a}S^{(1)}_{a,n}-\hbar S^{(1)}_{n-1,n-1}S^{(2)}_{n,n}+\hbar S^{(2)}_{n,-n+1}S^{(1)}_{-n+1,n},\label{eee5}\\
&\quad[\hbar \sum_{a\in I_{n-1}}S^{(1)}_{n-1,a}S^{(1)}_{a,n},\hbar S^{(1)}_{n,n-1}]\nonumber\\
&=\hbar^2 \sum_{a\in I_{n-1}}S^{(1)}_{n-1,a}S^{(1)}_{a,n-1}-\hbar^2 \sum_{a\in I_{n-1}}S^{(1)}_{n,a}S^{(1)}_{a,n}-\hbar^2 S^{(1)}_{n-1,n-1}S^{(1)}_{n,n}+\hbar^2 S^{(1)}_{n,-n+1}S^{(1)}_{-n+1,n}.\label{eee6}\\
&\quad[\hbar \sum_{a\in I_{n-1}}S^{(1)}_{n-1,a}S^{(1)}_{a,n},\hbar \sum_{b\in I_{n-1}}S^{(1)}_{n,b}S^{(1)}_{b,n-1}]\nonumber\\
&=-\hbar^2 \sum_{b\in I_{n-1}}S^{(1)}_{n-1,n-1}S^{(1)}_{n,b}S^{(1)}_{b,n}+\hbar^2 S^{(1)}_{n,-n+1}S^{(1)}_{-n,n-1}+2(n-1)\hbar^2 S^{(1)}_{n-1,-n}S^{(1)}_{-n,n-1}\nonumber\\
&\quad+\hbar^2 \sum_{a,b\in I_{n-1}}S^{(1)}_{n-1,a}S^{(1)}_{a,b}S^{(1)}_{b,n-1}-\hbar^2 \sum_{a\in I_{n-1}}S^{(1)}_{n-1,a}S^{(1)}_{n,n}S^{(1)}_{b,n-1}-\hbar^2 \sum_{a,b\in I_{n-1}}S^{(1)}_{n,b}S^{(1)}_{b,a}S^{(1)}_{a,n}.\label{eee7}
\end{align}
Since we obtain
\begin{align*}
&\quad\eqref{eee1}_4-\eqref{eee3}_3=0,\\
&\quad\eqref{eee1}_5-\eqref{eee3}_4\\
&=-\hbar S^{(1)}_{n-1,-n}S^{(2)}_{-n,n-1}-\hbar S^{(1)}_{n,-n+1}S^{(2)}_{-n+1,n}\\
&=-\hbar S^{(1)}_{n-1,-n}S^{(2)}_{-n,n-1}+\hbar S^{(1)}_{n-1,-n}(S^{(2)}_{-n,n-1}-\hbar S^{(1)}_{-n+1,n})=-\hbar^2 S^{(1)}_{n-1,-n}S^{(1)}_{-n+1,n},\\
&\quad\eqref{eee1}_6-\eqref{eee5}_4\\
&=\hbar S^{(2)}_{n-1,-n}S^{(1)}_{-n+1,n}-\hbar S^{(2)}_{n,-n+1}S^{(1)}_{-n+1,n}=-\hbar^2 S^{(1)}_{n-1,-n}S^{(1)}_{-n+1,n},\\
&\quad-(n-1)\eqref{eee4}_4-(n-1)\eqref{eee6}_4+\eqref{eee7}_3=0,\\
&\quad\eqref{eee2}_1-(n-1)\eqref{eee4}_1\\
&\qquad\qquad-(n-1)\eqref{eee4}_2-\eqref{eee6}_1-(n-1)\eqref{eee6}_2-(n-1)\eqref{eee4}_3-(n-1)\eqref{eee6}_3\\
&=-2(n-1)\hbar \Phi(H_{n-1,1})+2(n-1)^2\hbar^2 \Phi(H_{n-1,0})
\end{align*}
by \eqref{SS5}, \eqref{SS6} and a direct computation, we obtain
\begin{align*}
&\quad[\Phi(X^+_{n-1,1}),\Phi(X^-_{n-1,1})]\\
&=S^{(3)}_{n-1,n-1}-S^{(3)}_{n,n}+\hbar S^{(1)}_{n,n}S^{(2)}_{n-1,n-1}+\hbar S^{(2)}_{n,n}S^{(1)}_{n-1,n-1}\\
&\quad-2(n-1)\hbar \Phi(H_{n-1,1})-(n-1)^2\hbar^2 \Phi(H_{n-1,0})\\
&\quad-\hbar \sum_{a\in I_{n-1}}(S^{(2)}_{n-1,a}S^{(1)}_{a,n-1}+S^{(1)}_{n-1,a}S^{(2)}_{a,n-1})+\hbar \sum_{a\in I_{n-1}}(S^{(1)}_{n,a}S^{(2)}_{a,n}+S^{(2)}_{n,a}S^{(1)}_{a,n})\\
&\quad-\hbar^2 \sum_{b\in I_{n-1}}S^{(1)}_{n-1,n-1}S^{(1)}_{n,b}S^{(1)}_{b,n}+\hbar^2 S^{(1)}_{n,-n+1}S^{(1)}_{-n,n-1}\\
&\quad+\hbar^2 \sum_{a,b\in I_{n-1}}S^{(1)}_{n-1,a}S^{(1)}_{a,b}S^{(1)}_{b,n-1}-\hbar^2 \sum_{a\in I_{n-1}}S^{(1)}_{n-1,a}S^{(1)}_{n,n}S^{(1)}_{b,n-1}-\hbar^2 \sum_{a,b\in I_{n-1}}S^{(1)}_{n,b}S^{(1)}_{b,a}S^{(1)}_{a,n}
\end{align*}
Next, we compute $[[\Phi(X^+_{n-1,1}),\Phi(X^-_{n-1,1})],X^+_{n,0}]$. By \eqref{SS1} and \eqref{SS6.5}, we obtain
\begin{align}
&\quad[S^{(3)}_{n-1,n-1}-S^{(3)}_{n,n},S^{(1)}_{n-1,-n}]=S^{(3)}_{n-1,-n}+S^{(3)}_{n,-n+1}=0,\label{qqq1}\\
&\quad[\hbar S^{(1)}_{n,n}S^{(2)}_{n-1,n-1}+\hbar S^{(2)}_{n,n}S^{(1)}_{n-1,n-1},S^{(1)}_{n-1,-n}]\nonumber\\
&=\hbar S^{(1)}_{n,n}S^{(2)}_{n-1,-n}-\hbar S^{(1)}_{n,-n+1}S^{(2)}_{n-1,n-1}-\hbar S^{(2)}_{n,-n+1}S^{(1)}_{n-1,n-1}+\hbar S^{(2)}_{n,n}S^{(1)}_{n-1,-n},\label{qqq2}\\
&\quad[\hbar \sum_{a\in I_{n-1}}S^{(2)}_{n-1,a}S^{(1)}_{a,n-1},S^{(1)}_{n-1,-n}]\nonumber\\
&=\hbar \sum_{a\in I_{n-1}}S^{(2)}_{n-1,a}S^{(1)}_{a,-n}+\hbar S^{(2)}_{n-1,-n+1}S^{(1)}_{n,n-1}+\hbar S^{(2)}_{n-1,-n}S^{(1)}_{n-1,n-1},\label{qqq3}\\
&\quad[\hbar \sum_{a\in I_{n-1}}S^{(1)}_{n-1,a}S^{(2)}_{a,n-1},S^{(1)}_{n-1,-n}]=\hbar \sum_{a\in I_{n-1}}S^{(1)}_{n-1,a}S^{(2)}_{a,-n}+\hbar S^{(1)}_{n-1,-n}S^{(2)}_{n-1,n-1},\label{qqq4}\\
&\quad[\hbar \sum_{a\in I_{n-1}}S^{(1)}_{n,a}S^{(2)}_{a,n},S^{(1)}_{n-1,-n}]=-\hbar \sum_{a\in I_{n-1}}S^{(1)}_{n,a}S^{(2)}_{a,-n+1}+\hbar S^{(1)}_{n,-n+1}S^{(2)}_{n,n},\label{qqq5}\\
&\quad[\hbar \sum_{a\in I_{n-1}}S^{(2)}_{n,a}S^{(1)}_{a,n},S^{(1)}_{n-1,-n}]=-\hbar \sum_{a\in I_{n-1}}S^{(2)}_{n,a}S^{(1)}_{a,-n+1}+\hbar S^{(2)}_{n,-n+1}S^{(1)}_{n,n}+\hbar S^{(2)}_{n,-n}S^{(1)}_{n-1,n},\label{qqq6}\\
&\quad [\hbar^2 \sum_{b\in I_{n-1}}S^{(1)}_{n-1,n-1}S^{(1)}_{n,b}S^{(1)}_{b,n},S^{(1)}_{n-1,-n}]\nonumber\\
&=-\hbar^2 \sum_{b\in I_{n-1}}S^{(1)}_{n-1,n-1}S^{(1)}_{n,b}S^{(1)}_{b,-n+1}+\hbar^2 S^{(1)}_{n-1,n-1}S^{(1)}_{n,-n+1}S^{(1)}_{n,n}+\hbar^2 \sum_{b\in I_{n-1}}S^{(1)}_{n-1,-n}S^{(1)}_{n,b}S^{(1)}_{b,n},\label{qqq7}\\
&\quad[\hbar^2 S^{(1)}_{n,-n+1}S^{(1)}_{-n,n-1},S^{(1)}_{n-1,-n}]=\hbar^2 S^{(1)}_{n,-n+1}[S^{(1)}_{-n,n-1},S^{(1)}_{n-1,-n}]\nonumber\\
&=\hbar^2 S^{(1)}_{n,-n+1}S^{(1)}_{-n,-n}-\hbar^2 S^{(1)}_{n,-n+1}S^{(1)}_{n-1,n-1},\label{qqq8}\\
&\quad[\hbar^2 \sum_{a,b\in I_{n-1}}S^{(1)}_{n-1,a}S^{(1)}_{a,b}S^{(1)}_{b,n-1},S^{(1)}_{n-1,-n}]\nonumber\\
&=\hbar^2 \sum_{a,b\in I_{n-1}}S^{(1)}_{n-1,a}S^{(1)}_{a,b}S^{(1)}_{b,-n}+\hbar^2 \sum_{a\in I_{n-1}}S^{(1)}_{n-1,a}S^{(1)}_{a,-n+1}S^{(1)}_{n,n-1}\nonumber\\
&\quad+\hbar^2 \sum_{a\in I_{n-1}}S^{(1)}_{n-1,a}S^{(1)}_{a,-n}S^{(1)}_{n-1,n-1}+\hbar^2 \sum_{b\in I_{n-1}}S^{(1)}_{n-1,-n}S^{(1)}_{n-1,b}S^{(1)}_{b,n-1},\label{qqq9}\\
&\quad[\hbar^2 \sum_{a\in I_{n-1}}S^{(1)}_{n-1,a}S^{(1)}_{n,n}S^{(1)}_{a,n-1},S^{(1)}_{n-1,-n}]\nonumber\\
&=\hbar^2 \sum_{a\in I_{n-1}}S^{(1)}_{n-1,a}S^{(1)}_{n,n}S^{(1)}_{a,-n}-\hbar^2 \sum_{a\in I_{n-1}}S^{(1)}_{n-1,a}S^{(1)}_{n,-n+1}S^{(1)}_{a,n-1}+\hbar^2 S^{(1)}_{n-1,-n}S^{(1)}_{n,n}S^{(1)}_{n-1,n-1},\label{qqq10}\\
&\quad[\hbar^2 \sum_{a,b\in I_{n-1}}S^{(1)}_{n,b}S^{(1)}_{b,a}S^{(1)}_{a,n},S^{(1)}_{n-1,-n}]\nonumber\\
&=-\hbar^2 \sum_{a,b\in I_{n-1}}S^{(1)}_{n,b}S^{(1)}_{b,a}S^{(1)}_{a,-n+1}+\hbar^2 \sum_{b\in I_{n-1}}S^{(1)}_{n,b}S^{(1)}_{b,-n+1}S^{(1)}_{n,n}\nonumber\\
&\quad+\hbar^2 \sum_{b\in I_{n-1}}S^{(1)}_{n,b}S^{(1)}_{b,-n}S^{(1)}_{n-1,n}+\hbar^2 \sum_{a\in I_{n-1}}S^{(1)}_{n,-n+1}S^{(1)}_{n,a}S^{(1)}_{a,n}.\label{qqq11}
\end{align}
By \eqref{SS5}, \eqref{SS6} and \eqref{SS1}, we have
\begin{align}
&\quad\eqref{qqq2}_3-\eqref{qqq3}_3=-2\hbar S^{(2)}_{n-1,-n}S^{(1)}_{n-1,n-1}+\hbar^2 S^{(1)}_{n,-n+1}S^{(1)}_{n-1,n-1},\label{rrr1}\\
&\quad-\eqref{qqq3}_1+\eqref{qqq5}_2=-(2n-2)\hbar S^{(2)}_{n,-n+1}-\hbar S^{(2)}_{n-1,-n}+\hbar^2 \sum_{a\in I_n}S^{(1)}_{n-1,a}(S^{(1)}_{a,-n}),\label{rrr2}\\
&\quad\eqref{qqq2}_4+\eqref{qqq5}_2=-\hbar S^{(2)}_{n,-n+1},\label{rrr3}\\
&\quad\eqref{qqq2}_2-\eqref{qqq4}_2=0,\label{rrr4}\\
&\quad-\eqref{qqq4}_1+\eqref{qqq5}_1=\hbar^2 \sum_{a\in I_{n-1}}S^{(1)}_{n-1,a}S^{(1)}_{a,-n}-(2n-2)\hbar S^{(2)}_{n,-n+1}+\hbar S^{(2)}_{n,-n+1},\label{rrr5}\\
&\quad\eqref{qqq2}_1+\eqref{qqq6}_2=2\hbar S^{(2)}_{n,-n+1}S^{(1)}_{n,n}+\hbar S^{(2)}_{n,-n+1}-\hbar^2 S^{(1)}_{n,n}S^{(1)}_{n-1,-n},\label{rrr6}\\
&\quad-\eqref{qqq7}_1+\eqref{qqq9}_3\nonumber\\
&=\hbar^2 \sum_{b\in I_{n-1}}S^{(1)}_{n,b}S^{(1)}_{b,-n+1}+2\hbar^2 \sum_{b\in I_{n-1}}S^{(1)}_{n-1,b}S^{(1)}_{b,-n}+(2n-3)\hbar^2 S^{(1)}_{n,-n+1}S^{(1)}_{n-1,n-1},\label{rrr7}\\
&\quad-\eqref{qqq7}_3-\eqref{qqq11}_4=0,\label{rrr8}\\
&\quad-\eqref{qqq7}_2-\eqref{qqq10}_3=\hbar^2 S^{(1)}_{n-1,-n}S^{(1)}_{n,n},\label{rrr9}\\
&\quad-\eqref{qqq10}_1-\eqref{qqq11}_2\nonumber\\
&=-(2n-1)\hbar^2 S^{(1)}_{n-1,-n}S^{(1)}_{n,n}-\hbar^2 \sum_{a\in I_{n-1}}S^{(1)}_{n-1,a}S^{(1)}_{a,-n}-2\hbar^2 \sum_{b\in I_{n-1}}S^{(1)}_{n,b}S^{(1)}_{b,-n+1}S^{(1)}_{n,n},\label{rrr10}\\
&\quad\eqref{qqq9}_1-\eqref{qqq11}_1=(2n-3)\hbar^2 \sum_{a\in I_{n-1}}S^{(1)}_{n-1,a}S^{(1)}_{a,-n}+(2n-3)\hbar^2 \sum_{b\in I_{n-1}}S^{(1)}_{b,-n}S^{(1)}_{n-1,b},\label{rrr11}\\
&\quad\eqref{qqq9}_4-\eqref{qqq10}_2=-\hbar^2 S^{(1)}_{n-1,-n}S^{(1)}_{n-1,n-1}.\label{rrr12}
\end{align}
Since we obtain
\begin{align*}
[\Phi(X^+_{n-1,1}),X^+_{n,0}]
&=S^{(2)}_{n-1,-n+1}-\hbar \sum_{a\in I_n}S^{(1)}_{n-1,a}S^{(1)}_{a,-n+1}+\hbar S^{(1)}_{n-1,-n}S^{(1)}_{n-1,n},\\
[\Phi(X^-_{n-1,1}),X^+_{n,0}]
&=-S^{(2)}_{n,-n}+\hbar \sum_{a\in I_n}S^{(1)}_{n,a}S^{(1)}_{a,-n}+\hbar S^{(1)}_{n,-n+1}S^{(1)}_{n,n-1},
\end{align*}
we have
\begin{align}
-\eqref{qqq3}_2+\eqref{qqq9}_2
&=-\hbar [\Phi(X^+_{n-1,1}),X^+_{n,0}]S^{(1)}_{n,n-1}+\hbar^2 S^{(1)}_{n-1,-n}S^{(1)}_{n-1,n}S^{(1)}_{n,n-1},\label{rrr13}\\
\eqref{qqq6}_3-\eqref{qqq11}_3
&=-\hbar [\Phi(X^-_{n-1,1}),X^+_{n,0}]S^{(1)}_{n-1,n}+\hbar^2 S^{(1)}_{n,-n+1}S^{(1)}_{n,n-1}S^{(1)}_{n-1,n}.\label{rrr14}
\end{align}
Since
\begin{align*}
&\quad\eqref{rrr1}_1+\eqref{rrr7}_2=2\hbar \Phi(X^+_{n,1})S^{(1)}_{n,n}+2(n-1)\hbar^2 S^{(1)}_{n-1,-n}S^{(1)}_{n,n},\\
&\quad\eqref{rrr6}_1+\eqref{rrr10}_3=-2\hbar \Phi(X^+_{n,1})S^{(1)}_{n-1,n-1}-2(n-1)\hbar^2 S^{(1)}_{n-1,-n}S^{(1)}_{n-1,n-1},\\
&\quad\eqref{rrr2}_3+\eqref{rrr5}_1+\eqref{rrr10}_2+\eqref{rrr11}_1=(2n-2)\hbar^2 \sum_{a\in I_{n-1}}S^{(1)}_{n-1,a}S^{(1)}_{a,-n},\\
&\quad\eqref{rrr7}_1+\eqref{rrr11}_2=(2n-2)\hbar^2 \sum_{a\in I_{n-1}}S^{(1)}_{n,a}S^{(1)}_{a,-n+1}\\
&=(2n-2)\hbar^2 \sum_{a\in I_{n-1}}S^{(1)}_{n-1,a}S^{(1)}_{a,-n}+(2n-2)\hbar^2 \sum_{a\in I_{n-1}}(S^{(1)}_{n,-n+1}-\delta_{a,-n+1}S^{(1)}_{n,-n+1})\\
&=(2n-2)\hbar^2 \sum_{a\in I_{n-1}}S^{(1)}_{n-1,a}S^{(1)}_{a,-n}+(2n-2)(2n-3)\hbar^2 S^{(1)}_{n,-n+1},\\
&\quad\eqref{rrr2}_1+\eqref{rrr2}_2+\eqref{rrr3}+\eqref{rrr5}_2+\eqref{rrr5}_3+\eqref{rrr6}_2\\
&=-\hbar S^{(2)}_{n-1,-n}-(4n-5)\hbar S^{(2)}_{n,-n+1}=-(4n-4)\hbar S^{(2)}_{n-1,-n}-(4n-5)\hbar^2S^{(1)}_{n,-n+1}\\
&\quad\eqref{qqq8}_1+\eqref{rrr6}_3+\eqref{rrr9}+\eqref{rrr10}_1\\
&=-\hbar^2[S^{(1)}_{n-1,-n},S^{(1)}_{n,n}]-2(n-1)\hbar^2S^{(1)}_{n-1,-n}S^{(1)}_{n,n}=\hbar^2 S^{(1)}_{n-1,-n}-2(n-1)\hbar^2 S^{(1)}_{n-1,-n}S^{(1)}_{n,n},\\
&\quad\eqref{qqq8}_2+\eqref{rrr1}_2+\eqref{rrr7}_3+\eqref{rrr12}=(2n-2)\hbar^2 S^{(1)}_{n,-n+1}S^{(1)}_{n-1,n-1},
\end{align*}
we have
\begin{align*}
&\quad[[\Phi(X^+_{n-1,1}),\Phi(X^-_{n-1,1})],X^+_{n,0}]\\
&=-2(n-1)\hbar [\Phi(H_{n-1,1}),X^+_{n,0}]-2\hbar \Phi(X^+_{n,1})\Phi(H_{n-1,0})\\
&\quad-2(n-1)\hbar \Phi(X^+_{n,0})(\Phi(H_{n-1,0})+\Phi(H_{n,0}))\\
&\quad+(4n-4)\hbar \Phi(X^+_{n,1})+6(n-1)\hbar^2 \Phi(X^+_{n,0}).
\end{align*}
\subsection{Compatibility with \eqref{q903-1}}
By \eqref{SS1}, we obtain
\begin{align*}
&\quad[\Phi(X^+_{n,1}),[\Phi(X^+_{n-2,0}),\Phi(X^+_{n-1,0})]]\\
&=[-S^{(2)}_{n-1,-n}-(n-1)\hbar S^{(1)}_{n-1,-n}+\hbar\sum_{a\in I_{n-1}}S^{(1)}_{n-1,a}S^{(1)}_{a,-n},[S^{(1)}_{n-2,n-1},S^{(1)}_{n-1,n}]]\\
&=[-S^{(2)}_{n-1,-n}-(n-1)\hbar S^{(1)}_{n-1,-n}+\hbar\sum_{a\in I_{n-1}}S^{(1)}_{n-1,a}S^{(1)}_{a,-n},S^{(1)}_{n-2,n}]\\
&=S^{(2)}_{n-1,-n+2}+(n-1)\hbar S^{(1)}_{n-1,-n+2}\\
&\quad-\hbar\sum_{a\in I_{n-1}}S^{(1)}_{n-1,a}S^{(1)}_{a,-n+2}+\hbar S^{(1)}_{n-1,n}S^{(1)}_{n-2,-n}+\hbar S^{(1)}_{n-1,-n+2}S^{(1)}_{-n,-n}
\end{align*}
and
\begin{align*}
&\quad[[\Phi(X^+_{n,1}),\Phi(X^+_{n-1,0})],\Phi(X^+_{n-2,0})]\\
&=[[-S^{(2)}_{n-1,-n}-(n-1)\hbar S^{(1)}_{n-1,-n}+\hbar\sum_{a\in I_{n-1}}S^{(1)}_{n-1,a}S^{(1)}_{a,-n},S^{(1)}_{n-2,n-1}],S^{(1)}_{n-1,n}]]\\
&=[S^{(2)}_{n-2,-n}+(n-1)\hbar S^{(1)}_{n-2,-n}\\
&\quad-\hbar\sum_{a\in I_{n-1}}S^{(1)}_{n-2,a}S^{(1)}_{a,-n},S^{(1)}_{n-1,n}]\\
&=-S^{(2)}_{n-2,-n+1}-(n-1)\hbar S^{(1)}_{n-2,-n+1}\\
&\quad+\hbar\sum_{a\in I_{n-1}}S^{(1)}_{n-2,a}S^{(1)}_{a,-n+1}-\hbar S^{(1)}_{n-2,n}S^{(1)}_{n-1,-n}-\hbar S^{(1)}_{n-2,-n+1}S^{(1)}_{-n,-n}
\end{align*}
Thus, we obtain
\begin{align*}
&\quad[\Phi(X^+_{n,1}),[\Phi(X^+_{n-2,0}),\Phi(X^+_{n-1,0})]]\\
&=S^{(2)}_{n-1,-n+2}+S^{(2)}_{n-2,-n+1}-\hbar\sum_{a\in I_{n-1}}S^{(1)}_{n-1,a}S^{(1)}_{a,-n+2}-\hbar\sum_{a\in I_{n-1}}S^{(1)}_{n-2,a}S^{(1)}_{a,-n+1}\\
&\quad+\hbar S^{(1)}_{n-1,n}S^{(1)}_{n-2,-n}+\hbar S^{(1)}_{n-2,n}S^{(1)}_{n-1,-n}\\
&=-\hbar S^{(1)}_{n-1,-n+2}-(n-1)S^{(1)}_{n-1,-n+2}+\hbar S^{(1)}_{n-1,n}S^{(1)}_{n-2,-n}+\hbar S^{(1)}_{n-2,n}S^{(1)}_{n-1,-n}\\
&=-(n-1)S^{(1)}_{n-1,-n+2}+\hbar S^{(1)}_{n-1,n}S^{(1)}_{n-2,-n}+\hbar S^{(1)}_{n-1,-n}S^{(1)}_{n-2,n}.
\end{align*}
\subsection{Compatibility with \eqref{eq903-3}}
By \eqref{SS1}, we obtain
\begin{align*}
&\quad[[\Phi(X^+_{n,0}),\Phi(X^+_{n-2,0})],[\Phi(X^+_{n,0}),\Phi(X^+_{n-1,1})]]\\
&=-[S^{(1)}_{n-2,-n},[S^{(1)}_{n-1,-n},-S^{(2)}_{n-1,n}-(n-1)\hbar S^{(1)}_{n-1,n}+\hbar\sum_{a\in I_{n-1}}S^{(1)}_{n-1,a}S^{(1)}_{a,n}]]\\
&=-[S^{(1)}_{n-2,-n},-\hbar\sum_{a\in I_{n-1}}S^{(1)}_{n-1,a}S^{(1)}_{a,-n+1}-\hbar S^{(1)}_{n-1,-n}S^{(1)}_{n-1,n}]\\
&=\hbar\sum_{a\in I_{n-1}}[S^{(1)}_{n-2,-n},S^{(1)}_{n-1,a}S^{(1)}_{a,-n+1}]+\hbar S^{(1)}_{n-1,-n}[S^{(1)}_{n-2,-n},S^{(1)}_{n-1,n}]\\
&=-\hbar S^{(1)}_{n-1,-n}S^{(1)}_{n-2,-n+1}-\hbar S^{(1)}_{n-1,-n+2}S^{(1)}_{n,-n+1}+\hbar S^{(1)}_{n-1,-n}S^{(1)}_{n-1,-n+2}\\
&=-\hbar S^{(1)}_{n-1,-n}S^{(1)}_{n-2,-n+1}.
\end{align*}

\subsection{Compatibility with \eqref{eq903-7}}
By \eqref{SS1}, we obtain
\begin{align*}
&\quad[[\Phi(X^+_{n-1,0}),\Phi(X^+_{n-2,0})],[\Phi(X^+_{n-1,0}),\Phi(X^+_{n,1})]]\\
&=-[S^{(1)}_{n-2,n},[S^{(1)}_{n-1,n},-S^{(2)}_{n-1,-n}-(n-1)\hbar S^{(1)}_{n-1,-n}+\hbar\sum_{a\in I_{n-1}}S^{(1)}_{n-1,a}S^{(1)}_{a,-n}]]\\
&=-[S^{(1)}_{n-2,n},\hbar\sum_{a\in I_{n-1}}S^{(1)}_{n-1,a}S^{(1)}_{a,-n+1}-\hbar S^{(1)}_{n-1,n}S^{(1)}_{n-1,-n}]\\
&=\hbar S^{(1)}_{n-1,n}S^{(1)}_{n-2,-n+1}+\hbar S^{(1)}_{n-1,-n+2}S^{(1)}_{-n,-n+1}+\hbar S^{(1)}_{n-1,n}S^{(1)}_{n-1,-n+2}\\
&=\hbar S^{(1)}_{n-2,-n+1}S^{(1)}_{n-1,n}.
\end{align*}

\section*{Data Availability}
The authors confirm that the data supporting the findings of this study are available within the article and its supplementary materials.
\section*{Declarations}
\subsection*{Funding}
This work was supported by JSPS Overseas Research Fellowships, Grant Number JP2360303. 
\subsection*{Conflicts of interests/Competing interests}
The authors have no competing interests to declare that are relevant to the content of this article.


\begin{thebibliography}{10}

\bibitem{BR1}
S.~Belliard and V.~Regelskis.
\newblock Drinfeld {J} presentation of twisted {Y}angians.
\newblock {\em SIGMA Symmetry Integrability Geom. Methods Appl.}, 13:Paper No.
  011, 35, 2017.

\bibitem{Bro}
J.~Brown.
\newblock Twisted {Y}angians and finite {$W$}-algebras.
\newblock {\em Transform. Groups}, 14(1):87--114, 2009.

\bibitem{BK}
J.~Brundan and A.~Kleshchev.
\newblock Shifted {Y}angians and finite {$W$}-algebras.
\newblock {\em Adv. Math.}, 200(1):136--195, 2006.

\bibitem{DKV}
A.~De~Sole, V.~G. Kac, and D.~Valeri.
\newblock A {L}ax type operator for quantum finite {$W$}-algebras.
\newblock {\em Selecta Math. (N.S.)}, 24(5):4617--4657, 2018.

\bibitem{D1}
V.~G. Drinfeld.
\newblock Hopf algebras and the quantum {Y}ang-{B}axter equation.
\newblock {\em Dokl. Akad. Nauk SSSR}, 283(5):1060--1064, 1985.

\bibitem{D2}
V.~G. Drinfeld.
\newblock A new realization of {Y}angians and of quantum affine algebras.
\newblock {\em Dokl. Akad. Nauk SSSR}, 296(1):13--17, 1987.

\bibitem{Gu1}
N.~Guay.
\newblock Affine {Y}angians and deformed double current algebras in type {A}.
\newblock {\em Adv. Math.}, 211(2):436--484,
  https://doi.org/10.1016/j.aim.2006.08.007, 2007.

\bibitem{GNW}
N.~Guay, H.~Nakajima, and C.~Wendlandt.
\newblock Coproduct for {Y}angians of affine {K}ac-{M}oody algebras.
\newblock {\em Adv. Math.}, 338:865--911, 2018.

\bibitem{LPTTW}
K. Lu, W.~Wang, and W.~Zhang.
\newblock Shifted twisted Yangians and finite $W$-algebras of classical type.
\newblock arXiv:2405.03316,.

\bibitem{LWZ}
K. Lu, W.~Wang, and W.~Zhang.
\newblock Affine $i$quantum groups and twisted {Y}angians in {D}rinfeld
  presentations.
\newblock arXiv:2406.050607,.

\bibitem{LWZ0}
K. Lu, W.~Wang, and W.~Zhang.
\newblock A {D}rinfeld type presentation of twisted {Y}angian.
\newblock arXiv:2308.12254,.

\bibitem{LWZ1}
K. Lu and W.~Zhang.
\newblock A {D}rinfeld type presentation of twisted {Y}angians of quasi-split
  type.
\newblock arXiv:2308.06981,.

\bibitem{MNO}
A.~Molev, M.~Nazarov, and G.~Olshanski\u{\i}.
\newblock Yangians and classical {L}ie algebras.
\newblock {\em Uspekhi Mat. Nauk}, 51(2(308)):27--104, 1996.

\bibitem{O}
G.~I. Olshanski\u{\i}.
\newblock Twisted {Y}angians and infinite-dimensional classical {L}ie algebras.
\newblock In {\em Quantum groups ({L}eningrad, 1990)}, volume 1510 of {\em
  Lecture Notes in Math.}, pages 104--119. Springer, Berlin, 1992.

\bibitem{RS}
E.~Ragoucy and P.~Sorba.
\newblock Yangian realisations from finite {$W$}-algebras.
\newblock {\em Comm. Math. Phys.}, 203(3):551--572, 1999.

\bibitem{U77}
M.~Ueda.
\newblock Twisted {A}ffine {Y}angians and {R}ectangular ${W}$-algebras of type
  ${D}$.
\newblock arXiv;2107.09999, to appear in {\it Algebras Represent. Theory. }.

\end{thebibliography}
\end{document}